\newtheorem{theorem}{Theorem}[section]
\newtheorem{corollary}[theorem]{Corollary}
\newtheorem{lemma}[theorem]{Lemma}
\theoremstyle{definition}
\newtheorem{definition}[theorem]{Definition}
\newtheorem{remark}[theorem]{Remark}
\newtheorem{example}[theorem]{Example}
\numberwithin{equation}{section}
\begin{document}

\thispagestyle{empty}

\title[Inner Riesz balayage and applications]{A theory of inner Riesz balayage and its applications}

\author{Natalia Zorii}
\address{\footnotesize{Institute of Mathematics\\ Academy of Sciences
of Ukraine\\
Teresh\-chen\-kivska~3\\
01601 Kyiv, Ukraine}}
\email{\footnotesize{natalia.zorii@gmail.com}}


\date{}

\begin{abstract} We establish the theory of balayage for the Riesz kernel $|x-y|^{\alpha-n}$, $\alpha\in(0,2]$, on $\mathbb R^n$, $n\geqslant3$, alternative to that suggested in the book by Landkof. A need for that is caused by the fact that the balayage in that book is defined by means of the integral representation, which, however, so far is not completely justified. Our alternative approach is mainly based on Cartan's ideas concerning inner balayage, formulated by him for the Newtonian kernel. Applying the theory of inner Riesz balayage thereby developed, we obtain a number of criteria for the existence of an inner equilibrium measure $\gamma_A$ for $A\subset\mathbb R^n$ arbitrary, in particular given in terms of the total mass of the inner swept measure $\mu^A$ with $\mu$ suitably chosen. For example, $\gamma_A$ exists if and only if $\varepsilon^{A^*}\ne\varepsilon$, where $\varepsilon$ is a Dirac measure at $x=0$ and $A^*$ the inverse of $A$ relative to the sphere $|x|=1$, which leads to a Wiener type criterion of inner $\alpha$-ir\-reg\-ularity. The results obtained are illustrated by examples.
\end{abstract}

\subjclass[2010]{Primary 31C15}

\keywords{Inner Riesz balayage, inner Riesz equilibrium measure, Wiener type criterion of inner $\alpha$-ir\-reg\-ularity,
$\mu$-ad\-equ\-ate family of Radon measures}

\maketitle

\section{Introduction}\label{sec:intr}
A major goal of our study is to establish the theory of balayage for the Riesz kernel $|x-y|^{\alpha-n}$, $\alpha\in(0,2]$, on $\mathbb R^n$, $n\geqslant3$, alternative to that suggested in \cite{L} (see Chapter~IV, Section~6, n$^\circ$~25 as well as Chapter~V, Section~1, n$^\circ$~2). A need for that is caused by the fact that the balayage $\mu^A$ of a positive Radon measure $\mu$ on $\mathbb R^n$ to a Borel set $A\subset\mathbb R^n$ is defined in \cite{L} by means of the integral representation
\begin{equation}\label{L-repr}\mu^A=\int\varepsilon_y^A\,d\mu(y),\end{equation}
where $\varepsilon_y$ is the unit Dirac measure at $y\in\mathbb R^n$.
However, this requires that the family $(\varepsilon_y^A)_{y\in\mathbb R^n}$ be $\mu$\emph{-ad\-eq\-uate\/} in the sense of \cite[Chapter~V, Section~3, n$^\circ$~1, Definition~1]{B2}. As pointed out in \cite[Chapter~V, Section~3, n$^\circ$~1, Remark]{B2}, it is not enough to verify that for every $f\in C_0(\mathbb R^n)$, the function $y\mapsto\int f\,d\varepsilon_y^A$ is $\mu$-meas\-urable on $\mathbb R^n$ (as it is done in \cite[p.~214, footnote~12]{L}); see also counterexamples (without $\mu$-adeq\-uacy) in \cite[Chapter~V, Section~3, Exercises~1, 2]{B2}. Here $C_0(\mathbb R^n)$ is the class of all finitely continuous functions on $\mathbb R^n$ with compact support.

For $A$ closed and $\mu$ carried by $A^c:=\mathbb R^n\setminus A$, the $\mu$-adequacy of the family $(\varepsilon_y^A)_{y\in A^c}$, and hence the validity of the integral representation (\ref{L-repr}), has been proven in recent work \cite[Lemma~3.16, Theorem~3.17]{FZ}. Compare with our Theorem~\ref{th-int-rep} strengthening \cite[Theorem~3.17]{FZ} to $A$ arbitrary and $\mu$ carried by $\overline{A}^{\,c}$, the concept of balayage being now understood in the sense described in Section~\ref{sec-bal} below. However, the question whether the integral representation (\ref{L-repr}) holds for \emph{any\/} $\mu$ is still open.

Leaving aside of the main stream of this study the question on the validity of the integral representation (\ref{L-repr}), we establish instead an alternative theory of Riesz balayage by generalizing H.~Cartan's ideas concerning inner balayage, formulated by him for the Newtonian kernel $|x-y|^{2-n}$ \cite{Ca2}.
To explain briefly the results thereby obtained, we need the following notions and notation.

Let $\mathfrak M$ denote the linear space of all (signed) Radon measures $\nu$ on $\mathbb R^n$, equipped with the \emph{vague\/} topology, i.e.\ the topology of pointwise convergence on the class $C_0(\mathbb R^n)$. Given $\nu,\nu_1\in\mathfrak M$, we define the \emph{potential\/} and the \emph{mutual energy\/} by
\begin{align*}U^\nu(x)&:=\int|x-y|^{\alpha-n}\,d\nu(y),\quad x\in\mathbb R^n,\\
E(\nu,\nu_1)&:=\int|x-y|^{\alpha-n}\,d(\nu\otimes\nu_1)(x,y),
\end{align*}
respectively (provided, of course, that the corresponding value on the right-hand side is well defined as a finite number or $\pm\infty$). For $\nu=\nu_1$, $E(\nu,\nu_1)$ defines the \emph{energy\/} $E(\nu):=E(\nu,\nu)$ of $\nu$. All $\nu\in\mathfrak M$ with finite $E(\nu)$ form a pre-Hil\-bert space $\mathcal E$ with the inner product $(\nu,\nu_1):=E(\nu,\nu_1)$ and the norm $\|\nu\|:=\sqrt{E(\nu)}$. The topology on $\mathcal E$ defined by $\|\cdot\|$ is said to be \emph{strong}.

For an arbitrary set $Q\subset\mathbb R^n$, we denote by $\mathfrak M^+_Q$ the cone of all positive $\nu\in\mathfrak M$ \emph{carried by\/} $Q$, which means that $Q$ is $\nu$-meas\-ur\-able and $Q^c$ is $\nu$-neg\-lig\-ible. Write $\mathcal E^+_Q:=\mathcal E\cap\mathfrak M^+_Q$, $\mathfrak M^+:=\mathfrak M^+_{\mathbb R^n}$, and $\mathcal E^+:=\mathcal E^+_{\mathbb R^n}$.

To establish the theory of inner Riesz balayage, we first consider $\mu$ with finite energy, and we define the inner balayage $\mu^A\in\mathcal E^+$ of $\mu\in\mathcal E^+$ to $A\subset\mathbb R^n$ arbitrary as the strong and vague limit of $\mu^K$ as $K$ increases along the upper directed family $\mathfrak C_A$ of all compact subsets of $A$ (see Theorem~\ref{th-f-cont}), where $\mu^K$ is the orthogonal projection of $\mu$ in the pre-Hil\-bert space $\mathcal E$ onto the strongly complete convex cone $\mathcal E^+_K$ (cf.\ Theorem~\ref{th-bala-f}). Alternatively, this $\mu^A$ is, in fact, the orthogonal projection of $\mu$ onto the strong cluster set of $\mathcal E^+_A$ (see Theorem~\ref{th-f-cont}).

Having observed that this $\mu^A$ can also be determined uniquely by means of
\[E(\mu,\lambda^A)=E(\mu^A,\lambda)\text{ \ for all\ }\lambda\in\mathcal E^+,\]
we now define the \emph{inner balayage\/} $\mu^A\in\mathfrak M^+$ of $\mu\in\mathfrak M^+$ to $A$ as a (unique) measure satisfying this symmetry identity.\footnote{The term `\emph{inner\/} balayage' is justified by showing that $\mu^K\to\mu^A$ vaguely as $K$ increases along $\mathfrak C_A$, see Theorem~\ref{incr}.}  This $\mu^A$ exists, and it can equivalently be determined by either of the following two limit relations (see Theorem~\ref{th-bala-2} and its proof):
\[\mu_k^A\to\mu^A\text{ \ vaguely},\quad U^{\mu_k^A}\uparrow U^{\mu^A}\text{ \ pointwise on\ }\mathbb R^n,\]
where $(\mu_k)\subset\mathcal E^+$ is any given sequence (net) such that $U^{\mu_k}\uparrow U^\mu$ pointwise on $\mathbb R^n$.

Note that although $U^{\mu^A}=U^\mu$ n.e.\ on $A$, that is, everywhere on $A$ except for a set of zero inner capacity $c_\alpha(\cdot)$ (see Theorem~\ref{th-bala-2}), this property no longer characterizes $\mu^A$ uniquely (as it would do for $A$ closed and $\mu\in\mathcal E^+$, cf.\ Theorem~\ref{th-bala-f}). This uniqueness nevertheless does hold whenever $A$ is closed and $\mu\in\mathfrak M^+$ is carried by $A^c$ (see Corollary~\ref{unicity}).

Further, we apply the concept of inner Riesz balayage thus introduced to the problem on the existence of an \emph{inner equilibrium measure\/} $\gamma_A$ for $A$ arbitrary. This $\gamma_A$ can be defined e.g.\ as the vague limit of $\gamma_K$ as $K$ increases along $\mathfrak C_A$, while the equilibrium measure $\gamma_K$ on $K$ compact is defined as usual (see e.g.\ \cite[Chapter~II, Section~2, n$^\circ$~7]{L}).

For $A$ Borel, necessary and sufficient conditions for the existence of $\gamma_A$ have been provided in \cite[Theorem~5.1]{L}. However, \cite[Theorem~5.1]{L} has not been completely justified, because the proof of its necessity part is based on the concept of balayage, introduced in \cite[Chapter~IV, Section~6, n$^\circ$~25]{L} with the aid of the integral representation (\ref{L-repr}).

By use of our concept of inner Riesz balayage, we fix that gap in \cite[Proof of Theorem~5.1]{L} and, moreover, we strengthen \cite[Theorem~5.1]{L} to $A$ arbitrary (see Theorem~\ref{l-eq-ex}). Having observed that the existence of $\gamma_A$ does not necessarily imply the finiteness for $c_\alpha(A)$, we illustrate this by means of Example~\ref{ex-thin}.

An \emph{inner\/} $\alpha$\emph{-irregular\/} point $y$ for $A$ is defined by the relations $y\in\overline{A}$ and $\varepsilon^A_y\ne\varepsilon_y$. We show that $\varepsilon^A_y\ne\varepsilon_y$ is equivalent to the existence of an inner equilibrium measure $\gamma_{A^*}$ for the inverse $A^*$ of $A\setminus\{y\}$ relative to the sphere $S(y,1)$; and then $\varepsilon^A_y$ is, actually, the Kelvin transform of $\gamma_{A^*}$ (Theorem~\ref{nands}). Combining Theorem~\ref{nands} with the above-men\-tion\-ed Theorem~\ref{l-eq-ex} results in a Wiener type criterion of inner $\alpha$-ir\-reg\-ul\-arity (Theorem~\ref{Wiener}).

Other necessary and sufficient conditions for the existence of $\gamma_A$, now given in terms of the total mass $\mu^A(\mathbb R^n)$ with $\mu$ suitably chosen, are provided by Theorems~\ref{bal-mass-th} and \ref{bal-mass-th'}. It is shown that $\gamma_A$ exists whenever there is $\mu\in\mathfrak M^+_{\,\overline{A}^{\,c}}$ with 
\[\mu^A(\mathbb R^n)<\mu(\mathbb R^n),\] 
while for closed $A$, this can be reversed (see Section~\ref{fur-cr} for further details). The proofs of Theorems~\ref{bal-mass-th} and \ref{bal-mass-th'} are based on the above-men\-tion\-ed Theorem~\ref{nands} as well as on Theorem~\ref{th-int-rep}, establishing the integral representation (\ref{L-repr}) for $A$ arbitrary and $\mu$ carried by $\overline{A}^{\,c}$ and thereby strengthening \cite[Theorem~3.17]{FZ}. Theorems~\ref{bal-mass-th} and \ref{bal-mass-th'} are illustrated by Example~\ref{ex2}.

It is worth mentioning that the concept of inner Riesz balayage, defined in our study, differs from the concept of balayage by Brelot \cite{Br} as well as from that by Bliedtner and Hansen \cite{BH}.
The last two are relevant to the concept of \emph{outer\/} balayage (cf.\ \cite{Ca2}), which can be seen e.g.\ by comparing \cite[Theorem~IX.10]{Br} and \cite[Chapter~VI, Proposition~2.2]{BH} with our Theorem~\ref{Wiener} and Corollary~\ref{cor-decr}, respectively.

\section{Preliminaries}\label{sec:prel}
This paper deals with the Riesz kernel $|x-y|^{\alpha-n}$ of order $0<\alpha\leqslant2$ on $\mathbb R^n$, $n\geqslant3$.
In what follows we shall tacitly use the notions and notation introduced in Section~\ref{sec:intr}.

For $Q\subset\mathbb R^n$, let $\partial Q$ and $\overline{Q}$ denote the boundary and the closure of $Q$ in the Euclidean topology on $\mathbb R^n$. Write $B(y,r):=\{x\in\mathbb R^n: \ |x-y|<r\}$, where $r>0$, and let $S(y,r)$ and $\overline{B}(y,r)$ stand for the boundary and the closure of $B(y,r)$ in $\mathbb R^n$.

In this section we have compiled some basic facts in Riesz potential theory, often used below.
When speaking of a measure $\mu\in\mathfrak M^+$, we always tacitly assume that $U^\mu$ is not identically infinite, or equivalently \cite[Chapter~I, Section~3, n$^\circ$~7]{L}
\begin{equation}\label{1.3.10}\int_{|y|>1}\,\frac{d\mu(y)}{|y|^{n-\alpha}}<\infty.\end{equation}
Then $U^\mu$, $\mu\in\mathfrak M^+$, is $\alpha$-\emph{sup\-er\-har\-monic\/} (hence, lower semicontinuous (l.s.c.)) on $\mathbb R^n$ \cite[Chapter~I, Section~6, n$^\circ$~20]{L}, which is crucial to Theorems~\ref{3.9} and \ref{dom} below.

\begin{theorem}\label{3.9}If an upper directed family\/ $(U^{\mu_t})_{t\in T}$, where\/ $\mu_t\in\mathfrak M^+$ for all\/ $t\in T$, is majorized by\/ $U^\mu$ with some\/ $\mu\in\mathfrak M^+$, then there exists\/ $\nu\in\mathfrak M^+$ such that\/ $U^{\mu_t}\uparrow U^\nu$ pointwise on\/ $\mathbb R^n$ and\/ $\mu_t\to\nu$ vaguely\/ {\rm(}as\/ $t$ increases along\/ $T$\rm{)}.
\end{theorem}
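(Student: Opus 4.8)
The plan is to first manufacture the measure $\nu$ out of the pointwise limit of the potentials, treating that limit as an $\alpha$-superharmonic function, and only afterwards to deduce the vague convergence of the measures from the $L^1_{\mathrm{loc}}$-convergence of their potentials. First I would set $u:=\sup_{t\in T}U^{\mu_t}$. Because the family $(U^{\mu_t})$ is upper directed, $U^{\mu_t}\uparrow u$ pointwise on $\mathbb R^n$; and since $u\leqslant U^\mu\not\equiv+\infty$, the limit $u$ is finite on a large set. I would then invoke the fundamental fact from the theory of $\alpha$-superharmonic functions (cf.\ \cite{L}, Ch.~I) that the pointwise limit of an upper directed family of nonnegative $\alpha$-superharmonic functions, when majorized by an $\alpha$-superharmonic function, is again $\alpha$-superharmonic. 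Thus $u$ is a nonnegative $\alpha$-superharmonic function with $u\leqslant U^\mu$. By the Riesz decomposition, $u=U^\nu+h$, where $\nu\in\mathfrak M^+$ is the Riesz measure of $u$ and $h\geqslant0$ is its greatest $\alpha$-harmonic minorant. Since $0\leqslant h\leqslant u\leqslant U^\mu$ and a nonnegative $\alpha$-harmonic function on $\mathbb R^n$ dominated by a potential must vanish identically, $h\equiv0$. Hence $u=U^\nu$, which already yields $U^{\mu_t}\uparrow U^\nu$ as claimed.

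Next I would prepare the vague convergence by establishing the uniform local mass bound $\sup_{t}\mu_t(K)<\infty$ for every compact $K$. Fix a ball $B\supset K$ and let $\sigma$ be the restriction of Lebesgue measure to $B$; its potential $U^\sigma$ is continuous and bounded below by some constant $c>0$ on $K$. By Tonelli and symmetry of the kernel,
\[c\,\mu_t(K)\leqslant\int U^\sigma\,d\mu_t=\int U^{\mu_t}\,d\sigma\leqslant\int_B U^\mu(x)\,dx<\infty,\]
the finiteness of the last integral being the local integrability of the $\alpha$-superharmonic function $U^\mu$. This bound makes $(\mu_t)$ vaguely relatively compact.

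It then remains to show that every vague cluster point of $(\mu_t)$ equals $\nu$, for uniqueness of the cluster point yields $\mu_t\to\nu$ vaguely. Since $U^{\mu_t}\uparrow U^\nu$ pointwise with all potentials dominated by $U^\mu\in L^1_{\mathrm{loc}}$, monotone convergence gives $U^{\mu_t}\to U^\nu$ in $L^1_{\mathrm{loc}}(\mathbb R^n)$. As each $\mu_t$ is, up to a normalizing constant, the image of $U^{\mu_t}$ under $(-\Delta)^{\alpha/2}$ in the sense of distributions, this $L^1_{\mathrm{loc}}$-convergence forces $\mu_t\to\nu$ against every $\varphi\in C_0^\infty(\mathbb R^n)$. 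Consequently any vague cluster point agrees with $\nu$ on $C_0^\infty(\mathbb R^n)$, hence coincides with $\nu$, completing the proof. (The principle of descent already supplies the one-sided bound $U^{\nu'}\leqslant U^\nu$ for a cluster point $\nu'$, but not the reverse, which is why I route the identification through the distributional image rather than through potentials directly.)

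The main obstacle is the vague-convergence half, and within it the passage from distributional to vague convergence together with the local mass bound that licenses it; upstream, the delicate points are the justification that the directed limit $u$ is $\alpha$-superharmonic and that its $\alpha$-harmonic minorant vanishes. For $\alpha<2$ these rest on the nonlocal potential theory of the Riesz kernel rather than on classical harmonicity, so I would take care to cite the $\alpha$-superharmonic versions of the convergence and Riesz-decomposition theorems rather than their Newtonian analogues.
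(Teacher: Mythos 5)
Your argument is sound in outline, but it is worth saying up front that the paper does not actually prove this statement from scratch: it cites Landkof's Theorem~3.9 for countable $T$ and indicates that the directed (possibly uncountable) case follows with the help of Doob's theorem on upper envelopes and Bourbaki's monotone convergence theorem for upper directed families of l.s.c.\ functions. What you have written is essentially a reconstruction of that classical proof: superharmonicity of the directed limit, Riesz decomposition with vanishing $\alpha$-harmonic part, a local mass bound giving vague relative compactness, and identification of the cluster point. The two places where the uncountability of $T$ bites --- l.s.c.\ of the upper envelope and the passage of the super-mean-value inequality to the supremum --- are exactly where the paper invokes Bourbaki and Doob, and you correctly flag that you need the directed-family versions of these facts rather than the sequential ones. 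Your mass bound via $\int U^\sigma\,d\mu_t=\int U^{\mu_t}\,d\sigma\leqslant\int_B U^\mu\,dx$ is correct and standard.

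The one step that does not hold as written is the final identification for $\alpha<2$. You pass from $U^{\mu_t}\to U^{\nu}$ in $L^1_{\mathrm{loc}}$ to $\langle\mu_t,\varphi\rangle\to\langle\nu,\varphi\rangle$ via $\langle\mu_t,\varphi\rangle=c\int U^{\mu_t}\,(-\Delta)^{\alpha/2}\varphi\,dx$. For $\alpha=2$ this is fine because $-\Delta\varphi$ has compact support, but for $\alpha<2$ the function $(-\Delta)^{\alpha/2}\varphi$ is \emph{not} compactly supported (it decays only like $|x|^{-n-\alpha}$), so $L^1_{\mathrm{loc}}$-convergence of the potentials is not enough to pass to the limit in this pairing. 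The repair is available from what you already have: $0\leqslant U^{\mu_t}\leqslant U^\mu$ everywhere, and $\int U^\mu(x)\,|x|^{-n-\alpha}\,dx<\infty$ over $|x|>1$ by Fubini together with condition (\ref{1.3.10}), so dominated convergence with dominating function $U^\mu\,\bigl|(-\Delta)^{\alpha/2}\varphi\bigr|$ applies. Alternatively, you can avoid the fractional Laplacian altogether and identify the cluster point potential-theoretically (e.g.\ by testing against the mollified kernels $m^{(r)}_x$ as the paper does elsewhere, or by combining the principle of descent $U^{\nu'}\leqslant U^{\nu}$ with the reverse inequality obtained from $U^{\mu_t}\leqslant U^{\nu'}$ via vague lower semicontinuity on subnets). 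Either way the conclusion stands; only the justification of that one limit interchange needs to be strengthened.
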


For $T$ countable, Theorem~\ref{3.9} is \cite[Theorem~3.9]{L}. The proof of \cite[Theorem~3.9]{L} can be generalized to $T$ uncountable with the aid of \cite[Appendix~VIII, Theorem~2]{Doob} and \cite[Chapter~IV, Section~1, Theorem~1]{B2}.

The Riesz kernel is \emph{strictly positive definite}, that is, $E(\nu)\geqslant0$ for every $\nu\in\mathfrak M$ (whenever $E(\nu)$ is well defined) and $E(\nu)=0$ only for $\nu=0$.
Furthermore, it is \emph{perfect\/} \cite{F1} in the sense that every strong Cauchy sequence (net) in $\mathcal E^+$ converges strongly to any of its vague cluster points, and the strong topology on $\mathcal E^+$ is finer (stronger) than the vague topology on $\mathcal E^+$. Since any strongly bounded part of $\mathcal E^+$ is vaguely bounded \cite[Lemma~2.5.1]{F1}, the cone $\mathcal E^+$ is strongly complete. Hence, so is $\mathcal E^+_F$ for $F\subset\mathbb R^n$ closed, the cone $\mathfrak M^+_F$ being vaguely closed.

For any $Q\subset\mathbb R^n$, the \emph{inner\/} $\alpha$-Riesz \emph{capacity\/} $c_\alpha(Q)$ is given by\footnote{The infimum over the empty set is taken $+\infty$. We also put $1/(+\infty)=0$ and $1/0=+\infty$.}
\[c_\alpha(Q):=1\bigl/\inf\,E(\mu),\]
where $\mu$ ranges over all $\mu\in\mathcal E^+_Q$ with $\mu(\mathbb R^n)=1$. Then
\begin{equation}\label{cap-incr}c_\alpha(K)\uparrow c_\alpha(Q)\text{ \ as\ }K\uparrow Q,\end{equation}
where the abbreviation `$K\uparrow Q$' means that $K$ increases along $\mathfrak C_Q$.

\begin{lemma}[{\rm see \cite[Lemma~2.3.1]{F1}}]\label{2.3.1}For any\/ $Q\subset\mathbb R^n$, $c_\alpha(Q)=0\iff\mathcal E^+_Q=\{0\}$.\end{lemma}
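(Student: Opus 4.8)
The plan is to verify the two implications separately, working directly from the definition $c_\alpha(Q)=1\bigl/\inf E(\mu)$, where $\mu$ ranges over all $\mu\in\mathcal E^+_Q$ with $\mu(\mathbb R^n)=1$, together with the stated conventions that the infimum over the empty set is $+\infty$ and that $1/(+\infty)=0$. For the implication $\mathcal E^+_Q=\{0\}\Rightarrow c_\alpha(Q)=0$, I would simply note that if the cone $\mathcal E^+_Q$ reduces to the zero measure, then it contains no measure of unit total mass; hence the infimum defining $c_\alpha(Q)$ is taken over the empty set and equals $+\infty$, and the convention $1/(+\infty)=0$ gives $c_\alpha(Q)=0$.

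For the converse I would argue by contraposition: assuming $\mathcal E^+_Q\ne\{0\}$, I would produce a unit-mass measure in $\mathcal E^+_Q$ of finite energy, which forces $\inf E(\mu)<\infty$ and hence $c_\alpha(Q)>0$. Pick any $\nu\in\mathcal E^+_Q$ with $\nu\ne0$. The only delicate point is that a measure of finite energy need not have finite total mass, so one cannot normalize $\nu$ directly. I would circumvent this by restricting to a compact set: since $\nu$ is a nonzero positive Radon measure, inner regularity yields a compact $K$ with $0<\nu(K)<\infty$, and the restriction $\nu|_K$ lies in $\mathfrak M^+_Q$ (being carried by $Q\cap K\subseteq Q$) and has finite mass.

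The key step is then to check that $\nu|_K$ still has finite energy. This follows from the positivity of the Riesz kernel: decomposing $\nu=\nu|_K+\nu|_{K^c}$ yields $E(\nu)=E(\nu|_K)+2E(\nu|_K,\nu|_{K^c})+E(\nu|_{K^c})$ with all three terms nonnegative, whence $E(\nu|_K)\le E(\nu)<\infty$. Normalizing, $\mu:=\nu|_K/\nu(K)$ is a unit-mass measure in $\mathcal E^+_Q$ with $E(\mu)=E(\nu|_K)/\nu(K)^2<\infty$, so $\inf E(\mu)\le E(\mu)<\infty$ and $c_\alpha(Q)=1\bigl/\inf E(\mu)>0$. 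The whole argument is elementary; the only place demanding care is the passage from $\nu$ to a finite-mass measure of finite energy, and that is handled cleanly by restriction to a compact set together with the strict positive definiteness of the kernel recorded above.
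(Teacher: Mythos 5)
Your argument is correct and complete: the forward implication is immediate from the conventions on empty infima, and for the converse the restriction of a nonzero $\nu\in\mathcal E^+_Q$ to a suitable compact set produces, after normalization, a unit-mass competitor of finite energy (the inequality $E(\nu|_K)\leqslant E(\nu)$ holding because the Riesz kernel is a pointwise positive function — that, rather than strict positive definiteness, is the property you are actually using in the three-term decomposition). The paper itself offers no proof, citing Fuglede's Lemma~2.3.1 instead, and your reduction to a compact set is essentially the standard argument behind that reference.
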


A measure $\mu\in\mathfrak M^+$ is said to be \emph{bounded\/} if $\mu(\mathbb R^n)<\infty$, and \emph{absolutely continuous\/} if $\mu(K)=0$ for every compact $K\subset\mathbb R^n$ with $c_\alpha(K)=0$. It follows from Lemma~\ref{2.3.1} that any $\mu\in\mathcal E^+$ is absolutely continuous; but not conversely \cite[pp.~134--135]{L}.

An assertion $\mathcal U$ is said to hold \emph{nearly everywhere\/} (\emph{n.e.}) on $Q\subset\mathbb R^n$ if the set of all $x\in Q$ for which $\mathcal U(x)$ fails has zero inner capacity.

The following assertion amounts to the countable subadditivity of inner capacity in the form stated in \cite[p.~158, Remark]{F1}.

\begin{lemma}\label{subad}Let\/ $Q\subset\mathbb R^n$ be arbitrary, and\/ $E_k\subset\mathbb R^n$, $k\in\mathbb N$, Borel. If an assertion\/ $\mathcal U$ holds n.e.\ on $Q\cap E_k$ for every\/ $k$, then\/ $\mathcal U$ holds n.e.\ on the union of all\/ $Q\cap E_k$.\end{lemma}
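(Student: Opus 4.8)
The plan is to reduce the statement to the countable subadditivity of inner capacity applied to null sets. First I would fix, for each $k\in\mathbb N$, the exceptional set
\[N_k:=\{x\in Q\cap E_k:\ \mathcal U(x)\text{ fails}\},\]
so that the hypothesis ``$\mathcal U$ holds n.e.\ on $Q\cap E_k$'' reads precisely $c_\alpha(N_k)=0$. Writing $N$ for the set of all $x\in\bigcup_k(Q\cap E_k)$ at which $\mathcal U$ fails, a one-line set-theoretic check gives $N=\bigcup_k N_k$: if $x\in N$ then $x\in Q\cap E_{k_0}$ for some $k_0$ with $\mathcal U(x)$ failing, whence $x\in N_{k_0}$, while the reverse inclusion is immediate from $N_k\subseteq Q\cap E_k\subseteq\bigcup_j(Q\cap E_j)$. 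Thus the lemma is equivalent to the assertion that a countable union of sets of zero inner capacity again has zero inner capacity.

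The key --- and only substantial --- step is then to invoke the countable subadditivity of inner capacity in the form recorded in \cite[p.~158, Remark]{F1}, which yields
\[c_\alpha(N)=c_\alpha\Bigl(\bigcup_k N_k\Bigr)\leqslant\sum_k c_\alpha(N_k)=0,\]
so that $c_\alpha(N)=0$; that is, $\mathcal U$ holds n.e.\ on $\bigcup_k(Q\cap E_k)$. The only additional ingredient is the monotonicity of $c_\alpha$ (immediate from $\mathcal E^+_A\subseteq\mathcal E^+_B$ whenever $A\subseteq B$), which already guarantees that the failure set is inner-null once it is contained in the inner-null set $\bigcup_k N_k$.

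I expect the genuine difficulty to lie entirely in that subadditivity input, and it is worth stressing why it cannot be taken for granted. Inner capacity is not an outer measure, and a set of zero inner capacity may well carry positive \emph{outer\/} capacity; consequently one cannot simply pass to the (countably subadditive) outer capacity, nor is it legitimate to restrict a finite-energy measure $\mu\in\mathcal E^+$ carried by $\bigcup_k N_k$ to the individual sets $N_k$, since these need not be $\mu$-meas\-ur\-able. This is exactly the obstruction resolved in \cite[p.~158, Remark]{F1}, where the characterization $c_\alpha(Q)=0\iff\mathcal E^+_Q=\{0\}$ of Lemma~\ref{2.3.1} furnishes the natural language for the proof. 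Granting that result, the reduction above completes the argument.
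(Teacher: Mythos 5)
Your overall route is the same as the paper's: the paper offers no argument beyond the remark that the lemma ``amounts to the countable subadditivity of inner capacity in the form stated in \cite[p.~158, Remark]{F1}'', and your reduction of the statement to a capacity inequality for the exceptional sets, followed by a citation of that same remark, is exactly that. The set-theoretic bookkeeping ($N=\bigcup_k N_k$) is correct.

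There is, however, one genuine imprecision in how you license the key inequality. You assert that the lemma ``is equivalent to the assertion that a countable union of sets of zero inner capacity again has zero inner capacity'', and you then apply $c_\alpha\bigl(\bigcup_k N_k\bigr)\leqslant\sum_k c_\alpha(N_k)$ to the sets $N_k$ as they stand. But \emph{inner} capacity is not known to be countably subadditive over arbitrary sets; the form recorded in \cite[p.~158, Remark]{F1} gives subadditivity only for countable unions of sets of the form (arbitrary set) $\cap$ (Borel, or more generally capacitable, set) --- and this restriction is precisely why the lemma carries the hypothesis that the $E_k$ be Borel. As written, your $N_k$ are arbitrary subsets (they are cut out by the failure of $\mathcal U$, about which nothing is assumed), so the cited result does not literally apply to them; your own closing paragraph, which correctly explains why one cannot restrict a finite-energy measure to the $N_k$ nor pass to outer capacity, in effect argues that the ``equivalent'' general statement is exactly what is \emph{not} available. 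The repair is one line: letting $P$ denote the set of all $x\in\mathbb R^n$ at which $\mathcal U$ fails, one has $N_k=(Q\cap P)\cap E_k$, so each $N_k$ is again of the form (arbitrary) $\cap$ (Borel) with the \emph{same} arbitrary part $Q\cap P$, and Fuglede's restricted subadditivity applies verbatim to give $c_\alpha\bigl(\bigcup_k (Q\cap P)\cap E_k\bigr)\leqslant\sum_k c_\alpha\bigl((Q\cap P)\cap E_k\bigr)=0$. With that observation inserted, your proof is complete and coincides with the paper's intended argument.
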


The property of the Riesz kernel of order $\alpha\in(0,2]$, presented in the following assertion (see \cite[Theorems~1.27, 1.29]{L}), is known as the \emph{complete maximum principle\/}; for $q=0$, it is also called the \emph{domination principle}, and for $\nu=0$, the \emph{Frostman maximum principle}.

\begin{theorem}\label{dom}If\/ $U^\mu\leqslant U^\nu+q$ holds\/ $\mu$-a.e.\ {\rm(}$\mu$-almost everywhere\/{\rm)}, where\/ $\mu\in\mathcal E^+$, $\nu\in\mathfrak M^+$, and\/ $q\in[0,\infty)$, then this inequality holds on all of\/ $\mathbb R^n$.\end{theorem}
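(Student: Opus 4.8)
The statement is the classical \emph{complete maximum principle}, and I would organise its proof around two reductions followed by a comparison argument, bearing in mind that it must be established \emph{before}, and independently of, the balayage machinery, since the latter is built on top of it. First I would reduce to the pure \emph{domination principle}, i.e.\ the case $q=0$. A positive constant is not a Riesz potential of a finite measure on $\mathbb R^n$, so to absorb $q$ I would invert about a point $y_0\notin\mathrm{supp}\,\mu$: under the Kelvin transformation $x\mapsto x^*=y_0+(x-y_0)/|x-y_0|^2$ one has $U^\mu(x)=|x-y_0|^{\alpha-n}U^{\mu^*}(x^*)$, while the constant becomes $q=|x-y_0|^{\alpha-n}\cdot q\,U^{\varepsilon_{y_0}}(x^*)$, i.e.\ $q$ times the potential of a Dirac mass at the centre of inversion. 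Dividing the hypothesis by $|x-y_0|^{\alpha-n}>0$ thus turns $U^\mu\leqslant U^\nu+q$ into the genuine domination inequality $U^{\mu^*}\leqslant U^{\nu^*+q\varepsilon_{y_0}}$, with $\mu^*\in\mathcal E^+$ (the Kelvin transform preserving finite energy) and with the $\mu$-a.e.\ hypothesis preserved, since inversion carries $\mu$-negligible sets to $\mu^*$-negligible ones.

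With the problem reduced to $\mu\in\mathcal E^+$, $\nu\in\mathfrak M^+$ and $U^\mu\leqslant U^\nu$ holding $\mu$-a.e., the core is to upgrade this to an inequality valid on all of $\mathbb R^n$. Here the key structural fact is the $\alpha$-superharmonicity recalled in this section: the function $w:=U^\nu-U^\mu$ is $\alpha$-superharmonic on the open set $\Omega:=\mathbb R^n\setminus\mathrm{supp}\,\mu$, because there $U^\mu$ is $\alpha$-harmonic whereas $U^\nu$ remains $\alpha$-superharmonic. Moreover $U^\mu(x)\to0$ as $|x|\to\infty$, so $\liminf_{|x|\to\infty}w\geqslant0$. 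The plan is then to invoke the minimum principle for $\alpha$-superharmonic functions on $\Omega$: once one knows $w\geqslant0$ off $\Omega$ and the correct behaviour at infinity, it forces $w\geqslant0$ throughout. The hypothesis $\mu\in\mathcal E^+$ is essential at this stage, as it guarantees that $\mu$ is absolutely continuous and that $U^\mu$ is finite nearly everywhere, so that the $\mu$-a.e.\ inequality is not vacuous and the exceptional set can be brought under capacity control.

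I expect the genuine difficulty to lie precisely in supplying the boundary/complement data for this minimum principle, that is, in passing from the $\mu$-a.e.\ inequality to one holding at \emph{every} point of $\mathrm{supp}\,\mu$. The exceptional set on which $w<0$ is only known to be $\mu$-negligible, and absolute continuity of $\mu$ does not by itself render it of zero inner capacity; one must therefore first establish the intermediate conclusion $U^\mu\leqslant U^\nu$ nearly everywhere and then argue that a set of zero inner capacity is \emph{removable} for the comparison. Since $w$ is merely a difference of $\alpha$-superharmonic functions, the naive minimum principle does not apply directly, so this step has to exploit the lower semicontinuity of the potentials together with the fact, recorded above, that each potential is determined off sets of zero inner capacity; Lemma~\ref{subad} and the continuity \eqref{cap-incr} of inner capacity are the natural tools for assembling the exceptional set from its compact pieces.

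The main obstacle, then, is twofold. On the one hand, for $0<\alpha<2$ the theory is nonlocal, so the relevant minimum principle feeds on the values of $w$ on the \emph{whole} complement of $\Omega$ rather than on a topological boundary, and the $\mu$-a.e.\ hypothesis must be leveraged globally on $\mathrm{supp}\,\mu$ rather than locally. On the other hand, the tempting energy route---restricting to a compact $K\subset\{U^\mu>U^\nu\}$ of positive capacity, which exists by \eqref{cap-incr} and Lemma~\ref{2.3.1}, and testing the strict inequality against a measure in $\mathcal E^+_K$---does not close on its own, because connecting such a test measure to the $\mu$-a.e.\ hypothesis effectively requires the orthogonal projection of $\mu$ onto $\mathcal E^+_K$, i.e.\ the very finite-energy balayage that is constructed later \emph{using} this principle; invoking it here would be circular. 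This is why I would insist on the $\alpha$-superharmonic comparison as the primary route, and why the careful, balayage-free treatment of the capacity-zero exceptional set is the crux of the argument; checking, in addition, that the Kelvin reduction of $q$ creates no spurious mass at the centre of inversion then completes the proof.
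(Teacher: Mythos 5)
First, a remark on the target: the paper does not prove this statement at all --- it is quoted as a known fact, with the proof delegated to \cite[Theorems~1.27, 1.29]{L}. So the only meaningful question is whether your outline would actually close, and it would not: the decisive step is missing. You correctly identify that the whole difficulty is to pass from the $\mu$-a.e.\ hypothesis to data on all of $S(\mu)$ (or at least n.e.\ there), but your proposed route --- ``first establish the intermediate conclusion $U^\mu\leqslant U^\nu$ nearly everywhere'' using lower semicontinuity, Lemma~\ref{subad} and (\ref{cap-incr}) --- cannot work, because a $\mu$-negligible subset of $S(\mu)$ can perfectly well have positive inner capacity (take $\mu$ to be an equilibrium measure of a ball and remove from its support a compact set of surface measure zero but Hausdorff dimension $n-1$). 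None of the capacity bookkeeping you invoke converts ``$\mu$-a.e.'' into ``n.e.''; that implication is only true \emph{a posteriori}, as a consequence of the theorem itself, so assuming it is circular. Since for $\alpha<2$ the minimum principle you want to apply to $w=U^\nu-U^\mu$ on $\Omega=\mathbb R^n\setminus S(\mu)$ requires $w\geqslant0$ on the \emph{entire} complement $\Omega^c$, and you have no mechanism to supply that, the core of the argument is absent. (There are also secondary issues: the Kelvin reduction of the constant $q$ presupposes a point $y_0\notin S(\mu)$, which need not exist for $\mu\in\mathcal E^+$; and $w$ is only a difference of $\alpha$-superharmonic functions, so even the minimum principle must be formulated with care.)

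The classical proofs do something genuinely different with the $\mu$-a.e.\ hypothesis: they never enlarge it to a pointwise statement on $S(\mu)$, but instead \emph{integrate against $\mu$}. In Landkof's treatment one first proves Frostman's maximum principle and then handles the exceptional set by restricting $\mu$ to compact sets on which $U^\nu$ is continuous and $U^\mu\leqslant U^\nu+q$ holds everywhere (Lusin/Egorov-type approximation), letting the removed mass tend to zero; in Cartan's energy-theoretic version one uses $\min(U^\mu,U^\nu+q)$ together with an energy identity of the form $E(\mu-\lambda)\leqslant0$, where the $\mu$-a.e.\ hypothesis enters precisely as the vanishing of $\int(U^\mu-\min(U^\mu,U^\nu+q))\,d\mu$. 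Either of these mechanisms is what your outline lacks; without one of them the proposal is a plan, not a proof.
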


Let $\mathcal M\subset\mathcal E^+$ be a strongly closed convex cone containing $\mu=0$. Since then $\mathcal M$ is strongly complete, the following theorem is a particular case of \cite[Theorem~1.12.3, Proposition~1.12.4(2)]{E2}.

\begin{theorem}\label{th-proj}For any\/ $\mu\in\mathcal E^+$, there is a unique\/ $P\mu=P_{\mathcal M}(\mu)\in\mathcal M$ such that
\[\|\mu-P\mu\|=\inf_{\nu\in\mathcal M}\,\|\mu-\nu\|=:\varrho(\mu,\mathcal M).\]
This\/ $P\mu$ is called the orthogonal projection of\/ $\mu$ in the pre-Hilbert space\/ $\mathcal E$ onto\/ $\mathcal M$, and it is characterized uniquely by the relations
\begin{align}
\label{proj1}&(\mu-P\mu,\lambda)\leqslant0\text{ \ for all\ }\lambda\in\mathcal M,\\
\label{proj2}&(\mu-P\mu,P\mu)=0.
\end{align}
\end{theorem}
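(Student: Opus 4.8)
The plan is to run the classical Hilbert-space projection argument, being careful that $\mathcal E$ is only a \emph{pre}-Hilbert space, so that existence must rest on the hypothesized strong completeness of $\mathcal M$ rather than on completeness of the ambient space. Write $d:=\varrho(\mu,\mathcal M)=\inf_{\nu\in\mathcal M}\|\mu-\nu\|$ and choose a minimizing sequence $(\nu_k)\subset\mathcal M$ with $\|\mu-\nu_k\|\to d$. The key computation is the parallelogram identity applied to $\mu-\nu_j$ and $\mu-\nu_k$, namely
\[\|\nu_j-\nu_k\|^2=2\|\mu-\nu_j\|^2+2\|\mu-\nu_k\|^2-4\bigl\|\mu-\tfrac{\nu_j+\nu_k}{2}\bigr\|^2.\]
Since $\mathcal M$ is convex, $(\nu_j+\nu_k)/2\in\mathcal M$, whence $\|\mu-(\nu_j+\nu_k)/2\|\geqslant d$; letting $j,k\to\infty$ the right-hand side tends to $4d^2-4d^2=0$, so $(\nu_k)$ is strong Cauchy. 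By the assumed strong completeness of $\mathcal M$ it converges strongly to some $P\mu\in\mathcal M$, and continuity of the norm gives $\|\mu-P\mu\|=d$. Uniqueness of the minimizer follows from the same identity: if $P_1,P_2\in\mathcal M$ both realize $d$, then $(P_1+P_2)/2\in\mathcal M$ yields $\|P_1-P_2\|^2\leqslant2d^2+2d^2-4d^2=0$, and since the Riesz kernel is strictly positive definite, $\|P_1-P_2\|=0$ forces $P_1=P_2$.

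For the variational characterization I would first derive (\ref{proj1}) and (\ref{proj2}) from minimality and then verify that they are sufficient. Fix $\lambda\in\mathcal M$; by convexity $P\mu+t(\lambda-P\mu)\in\mathcal M$ for $t\in(0,1)$, so expanding $\|\mu-P\mu-t(\lambda-P\mu)\|^2\geqslant\|\mu-P\mu\|^2$, dividing by $t$ and letting $t\downarrow0$ gives $(\mu-P\mu,\lambda-P\mu)\leqslant0$. Here the \emph{cone} structure supplies the extra information: taking $\lambda=0\in\mathcal M$ and $\lambda=2P\mu\in\mathcal M$ in turn yields $(\mu-P\mu,P\mu)\geqslant0$ and $(\mu-P\mu,P\mu)\leqslant0$, hence (\ref{proj2}); feeding this back into $(\mu-P\mu,\lambda-P\mu)\leqslant0$ gives (\ref{proj1}). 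Conversely, if $P\mu\in\mathcal M$ satisfies (\ref{proj1}) and (\ref{proj2}), then for every $\nu\in\mathcal M$,
\[\|\mu-\nu\|^2=\|\mu-P\mu\|^2-2(\mu-P\mu,\nu-P\mu)+\|\nu-P\mu\|^2\geqslant\|\mu-P\mu\|^2,\]
because $(\mu-P\mu,\nu-P\mu)=(\mu-P\mu,\nu)-(\mu-P\mu,P\mu)\leqslant0$ by (\ref{proj1}) and (\ref{proj2}); reading off the equality case shows moreover that any minimizer coincides with $P\mu$, so (\ref{proj1})--(\ref{proj2}) characterize it uniquely.

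The only genuine obstacle is that $\mathcal E$ need not be complete, so the textbook fact that closed convex subsets of a Hilbert space are proximinal is unavailable; everything hinges on converting the minimizing sequence into a strong Cauchy sequence and invoking the strong completeness of $\mathcal M$, which in the present setting is guaranteed by the perfectness of the Riesz kernel recorded in Section~\ref{sec:prel} (strongly bounded parts of $\mathcal E^+$ being vaguely bounded, and strong Cauchy nets converging to their vague cluster points). Apart from this point, the argument is the routine projection proof, with the cone hypothesis furnishing the sharpened orthogonality (\ref{proj2}) beyond the generic variational inequality (\ref{proj1}).
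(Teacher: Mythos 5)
Your proof is correct; the paper itself supplies no argument for this theorem, simply observing that $\mathcal M$ is strongly complete (being a strongly closed convex cone in the strongly complete cone $\mathcal E^+$) and citing Edwards for the projection theorem. Your writeup is exactly the standard argument that citation encapsulates — parallelogram identity plus strong completeness for existence, and the cone trick with $\lambda=0$ and $\lambda=2P\mu$ to sharpen the variational inequality into (\ref{proj1})--(\ref{proj2}) — correctly adapted to the pre-Hilbert setting, so there is nothing to add.
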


\section{Inner Riesz balayage}\label{sec-bal}

Unless explicitly stated otherwise, in what follows we assume that $A$ is an \emph{arbitrary\/} proper subset of $\mathbb R^n$ with $c_\alpha(A)>0$.

The notion of inner Riesz balayage of $\mu\in\mathfrak M^+$ to $A$ will be defined in three steps, presented respectively in Sections~\ref{st1}, \ref{st2}, and \ref{st3}.

\subsection{Step 1: $\mu\in\mathcal E^+$ and $A$ closed}\label{st1} Assume first that $\mu\in\mathfrak M^+$ has finite energy and $A$ is closed in $\mathbb R^n$.

\begin{theorem}\label{th-bala-f}
For\/ $\mu\in\mathcal E^+$ and\/ $A$ closed, there exists\/ $\mu^A\in\mathcal E^+_A$ such that
\begin{align}
\label{eq-bala-f1}U^{\mu^A}&=U^\mu\text{ \ n.e.\ on\ }A,\\
U^{\mu^A}&\leqslant U^\mu\text{ \ on\ }\mathbb R^n.\label{eq-bala-f2}
\end{align}
This\/ $\mu^A$ is actually the orthogonal projection of\/ $\mu$ in the pre-Hilbert space\/ $\mathcal E$ onto the convex cone\/ $\mathcal E^+_A$,
and it is determined uniquely within\/ $\mathcal E^+_A$ by {\rm(\ref{eq-bala-f1})}.
\end{theorem}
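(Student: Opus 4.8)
The plan is to obtain $\mu^A$ as the orthogonal projection of $\mu$ onto $\mathcal E^+_A$ and then to translate the two projection relations of Theorem~\ref{th-proj} into the stated potential-theoretic properties. First I would observe that, since $A$ is closed, $\mathcal E^+_A$ is a convex cone containing $0$ that is strongly complete and hence strongly closed (as recorded in Section~\ref{sec:prel}); convexity is clear because $E(\nu_1,\nu_2)\leqslant\|\nu_1\|\,\|\nu_2\|<\infty$ keeps sums within $\mathcal E^+_A$. Thus Theorem~\ref{th-proj} applies and yields a unique $\mu^A:=P_{\mathcal E^+_A}(\mu)\in\mathcal E^+_A$ characterized by $(\mu-\mu^A,\lambda)\leqslant0$ for all $\lambda\in\mathcal E^+_A$ and by $(\mu-\mu^A,\mu^A)=0$, i.e.\ by (\ref{proj1}) and (\ref{proj2}).

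Writing $g:=U^{\mu^A}-U^\mu$ and using $(\nu,\lambda)=\int U^\nu\,d\lambda$, relation (\ref{proj1}) reads $\int g\,d\lambda\geqslant0$ for every $\lambda\in\mathcal E^+_A$, all these integrals being finite differences of mutual energies. To extract the inequality $U^{\mu^A}\geqslant U^\mu$ n.e.\ on $A$ I would argue by contradiction: if the Borel set $E:=\{x\in A:\,U^{\mu^A}(x)<U^\mu(x)\}$ had positive inner capacity, then, since $U^\mu<\infty$ n.e.\ and by the countable subadditivity of inner capacity (Lemma~\ref{subad}), one of the Borel level sets $E_m:=\{x\in A:\,U^{\mu^A}(x)\leqslant U^\mu(x)-1/m,\ U^\mu(x)\leqslant m\}$ would also have positive capacity (here $U^{\mu^A},U^\mu$ are l.s.c., hence Borel). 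By Lemma~\ref{2.3.1} there would be a nonzero $\lambda\in\mathcal E^+_{E_m}\subset\mathcal E^+_A$, for which $\int g\,d\lambda\leqslant-\lambda(\mathbb R^n)/m<0$, contradicting the above. Hence $c_\alpha(E)=0$, that is, $U^{\mu^A}\geqslant U^\mu$ n.e.\ on $A$.

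For the reverse inequality on all of $\mathbb R^n$ I would invoke (\ref{proj2}), which says $\int g\,d\mu^A=0$. Since $\mu^A\in\mathcal E^+$ is absolutely continuous, it does not charge the zero-capacity exceptional set produced in the previous step, so $g\geqslant0$ holds $\mu^A$-a.e.; together with $\int g\,d\mu^A=0$ this forces $g=0$ $\mu^A$-a.e., in particular $U^{\mu^A}\leqslant U^\mu$ $\mu^A$-a.e. The domination principle (Theorem~\ref{dom}, with $\nu=\mu$ and $q=0$) then upgrades this to $U^{\mu^A}\leqslant U^\mu$ everywhere on $\mathbb R^n$, which is (\ref{eq-bala-f2}); combined with the previous paragraph it gives $U^{\mu^A}=U^\mu$ n.e.\ on $A$, i.e.\ (\ref{eq-bala-f1}).

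Finally, for uniqueness within $\mathcal E^+_A$, suppose $\nu\in\mathcal E^+_A$ also satisfies $U^\nu=U^\mu$ n.e.\ on $A$. By Lemma~\ref{subad} then $U^\nu=U^{\mu^A}$ n.e.\ on $A$, and since both $\nu$ and $\mu^A$ are absolutely continuous and carried by $A$, this equality holds $\nu$-a.e.\ and $\mu^A$-a.e. Expanding $\|\nu-\mu^A\|^2=\int(U^\nu-U^{\mu^A})\,d\nu-\int(U^\nu-U^{\mu^A})\,d\mu^A$ shows that both integrals vanish, whence $\|\nu-\mu^A\|=0$ and $\nu=\mu^A$ by strict positive definiteness of the kernel. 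I expect the only genuinely delicate point to be the second paragraph — the passage from the variational inequality $\int g\,d\lambda\geqslant0$ to a nearly-everywhere pointwise statement — which relies on the Borel measurability of the level sets and on the capacity characterization in Lemma~\ref{2.3.1}; the remaining steps are essentially formal once the domination principle is available.
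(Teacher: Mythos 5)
Your proposal is correct and follows essentially the same route as the paper: project $\mu$ onto the strongly closed convex cone $\mathcal E^+_A$ via Theorem~\ref{th-proj}, derive $U^{\mu^A}\geqslant U^\mu$ n.e.\ on $A$ from (\ref{proj1}), combine (\ref{proj2}) with absolute continuity and the domination principle to get (\ref{eq-bala-f2}), and prove uniqueness by an energy computation. The only cosmetic difference is in the middle step, where the paper restricts an arbitrary $\lambda\in\mathcal E^+$ to the exceptional set and concludes $\lambda|_E=0$ directly, whereas you decompose that set into level sets and invoke Lemma~\ref{subad}; both are sound.
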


\begin{proof} For $A$ closed, the convex cone $\mathcal E^+_A$ is strongly closed, because the strong topology on $\mathcal E^+$ is stronger than the vague topology on $\mathcal E^+$ while $\mathfrak M^+_A$ is vaguely closed. According to Theorem~\ref{th-proj}, there exists therefore a unique orthogonal projection $P\mu=P_{\mathcal E^+_A}(\mu)$ of $\mu$ onto $\mathcal E^+_A$, and it is determined uniquely by (\ref{proj1}) and (\ref{proj2}) with $\mathcal M:=\mathcal E^+_A$. Relations (\ref{eq-bala-f1}) and (\ref{eq-bala-f2}) with $\mu^A:=P\mu$ can now be established in a manner similar to that in \cite[Proof of Theorem~4.16]{L}.

Indeed, the restriction $\lambda|_E$ of any $\lambda\in\mathcal E^+$ to the Borel set $E:=\bigl\{x\in A:\ U^\mu(x)>U^{P\mu}(x)\bigr\}$
belongs to $\mathcal E^+_A$, hence $(\mu-P\mu,\lambda|_E)\leqslant0$ by (\ref{proj1}), and consequently $\lambda|_E=0$. Since $\lambda\in\mathcal E^+$ has been chosen arbitrarily, $c_\alpha(E)=0$ according to Lemma~\ref{2.3.1}. Thus,
\begin{equation}\label{geq}U^{P\mu}\geqslant U^\mu\text{ \ n.e.\ on\ }A.\end{equation}

Another use of Lemma~\ref{2.3.1} now gives $U^{P\mu}\geqslant U^\mu$ $P\mu$-a.e., which together with (\ref{proj2}) shows that, actually,
$U^{P\mu}=U^\mu$ $P\mu$-a.e. By the domination principle (see Theorem~\ref{dom}), this yields (\ref{eq-bala-f2}), which combined with (\ref{geq}) establishes (\ref{eq-bala-f1}).

If (\ref{eq-bala-f1}) also holds with $\theta\in\mathcal E^+_A$, then $\nu:=\theta-P\mu\in\mathcal E$ and, furthermore, $U^\nu=0$ n.e.\ on $A$, hence $\nu$-a.e., again by Lemma~\ref{2.3.1}. We therefore obtain by integration $\|\nu\|=0$, which implies $\theta=P\mu$, the Riesz kernel being strictly positive definite.\end{proof}

\begin{remark}One could equally well write `\emph{q.e.}' (\emph{quasi everywhere\/}) instead of `n.e.' in (\ref{eq-bala-f1}), where `q.e.' refers to \emph{outer\/}  capacity \cite[Chapter~II, Section~2, n$^\circ$~6]{L}. Indeed, $\psi:=U^{\mu^A}-U^\mu$, being the difference between two l.s.c.\ functions, is Borel measurable, and hence $\{x\in A:\ \psi(x)\ne0\}$ is \emph{cap\-acit\-able\/} \cite[Theorem~2.8]{L}.\end{remark}

\begin{corollary}\label{rest-fin}If\/ $F$ is a closed subset of\/ $A$ with\/ $c_\alpha(F)>0$, then
\begin{equation}\label{eq-rest-fin}\mu^F=(\mu^A)^F\text{ \ for every\ }\mu\in\mathcal E^+.\end{equation}
\end{corollary}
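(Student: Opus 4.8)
The plan is to identify $(\mu^A)^F$ with $\mu^F$ by checking that the former satisfies the property that characterizes $\mu^F$ uniquely within $\mathcal E^+_F$ according to Theorem~\ref{th-bala-f}. Since $\mu^A\in\mathcal E^+_A\subset\mathcal E^+$ and $F$ is closed with $c_\alpha(F)>0$, the balayage $(\mu^A)^F$ is well defined and belongs to $\mathcal E^+_F$; likewise $\mu^F\in\mathcal E^+_F$. By the uniqueness assertion of Theorem~\ref{th-bala-f} (which says that an element of $\mathcal E^+_F$ is pinned down by the n.e.\ coincidence of potentials on $F$ alone), it therefore suffices to establish
\[U^{(\mu^A)^F}=U^\mu\text{ \ n.e.\ on\ }F.\]

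First I would apply Theorem~\ref{th-bala-f} to the pair $(\mu^A,F)$, obtaining $U^{(\mu^A)^F}=U^{\mu^A}$ n.e.\ on $F$. Next, applying the same theorem to the pair $(\mu,A)$ gives $U^{\mu^A}=U^\mu$ n.e.\ on $A$; and since $F\subset A$, this equality holds in particular n.e.\ on $F$. Chaining the two relations then yields $U^{(\mu^A)^F}=U^\mu$ n.e.\ on $F$, which is exactly the identity needed, and the uniqueness clause of Theorem~\ref{th-bala-f} closes the argument, forcing $(\mu^A)^F=\mu^F$.

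The only point requiring a little care is the passage through the chain of ``n.e.''\ statements: each of the two equalities fails only on a set of zero inner capacity, and to combine them one must know that the union of these two exceptional sets still has zero inner capacity. This is the (finite, hence trivially countable) subadditivity of inner capacity recorded in Lemma~\ref{subad}. With that in hand, both equalities hold simultaneously off a single set of zero inner capacity. I expect no genuine obstacle here: the corollary is essentially a transitivity (tower) property of balayage onto nested closed sets, and its entire content is the bookkeeping of the exceptional null sets together with the correct invocation of the uniqueness part of Theorem~\ref{th-bala-f}.
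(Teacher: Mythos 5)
Your proof is correct and is essentially identical to the paper's: both arguments chain the n.e.\ identities $U^{(\mu^A)^F}=U^{\mu^A}=U^\mu$ on $F$ and invoke the uniqueness clause of Theorem~\ref{th-bala-f} within $\mathcal E^+_F$. The only (harmless) difference is that you explicitly flag the union of the two exceptional null sets via Lemma~\ref{subad}, a point the paper passes over silently.
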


\begin{proof}According to Theorem \ref{th-bala-f}, both $\mu^F$ and $(\mu^A)^F$ belong to $\mathcal E^+_F$ and, moreover,
\[U^{(\mu^A)^F}=U^{\mu^A}=U^\mu=U^{\mu^F}\text{ \ n.e.\ on\ }F.\]
Since (\ref{eq-bala-f1}) with $A=F$ determines $\mu^F$ uniquely within $\mathcal E^+_F$, (\ref{eq-rest-fin}) follows.\end{proof}

\subsection{Step 2: $\mu\in\mathcal E^+$ and $A$ arbitrary}\label{st2} Still requiring that $\mu\in\mathfrak M^+$ have finite energy, we now extend our analysis to $A$ arbitrary. Let  $\mathcal E_A'$ denote the strong cluster set of $\mathcal E^+_A$. Obviously, $\mathcal E_A'$ is a strongly closed convex cone in $\mathcal E^+$.

\begin{theorem}\label{th-f-cont}For\/ $\mu\in\mathcal E^+$ and\/ $A$ arbitrary, there is a unique\/ $\mu^A\in\mathcal E^+$ such that
\begin{equation}\label{eq-f-cont'}\mu^K\to\mu^A\text{ \ strongly and vaguely in\ }\mathcal E^+,\end{equation}
where\/ $K$ increases along the upper directed family\/ $\mathfrak C=\mathfrak C_A$ of all compact subsets of\/ $A$ and\/ $\mu^K$ is defined in Theorem\/~{\rm\ref{th-bala-f}}.
This\/ $\mu^A$ can alternatively be defined as the orthogonal projection of\/ $\mu$ onto\/ $\mathcal E_A'$, that is,\footnote{This implies that for $A$ closed, the measure $\mu^A$ determined by Theorem~\ref{th-f-cont} coincides with that determined by Theorem~\ref{th-bala-f}.}
\begin{equation}\label{proj'}\|\mu-\mu^A\|=\min_{\nu\in\mathcal E_A'}\,\|\mu-\nu\|=\varrho(\mu,\mathcal E_A').\end{equation}
\end{theorem}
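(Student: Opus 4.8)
The plan is to exploit the perfectness of the Riesz kernel together with the orthogonal‐projection characterization from Theorem~\ref{th-proj}. First I would observe that the net $(\mu^K)_{K\in\mathfrak C_A}$ is upper directed in the following sense: if $K_1\subset K_2$ are compact subsets of $A$, then by Corollary~\ref{rest-fin} applied with $A=K_2$ and $F=K_1$ we have $\mu^{K_1}=(\mu^{K_2})^{K_1}$, so $\mu^{K_1}$ is the orthogonal projection of $\mu^{K_2}$ onto $\mathcal E^+_{K_1}\subset\mathcal E^+_{K_2}$. From the projection relations (\ref{proj1})--(\ref{proj2}) this yields $\|\mu^{K_1}\|\leqslant\|\mu^{K_2}\|\leqslant\|\mu\|$, so the norms $\|\mu^K\|$ increase and are bounded by $\|\mu\|$; hence they converge. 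The key identity is the Pythagorean relation $\|\mu^{K_2}-\mu^{K_1}\|^2=\|\mu^{K_2}\|^2-\|\mu^{K_1}\|^2$, which follows from $(\mu^{K_2}-\mu^{K_1},\mu^{K_1})=0$ (a consequence of $\mu^{K_1}$ being the projection of $\mu^{K_2}$).

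Next I would use this Pythagorean identity to conclude that $(\mu^K)$ is a strong Cauchy net: given the convergence of the increasing bounded quantity $\|\mu^K\|^2$, for any two sufficiently large $K_1\subset K_2$ the right-hand side $\|\mu^{K_2}\|^2-\|\mu^{K_1}\|^2$ is small, and since $\mathfrak C_A$ is upper directed, any two compacta are dominated by a common one, giving the Cauchy property in the strong topology. Because the Riesz kernel is perfect and $\mathcal E^+$ is strongly complete, this Cauchy net converges strongly to some $\mu^A\in\mathcal E^+$, and by perfectness the strong limit is simultaneously the vague limit (the strong topology on $\mathcal E^+$ being finer than the vague one), establishing (\ref{eq-f-cont'}).

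It then remains to identify this limit $\mu^A$ with the orthogonal projection of $\mu$ onto $\mathcal E_A'$, i.e.\ to prove (\ref{proj'}). Since each $\mu^K\in\mathcal E^+_K\subset\mathcal E^+_A$ and $\mu^A$ is the strong limit of the $\mu^K$, the measure $\mu^A$ lies in the strong cluster set $\mathcal E_A'$ by definition. Using Theorem~\ref{th-proj} with $\mathcal M:=\mathcal E_A'$ (legitimate because $\mathcal E_A'$ is a strongly closed convex cone containing $0$), I would verify the characterizing relations (\ref{proj1}) and (\ref{proj2}) for $\mu^A$. For (\ref{proj2}), $(\mu-\mu^A,\mu^A)=\lim_K(\mu-\mu^K,\mu^K)=0$ since each summand vanishes by the projection property of $\mu^K$ onto $\mathcal E^+_K$ and the inner product is strongly continuous. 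For (\ref{proj1}), it suffices to check $(\mu-\mu^A,\lambda)\leqslant0$ for $\lambda$ ranging over the dense subfamily $\bigcup_K\mathcal E^+_K$ of $\mathcal E_A'$, then pass to the strong limit; for $\lambda\in\mathcal E^+_K$ one has $(\mu-\mu^A,\lambda)=\lim_{K'\supset K}(\mu-\mu^{K'},\lambda)\leqslant0$, again by (\ref{proj1}) applied to each $\mu^{K'}$.

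The main obstacle I anticipate is the interchange of limits needed to verify (\ref{proj1}), specifically the passage from the dense subcone $\bigcup_{K}\mathcal E^+_K$ to all of $\mathcal E_A'=\overline{\bigcup_K\mathcal E^+_K}^{\,\text{strong}}$; here I must argue that the linear functional $\nu\mapsto(\mu-\mu^A,\nu)$ is strongly continuous and that the inequality, being closed, survives the strong closure. A minor subtlety is that uniqueness of $\mu^A$ in (\ref{eq-f-cont'}) follows automatically once it is identified as a genuine orthogonal projection, since Theorem~\ref{th-proj} guarantees that projection is unique.
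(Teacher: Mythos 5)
Your argument is correct and reaches the same conclusion by a genuinely different pair of computations. Where the paper gets the strong Cauchy property from Fuglede's Lemma~4.1.1 in \cite{F1} applied to the \emph{decreasing} net of distances $\|\mu-\mu^K\|$, you get it from the nested-projection identity $\mu^{K_1}=(\mu^{K_2})^{K_1}$ of Corollary~\ref{rest-fin} together with the Pythagorean relation $\|\mu^{K_2}-\mu^{K_1}\|^2=\|\mu^{K_2}\|^2-\|\mu^{K_1}\|^2$ for the \emph{increasing} bounded net of norms $\|\mu^K\|$; both routes then invoke perfectness in the same way. (Do note, as the paper does, that one should restrict to $K$ with $c_\alpha(K)>0$ so that Corollary~\ref{rest-fin} applies.) Likewise, for (\ref{proj'}) the paper compares the distances $\varrho(\mu,\mathcal E^+_K)$, $\varrho(\mu,\mathcal E^+_A)$ and $\varrho(\mu,\mathcal E_A')$, whereas you verify the variational characterization (\ref{proj1})--(\ref{proj2}) of Theorem~\ref{th-proj} directly; your limit computations $(\mu-\mu^A,\mu^A)=\lim_K(\mu-\mu^K,\mu^K)=0$ and $(\mu-\mu^A,\lambda)=\lim_{K'}(\mu-\mu^{K'},\lambda)\leqslant0$ for $\lambda\in\mathcal E^+_K$ are sound, since the inner product is strongly continuous. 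The one place where your write-up is thinner than it should be is the identity $\mathcal E_A'=\overline{\bigcup_K\mathcal E^+_K}^{\,\mathrm{strong}}$, which you treat as definitional: $\mathcal E_A'$ is defined as the strong cluster set of $\mathcal E^+_A$, so you still owe the inclusion $\mathcal E^+_A\subset\overline{\bigcup_K\mathcal E^+_K}^{\,\mathrm{strong}}$, i.e.\ that every $\nu\in\mathcal E^+_A$ is the \emph{strong} limit of its restrictions $\nu|_K$. This is precisely what the second half of the paper's proof supplies: $\nu|_K\to\nu$ vaguely by \cite[Lemma~1.2.2]{F1}, whence $\|\nu|_K\|\to\|\nu\|$ and $(\lambda,\nu|_K)\to(\lambda,\nu)$ by vague lower semicontinuity combined with the obvious opposite inequalities, and therefore $\|\nu-\nu|_K\|^2=\|\nu\|^2-2(\nu,\nu|_K)+\|\nu|_K\|^2\to0$. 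Once that paragraph is inserted, your density-plus-continuity passage for (\ref{proj1}) closes, and the proof is complete; your remark that uniqueness is automatic (either from the projection characterization or from the Hausdorff property of the norm topology) is fine.
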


\begin{proof}In view of (\ref{cap-incr}) and our assumption $c_\alpha(A)>0$, we may consider only those $K\in\mathfrak C$ whose capacity is ${}>0$. Since obviously
\[\|\mu-\mu^{K'}\|\leqslant\|\mu-\mu^K\|\text{ \ whenever\ }K\subset K',\]
Lemma~4.1.1 in \cite{F1} with $\mathcal H:=\mathcal E$, $\Gamma:=\{\mu-\nu:\ \nu\in\mathcal E^+_{K'}\}$, and $\lambda:=\mu-\mu^{K'}$ yields
\[\|\mu^K-\mu^{K'}\|^2=\|(\mu-\mu^K)-(\mu-\mu^{K'})\|^2\leqslant\|\mu-\mu^K\|^2-\|\mu-\mu^{K'}\|^2.\]
Being decreasing and lower bounded, the net $(\|\mu-\mu^K\|)_{K\in\mathfrak C}$ is Cauchy in $\mathbb R$, which together with the last display implies that the net $(\mu^K)_{K\in\mathfrak C}$ is strong Cauchy in $\mathcal E^+$. Being thus strongly bounded, $(\mu^K)_{K\in\mathfrak C}$ is vaguely bounded by \cite[Lemma~2.5.1]{F1}, and has a vague cluster point $\mu_0\in\mathfrak M^+$ according to \cite[Chapter~III, Section~2, Proposition~9]{B2}. Moreover, $\mu_0\in\mathcal E^+$ because the energy is vaguely l.s.c.\ on $\mathfrak M^+$ \cite[Eq.~(1.4.4)]{L}. Since the Riesz kernel is perfect (cf.\ Section~\ref{sec:prel}), $\mu^K\to\mu_0$ strongly in $\mathcal E^+$, and this $\mu_0$ is unique. As the vague topology on $\mathfrak M$ is Hausdorff, the unique vague cluster point $\mu_0$ of the net $(\mu^K)_{K\in\mathfrak C}$ has to be its vague limit \cite[Chapter~I, Section~9, n$^\circ$\,1]{B1}. This establishes (\ref{eq-f-cont'}) with $\mu^A:=\mu_0$.

It follows from (\ref{eq-f-cont'}) that $\mu^A\in\mathcal E_A'$ and, moreover,
\begin{equation}\label{stream}\varrho(\mu,\mathcal E_A^+)=\varrho(\mu,\mathcal E_A')\leqslant\|\mu-\mu^A\|=\lim_{K\uparrow A}\,\|\mu-\mu^K\|=\lim_{K\uparrow A}\,\varrho(\mu,\mathcal E_K^+),\end{equation}
the first equality being evident. On the other hand, for every $\nu\in\mathcal E^+_A$, $\nu|_K\to\nu$ vaguely as $K\uparrow A$ (see e.g.\ \cite[Lemma~1.2.2]{F1}), and therefore
\[\|\nu\|\leqslant\lim_{K\uparrow A}\,\|\nu|_K\|,\quad(\lambda,\nu)\leqslant\lim_{K\uparrow A}\,(\lambda,\nu|_K)\text{ \ for every\ }\lambda\in\mathcal E^+.\]
The opposite being obvious, equality in fact prevails in these inequalities; hence,
\[\|\mu-\nu\|=\lim_{K\uparrow A}\,\|\mu-\nu|_K\|\geqslant\lim_{K\uparrow A}\,\varrho(\mu,\mathcal E_K^+)\text{ \ for every\ }\nu\in\mathcal E^+_A,\]
and consequently
\[\varrho(\mu,\mathcal E^+_A)\geqslant\lim_{K\uparrow A}\,\varrho(\mu,\mathcal E_K^+).\]
Combining this with (\ref{stream}) establishes (\ref{proj'}).
\end{proof}

\begin{corollary}\label{cor-eq-arb}Both\/ {\rm(\ref{eq-bala-f1})} and\/ {\rm(\ref{eq-bala-f2})} remain valid for\/ $\mu^A$ defined in Theorem\/~{\rm\ref{th-f-cont}}. Furthermore, $U^{\mu^K}$ increases to\/ $U^{\mu^A}$ pointwise on\/ $\mathbb R^n$ as\/ $K\uparrow A$.
\end{corollary}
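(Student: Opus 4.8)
The plan is to transfer the two pointwise inequalities \eqref{eq-bala-f1} and \eqref{eq-bala-f2}, already known for each compact $K$ with $c_\alpha(K)>0$ by Corollary~\ref{cor-eq-arb}'s hypotheses (i.e.\ Theorem~\ref{th-bala-f}), to the limit measure $\mu^A$ using the two modes of convergence supplied by Theorem~\ref{th-f-cont}. The key bridge is the monotonicity of the net $(\mu^K)_{K\in\mathfrak C}$: I first want to show that $U^{\mu^K}$ \emph{increases} with $K$, so that the vague convergence $\mu^K\to\mu^A$ can be upgraded to monotone pointwise convergence of potentials via the semicontinuity machinery already assembled in Theorem~\ref{3.9}.

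First I would prove the monotonicity $U^{\mu^K}\leqslant U^{\mu^{K'}}$ whenever $K\subset K'$. By Corollary~\ref{rest-fin} applied to the closed sets $K\subset K'$ one has $\mu^K=(\mu^{K'})^K$, and then \eqref{eq-bala-f2} for the pair $(\mu^{K'},K)$ gives $U^{\mu^K}=U^{(\mu^{K'})^K}\leqslant U^{\mu^{K'}}$ on all of $\mathbb R^n$. Thus $(U^{\mu^K})_{K\in\mathfrak C}$ is an upper directed family of potentials. It is majorized by $U^\mu$ because \eqref{eq-bala-f2} holds for each $K$. Theorem~\ref{3.9} now applies with $\mu_t:=\mu^K$: there is $\nu\in\mathfrak M^+$ with $U^{\mu^K}\uparrow U^\nu$ pointwise on $\mathbb R^n$ and $\mu^K\to\nu$ vaguely. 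But Theorem~\ref{th-f-cont} already identifies the vague limit as $\mu^A$, so by uniqueness of vague limits $\nu=\mu^A$, giving the asserted monotone pointwise convergence $U^{\mu^K}\uparrow U^{\mu^A}$. This settles the second sentence of the corollary.

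For \eqref{eq-bala-f2} with $\mu^A$, I simply pass to the pointwise limit in $U^{\mu^K}\leqslant U^\mu$; since each term is $\leqslant U^\mu$ and the $\sup$ over $K$ equals $U^{\mu^A}$, the inequality is preserved. For \eqref{eq-bala-f1} the subtler direction is $U^{\mu^A}\geqslant U^\mu$ n.e.\ on $A$. Here I would exhaust $A$ by compacts: for each $K\in\mathfrak C$ with $c_\alpha(K)>0$, \eqref{eq-bala-f1} gives $U^{\mu^K}=U^\mu$ n.e.\ on $K$, and since $U^{\mu^A}\geqslant U^{\mu^K}$ pointwise by the monotonicity just proved, one gets $U^{\mu^A}\geqslant U^\mu$ n.e.\ on $K$. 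Combined with the already-established reverse inequality \eqref{eq-bala-f2}, this yields $U^{\mu^A}=U^\mu$ n.e.\ on each such $K$. To conclude equality n.e.\ on all of $A$ I would invoke the countable subadditivity of inner capacity (Lemma~\ref{subad}): writing $A$ as a countable union of sets $A\cap E_k$ with $E_k$ a suitable exhausting sequence of closed balls, and noting that on each bounded piece the equality holds n.e.\ because it holds n.e.\ on every compact subset.

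The main obstacle I anticipate is precisely this last passage from ``n.e.\ on every compact $K\subset A$'' to ``n.e.\ on $A$'' for \emph{arbitrary} $A$, since inner capacity is only countably subadditive, not a measure, and a general $A$ need not be the countable union of its compact subsets. The clean way around it is to fix a countable exhaustion of $\mathbb R^n$ by closed balls $\overline B(0,k)$ and observe that $A\cap\overline B(0,k)$ has the property that the exceptional set (where $U^{\mu^A}\ne U^\mu$) meets it in a set of zero capacity---this requires checking that within the bounded region the compacts $K\subset A\cap\overline B(0,k)$ with positive capacity approximate the capacity of $A\cap\overline B(0,k)$ in the sense of \eqref{cap-incr}, so that an exceptional set of positive inner capacity would already show up inside some compact subset. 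Lemma~\ref{subad} then stitches the countably many bounded pieces together, delivering equality n.e.\ on $A$.
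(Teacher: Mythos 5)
Your argument is correct, and it coincides with the paper's for the second assertion and for (\ref{eq-bala-f2}): the same chain $U^{\mu^K}=U^{(\mu^{K'})^K}\leqslant U^{\mu^{K'}}\leqslant U^\mu$ via Corollary~\ref{rest-fin}, then Theorem~\ref{3.9} and the Hausdorff property of the vague topology to identify the limit with $\mu^A$ from (\ref{eq-f-cont'}). Where you genuinely diverge is (\ref{eq-bala-f1}): the paper does not pass through the compact case at all, but instead uses the characterization of $\mu^A$ as the orthogonal projection onto $\mathcal E_A'$, combining (\ref{proj1}) with the already established (\ref{eq-bala-f2}) to get $(\mu-\mu^A,\nu)=0$ for all $\nu\in\mathcal E_A'$, and then runs the restriction argument of Theorem~\ref{th-bala-f} (restrict $\lambda\in\mathcal E^+$ to $E=\{x\in K:U^{\mu^A}(x)<U^\mu(x)\}$ and invoke Lemma~\ref{2.3.1}). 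Your route --- deducing $U^{\mu^A}\geqslant U^\mu$ n.e.\ on each $K\in\mathfrak C_A$ directly from $U^{\mu^A}\geqslant U^{\mu^K}$ and (\ref{eq-bala-f1}) for $K$ --- is simpler and avoids the projection machinery entirely; it is perfectly valid. One remark on your final ``stitching'' step: the anticipated obstacle is not really there, and the closed-ball decomposition with Lemma~\ref{subad} is unnecessary. By the very definition of inner capacity and (\ref{cap-incr}), if the exceptional set $N:=\{x\in A:U^{\mu^A}(x)<U^\mu(x)\}$ had $c_\alpha(N)>0$, it would contain a compact $K_0$ with $c_\alpha(K_0)>0$; but $K_0$ is then a compact subset of $A$ on which the assertion fails everywhere, contradicting what you proved for compacts. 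Note that the paper's own proof also only exhibits $c_\alpha(E)=0$ for $E\subset K$, $K\in\mathfrak C_A$, and relies on exactly this inner regularity to conclude; so once you see that point, your last paragraph collapses to one line and the two proofs are equally complete.
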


\begin{proof}For any $K,K'\in\mathfrak C_A$ such that $K\subset K'$, we see from (\ref{eq-bala-f2}) and (\ref{eq-rest-fin}) that
\[U^{\mu^K}=U^{(\mu^{K'})^K}\leqslant U^{\mu^{K'}}\leqslant U^\mu\text{ \ on\ }\mathbb R^n.\]
By Theorem~\ref{3.9}, there is therefore $\nu\in\mathfrak M^+$ such that
$U^{\mu^K}\uparrow U^\nu$ pointwise on $\mathbb R^n$ and $\mu^K\to\nu$ vaguely (as $K\uparrow A$). This together with (\ref{eq-f-cont'}) yields $\nu=\mu^A$, the vague topology on $\mathfrak M$ being Hausdorff, thereby establishing the latter assertion of the corollary.
Letting $K\uparrow A$ in the last display now gives (\ref{eq-bala-f2}).

Being the orthogonal projection of $\mu$ onto $\mathcal E_A'$, $\mu^A$ is characterized by (\ref{proj1}) and (\ref{proj2}) with $\mathcal M:=\mathcal E_A'$.
Having written (\ref{proj1}) for every $\nu\in\mathcal E'_A$, and then compared with (\ref{eq-bala-f2}), we obtain $(\mu-\mu^A,\nu)=0$, or equivalently
\begin{equation}\label{proj-proof}U^{\mu^A}=U^\mu\text{ \ $\nu$-a.e.\ for every\ }\nu\in\mathcal E'_A.\end{equation}
This leads to (\ref{eq-bala-f1}) in a way similar to that in the proof of Theorem~\ref{th-bala-f}. Indeed, for every $K\in\mathfrak C_A$ and every $\lambda\in\mathcal E^+$, consider $\lambda|_E$, where $E$ consists of all $x\in K$ with $U^{\mu^A}(x)<U^\mu(x)$. Since $\lambda|_E\in\mathcal E^+_A$, we have $\lambda|_E=0$ by (\ref{proj-proof}), and hence $c_\alpha(E)=0$ according to Lemma~\ref{2.3.1}.
\end{proof}

\begin{remark}\label{Borel}For $A$ Borel, Theorem~\ref{th-f-cont} and Corollary~\ref{cor-eq-arb} remain valid if $\mathfrak C_A$ is replaced by an increasing sequence $(A_k)_{k\in\mathbb N}$ of Borel sets, whose union equals $A$. This can be seen similarly as above, the only delicate point being in proving the vague convergence of $(\nu|_{A_k})_{k\in\mathbb N}$ to $\nu\in\mathfrak M^+_A$. This convergence is established by applying \cite[Chapter~IV, Section~1, Theorem~3]{B2} to $(1_{A_k}f)_{k\in\mathbb N}$, where $f\in C_0(\mathbb R^n)$ is positive and $1_Q$ denotes the indicator function of a set $Q$.\end{remark}

\begin{corollary}\label{th-f-contu}
For\/ $\mu\in\mathcal E^+$ and\/ $A$ arbitrary,
\begin{equation}\label{eq-f-sym}(\mu,\lambda^A)=(\mu^A,\lambda)\text{ \ for all\ }\lambda\in\mathcal E^+.\end{equation}
Identity\/ {\rm(\ref{eq-f-sym})} determines\/ $\mu^A$ uniquely; that is, if it holds for\/ $\nu\in\mathfrak M^+$ in place of\/ $\mu^A$, then\/ $\nu=\mu^A$.\end{corollary}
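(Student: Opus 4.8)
The plan is to reduce the identity to the compact case, where it follows from the characterization of balayage by potentials, and then pass to the limit $K\uparrow A$; the uniqueness will be handled separately via potentials, since $\nu$ is not assumed to have finite energy.

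First I would establish the reciprocity at the level of compacts: for $\mu,\lambda\in\mathcal E^+$ and any $K\in\mathfrak C_A$,
\[(\mu,\lambda^K)=(\mu^K,\lambda)=E(\mu^K,\lambda^K).\]
To see this, write $(\mu,\lambda^K)=\int U^\mu\,d\lambda^K$ by the symmetry of mutual energy. By Theorem~\ref{th-bala-f} one has $U^{\mu^K}=U^\mu$ n.e.\ on $K$, hence $\lambda^K$-a.e., because $\lambda^K\in\mathcal E^+$ is absolutely continuous and so does not charge sets of zero inner capacity (Lemma~\ref{2.3.1}); therefore $\int U^\mu\,d\lambda^K=\int U^{\mu^K}\,d\lambda^K=E(\mu^K,\lambda^K)$. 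Symmetrically, $(\mu^K,\lambda)=\int U^\lambda\,d\mu^K=\int U^{\lambda^K}\,d\mu^K=E(\mu^K,\lambda^K)$, now using $U^{\lambda^K}=U^\lambda$ $\mu^K$-a.e. This settles the compact case.

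Next I would pass to the limit. By Theorem~\ref{th-f-cont} we have $\mu^K\to\mu^A$ and $\lambda^K\to\lambda^A$ strongly in $\mathcal E^+$ as $K\uparrow A$, and the inner product on $\mathcal E$ is strongly continuous by Cauchy--Schwarz, since $|(\mu,\lambda^K-\lambda^A)|\leqslant\|\mu\|\,\|\lambda^K-\lambda^A\|$ and $|(\mu^K-\mu^A,\lambda)|\leqslant\|\mu^K-\mu^A\|\,\|\lambda\|$. Letting $K\uparrow A$ in the compact reciprocity identity therefore yields $(\mu,\lambda^A)=(\mu^A,\lambda)$, which is (\ref{eq-f-sym}).

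For uniqueness, suppose $\nu\in\mathfrak M^+$ also satisfies $(\mu,\lambda^A)=(\nu,\lambda)$ for all $\lambda\in\mathcal E^+$. Combined with (\ref{eq-f-sym}) this gives $\int U^\nu\,d\lambda=\int U^{\mu^A}\,d\lambda$ for every $\lambda\in\mathcal E^+$. The hard part will be to upgrade this to equality of measures without assuming $E(\nu)<\infty$; the plan is to test against $\lambda=g\,dx$ with $g\in C_0(\mathbb R^n)$ positive, which lies in $\mathcal E^+$ thanks to the local integrability of the Riesz kernel together with the boundedness and compact support of $g$. By Fubini the identity becomes $\int U^\nu g\,dx=\int U^{\mu^A}g\,dx$ for all such $g$, and hence for all $g\in C_0(\mathbb R^n)$; since both potentials are locally integrable, this forces $U^\nu=U^{\mu^A}$ a.e.\ (Lebesgue). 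As both $U^\nu$ and $U^{\mu^A}$ are $\alpha$-superharmonic (Section~\ref{sec:prel}) and an $\alpha$-superharmonic function is determined by its values off a set of measure zero, I obtain $U^\nu=U^{\mu^A}$ everywhere on $\mathbb R^n$; finally, the Riesz potential determining its generating measure, we conclude $\nu=\mu^A$.
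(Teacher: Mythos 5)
Your proof is correct, but for the identity (\ref{eq-f-sym}) it takes a different route from the paper's. The paper derives (\ref{eq-f-sym}) in one stroke from the characterization of $\mu^A$ as the orthogonal projection of $\mu$ onto the strong cluster set $\mathcal E_A'$: since $\lambda^A,\mu^A\in\mathcal E_A'$, relation (\ref{proj-proof}) gives $(\mu-\mu^A,\lambda^A)=0$ and $(\lambda-\lambda^A,\mu^A)=0$, and subtraction yields the claim. You instead prove the reciprocity law first for compact $K$ --- using only the n.e.\ identity $U^{\mu^K}=U^\mu$ on $K$ from Theorem~\ref{th-bala-f} together with the absolute continuity of finite-energy measures (Lemma~\ref{2.3.1}) --- and then pass to the limit $K\uparrow A$ via the strong convergence of Theorem~\ref{th-f-cont} and the strong continuity of the inner product. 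This buys you independence from the identification of $\mu^A$ as the projection onto $\mathcal E_A'$ (you only need the strong limit statement (\ref{eq-f-cont'})), at the cost of an extra limiting step; the paper's argument is shorter but leans on Corollary~\ref{cor-eq-arb}. For the uniqueness, your argument is essentially the paper's with a different family of test measures: you take $\lambda=g\,dx$, $g\in C_0(\mathbb R^n)$ positive, to get $U^\nu=U^{\mu^A}$ Lebesgue-a.e.\ and then everywhere by $\alpha$-superharmonicity, whereas the paper tests against translated uniform measures $m^{(r)}$ on balls and lets $r\to0$; both conclude with the uniqueness theorem \cite[Theorem~1.11]{L}. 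One small point worth making explicit in your compact-case step: the exceptional set $E\subset K$ where $U^{\mu^K}\ne U^\mu$ is Borel (by (\ref{eq-bala-f2}) it is $\{U^{\mu^K}<U^\mu\}$, a difference of l.s.c.\ functions), so $\lambda^K(E)=\sup\lambda^K(C)$ over compact $C\subset E$, each of which is null by Lemma~\ref{2.3.1}; this is exactly how the paper itself passes from ``n.e.'' to ``$\lambda^K$-a.e.''
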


\begin{proof} Since $\lambda^A,\mu^A\in\mathcal E_A'$ for any $\lambda,\mu\in\mathcal E^+$, we obtain from (\ref{proj-proof})
\[(\mu-\mu^A,\lambda^A)=0\text{ \ and \ }(\lambda-\lambda^A,\mu^A)=0,\]
and (\ref{eq-f-sym}) follows by subtraction. If (\ref{eq-f-sym}) also holds for $\nu\in\mathfrak M^+$ in place of $\mu^A$, then
\[U^{\mu^A}\ast m^{(r)}=U^\nu\ast m^{(r)}\text{ \ for any\ }r>0,\]
where $m^{(r)}$ is the measure obtained by uniformly distributing unit mass over $B(0,r)$ and $\ast$ denotes convolution. Letting $r\to0$ and applying \cite[Theorem~1.11]{L} gives $\nu=\mu^A$ as claimed.\end{proof}

\begin{corollary}\label{place}For\/ $A$ arbitrary,
\[\mu^A=\mu\text{ \ for every\ }\mu\in\mathcal E'_A.\footnote{In particular, $\mu^A=\mu$ for every $\mu\in\mathcal E^+_A$. However, this is no longer valid (not even for $A$ closed) if we drop the requirement $E(\mu)<\infty$. For instance, $\mu^A=\mu$ does not hold for $\mu=\varepsilon_y$, provided that $y$ is an inner $\alpha$-ir\-reg\-ular point for $A$ (see Section~\ref{sec-wiener} below).}\]\end{corollary}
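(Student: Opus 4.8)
The plan is to read off the result directly from the projection characterization of $\mu^A$ furnished by Theorem~\ref{th-f-cont}. Recall that $\mathcal E'_A$, being the strong cluster set of $\mathcal E^+_A$, is a strongly closed convex cone in $\mathcal E^+$ (as noted just before Theorem~\ref{th-f-cont}), and that for any $\mu\in\mathcal E^+$ the balayage $\mu^A$ is, by~(\ref{proj'}), precisely the orthogonal projection of $\mu$ onto $\mathcal E'_A$, i.e.\ the unique element of $\mathcal E'_A$ realizing $\varrho(\mu,\mathcal E'_A)=\min_{\nu\in\mathcal E'_A}\|\mu-\nu\|$.

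First I would note that the hypothesis $\mu\in\mathcal E'_A$ guarantees $\mu\in\mathcal E^+$, so that $\mu^A$ is indeed defined and the projection characterization applies. Then, since $\mu$ itself lies in $\mathcal E'_A$, the choice $\nu=\mu$ gives $\|\mu-\nu\|=0$, which is obviously the smallest possible value of $\|\mu-\nu\|$ over $\nu\in\mathcal E'_A$. As the minimizer is unique (Theorem~\ref{th-proj}), we conclude $\mu^A=\mu$.

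I expect no genuine obstacle here: the statement is an immediate instance of the elementary fact that a point is its own nearest point in any set containing it, and the only ingredients invoked are that $\mu^A$ is the projection onto $\mathcal E'_A$ (Theorem~\ref{th-f-cont}) together with the uniqueness of that projection (Theorem~\ref{th-proj}), both already established. As an alternative route one could argue through the symmetry identity~(\ref{eq-f-sym}): for $\mu\in\mathcal E'_A$, relation~(\ref{proj-proof}) yields $U^{\lambda^A}=U^\lambda$ $\mu$-a.e.\ for every $\lambda\in\mathcal E^+$, whence $(\mu,\lambda^A)=(\mu,\lambda)$ for all such $\lambda$, and the uniqueness in Corollary~\ref{th-f-contu} forces $\mu^A=\mu$; but the projection argument is the shorter of the two.
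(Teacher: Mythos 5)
Your argument is correct and is exactly the paper's proof: the paper simply remarks that the claim is obvious from the projection characterization~(\ref{proj'}), which is precisely your observation that $\mu\in\mathcal E'_A$ realizes the minimum $\varrho(\mu,\mathcal E'_A)=0$, so uniqueness of the projection forces $\mu^A=\mu$. Your alternative route via~(\ref{eq-f-sym}) is also valid but, as you say, unnecessary here.
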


\begin{proof}This is obvious in view of (\ref{proj'}).\end{proof}

\subsection{Step 3: $\mu\in\mathfrak M^+$ and $A$ arbitrary}\label{st3} Assume for a moment that $\mu$ still has finite energy. In view of (\ref{eq-f-cont'}), we call the measure $\mu^A$ defined in Theorem~\ref{th-f-cont} the \emph{inner Riesz balayage of\/ $\mu$ to\/} $A$.  Since this $\mu^A$ is determined uniquely by the symmetry identity (\ref{eq-f-sym}), we are thus led to the following definition of inner Riesz balayage of $\mu\in\mathfrak M^+$ to $A$ arbitrary (cf.\ \cite[p.~257]{Ca2}).

\begin{definition}\label{def-bal}For $\mu\in\mathfrak M^+$ and $A$ arbitrary, we call $\mu^A\in\mathfrak M^+$ an \emph{inner balayage\/} of $\mu$ to $A$ if \begin{equation}\label{alternative}E(\mu,\lambda^A)=E(\mu^A,\lambda)\text{ \ for all\ }\lambda\in\mathcal E^+.\end{equation}
\end{definition}

\begin{theorem}\label{th-bala-2}For\/ $\mu\in\mathfrak M^+$ and\/ $A$ arbitrary, there exists a unique inner balayage\/ $\mu^A\in\mathfrak M^+$, and it satisfies
both\/ {\rm(\ref{eq-bala-f1})} and\/ {\rm(\ref{eq-bala-f2})}.
\end{theorem}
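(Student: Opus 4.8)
The plan is to extend the finite-energy theory of Section~\ref{st2} to general $\mu\in\mathfrak M^+$ by exhausting $\mu$ from below by finite-energy measures, passing to the limit, and then verifying the defining symmetry identity (\ref{alternative}) together with uniqueness.

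\medskip

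\noindent\textbf{Existence.} First I would choose a sequence (net) $(\mu_k)\subset\mathcal E^+$ with $U^{\mu_k}\uparrow U^\mu$ pointwise on $\mathbb R^n$; such a net exists because $U^\mu$ is the pointwise supremum of the potentials of the finite-energy restrictions $\mu|_{K}$, $K\in\mathfrak C_{\mathbb R^n}$ (each such restriction lies in $\mathcal E^+$, and $U^{\mu|_K}\uparrow U^\mu$ by monotone convergence). For each $k$ the measure $\mu_k^A\in\mathcal E^+$ is defined by Theorem~\ref{th-f-cont}. Since $U^{\mu_k}\leqslant U^\mu$ and, by Corollary~\ref{cor-eq-arb}, $U^{\mu_k^A}\leqslant U^{\mu_k}\leqslant U^\mu$, the family $(U^{\mu_k^A})$ is majorized by $U^\mu\in\mathfrak M^+$. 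I must check that this family is upper directed: if $\mu_j\leqslant\mu_k$ pointwise in potential, then by the symmetry identity (\ref{eq-f-sym}) one has $(\mu_j^A,\lambda)=(\mu_j,\lambda^A)\leqslant(\mu_k,\lambda^A)=(\mu_k^A,\lambda)$ for every $\lambda\in\mathcal E^+$, which forces $U^{\mu_j^A}\leqslant U^{\mu_k^A}$ everywhere (test against $\lambda=m^{(r)}$ and let $r\to0$ as in Corollary~\ref{th-f-contu}). Hence Theorem~\ref{3.9} applies and yields a measure, which I name $\mu^A\in\mathfrak M^+$, with $U^{\mu_k^A}\uparrow U^{\mu^A}$ pointwise and $\mu_k^A\to\mu^A$ vaguely.

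\medskip

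\noindent\textbf{The defining identity and uniqueness.} For fixed $\lambda\in\mathcal E^+$, apply (\ref{eq-f-sym}) to each $\mu_k$: $E(\mu_k,\lambda^A)=E(\mu_k^A,\lambda)$. On the left, $E(\mu_k,\lambda^A)=\int U^{\lambda^A}\,d\mu_k\uparrow\int U^{\lambda^A}\,d\mu=E(\mu,\lambda^A)$ by monotone convergence (note $U^{\lambda^A}$ is a fixed nonnegative l.s.c.\ function and $U^{\mu_k}\uparrow U^\mu$ forces $\mu_k\to\mu$ in the appropriate monotone sense — this step needs the principle that increasing potentials integrate monotonically, which I would justify via \cite[Theorem~3.9]{L} or direct l.s.c.\ monotone convergence). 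On the right, $E(\mu_k^A,\lambda)=\int U^{\mu_k^A}\,d\lambda\uparrow\int U^{\mu^A}\,d\lambda=E(\mu^A,\lambda)$ by monotone convergence since $U^{\mu_k^A}\uparrow U^{\mu^A}$. This establishes (\ref{alternative}). Uniqueness follows exactly as in the second half of Corollary~\ref{th-f-contu}: if $\nu\in\mathfrak M^+$ also satisfies (\ref{alternative}), then $E(\mu^A,\lambda)=E(\nu,\lambda)$ for all $\lambda\in\mathcal E^+$, so taking $\lambda=m^{(r)}$ gives $U^{\mu^A}\ast m^{(r)}=U^\nu\ast m^{(r)}$, and letting $r\to0$ with \cite[Theorem~1.11]{L} yields $\nu=\mu^A$.

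\medskip

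\noindent\textbf{Properties (\ref{eq-bala-f1}) and (\ref{eq-bala-f2}).} The inequality (\ref{eq-bala-f2}), $U^{\mu^A}\leqslant U^\mu$, is immediate on letting $k\to\infty$ in $U^{\mu_k^A}\leqslant U^{\mu_k}\leqslant U^\mu$. For the n.e.\ equality (\ref{eq-bala-f1}) on $A$, I would use that $U^{\mu_k^A}=U^{\mu_k}$ n.e.\ on $A$ (Corollary~\ref{cor-eq-arb}) together with Lemma~\ref{subad}: the exceptional set where equality fails for some $k$ is a countable union of inner-capacity-zero sets, hence of zero inner capacity. Off that set, $U^{\mu^A}=\lim_k U^{\mu_k^A}=\lim_k U^{\mu_k}=U^\mu$, giving $U^{\mu^A}=U^\mu$ n.e.\ on $A$. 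I expect the main obstacle to be the monotone-directedness argument for $(U^{\mu_k^A})$ and the careful passage to the limit in $E(\mu_k,\lambda^A)$: both rely on the fact that inner balayage is order-preserving in $\mu$, which is not literally among the cited results for general $\mu$ and must be bootstrapped from the symmetry identity (\ref{eq-f-sym}) on the finite-energy level before the limit is taken.
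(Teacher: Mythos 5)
Your overall architecture is the same as the paper's: approximate $\mu$ by a sequence $(\mu_k)\subset\mathcal E^+$ with $U^{\mu_k}\uparrow U^\mu$, show that the potentials $U^{\mu_k^A}$ increase and are majorized by $U^\mu$, invoke Theorem~\ref{3.9} to produce the limit measure, verify (\ref{alternative}) by monotone passage to the limit in the symmetry identity, and obtain (\ref{eq-bala-f1}) via Lemma~\ref{subad} and (\ref{eq-bala-f2}) by taking limits. Your monotonicity argument for $U^{\mu_j^A}\leqslant U^{\mu_k^A}$ (via the symmetry identity and smoothing with $m^{(r)}$) is a legitimate variant of the paper's route (which instead uses $U^{\mu_k^A}=U^{\mu_k}$ $\nu$-a.e.\ for $\nu\in\mathcal E_A'$ plus the domination principle); both work.

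However, there is one genuine gap: your justification for the existence of the approximating sequence is wrong. You claim that the restrictions $\mu|_K$, $K$ compact, lie in $\mathcal E^+$ — they do not. For the Riesz kernel with $\alpha<n$ every Dirac measure has infinite energy, so $\varepsilon_y|_K=\varepsilon_y\notin\mathcal E^+$; and $\mu=\varepsilon_y$ is precisely the case the theorem must cover for the later applications to inner $\alpha$-irregularity. The existence of $(\mu_k)\subset\mathcal E^+$ with $U^{\mu_k}\uparrow U^\mu$ is a nontrivial classical fact (the paper cites the construction in Landkof, p.~272, going back to Cartan) and cannot be obtained by truncating the measure; one must in effect truncate the potential. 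A second, smaller point: in the limit passage $E(\mu_k,\lambda^A)\to E(\mu,\lambda^A)$ you integrate the fixed l.s.c.\ function $U^{\lambda^A}$ against the varying measures $\mu_k$ and then appeal to a vague "monotone sense" in which $\mu_k\to\mu$; the measures $\mu_k$ do not increase setwise, so this as stated is unjustified. The clean argument is to swap the order of integration by Tonelli, $E(\mu_k,\lambda^A)=\int U^{\mu_k}\,d\lambda^A$, and apply monotone convergence for the increasing sequence of l.s.c.\ functions $U^{\mu_k}\uparrow U^\mu$ against the fixed measure $\lambda^A$ — which is exactly the paper's use of Bourbaki's integration theorem. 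With these two repairs your proof coincides in substance with the paper's.
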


\begin{proof} Similarly as in \cite[p.~272]{L} (see also \cite[p.~257, footnote]{Ca2}), for $\mu\in\mathfrak M^+$ one can construct a sequence $(\mu_k)_{k\in\mathbb N}\subset\mathcal E^+$ such that
\begin{equation}\label{mon}U^{\mu_k}\uparrow U^\mu\text{ \ pointwise on\ }\mathbb R^n\text{ \ (as $k\to\infty$)}.\end{equation}
According to (\ref{proj-proof}) applied to each of those $\mu_k$,
\[U^{\mu_k^A}=U^{\mu_k}\leqslant U^{\mu_{k+1}}=U^{\mu_{k+1}^A}\text{ \ $\nu$-a.e.\ for every\ }\nu\in\mathcal E_A'.\]
In particular, $U^{\mu_k^A}\leqslant U^{\mu_{k+1}^A}$ $\mu_k^A$-a.e., which implies by the domination principle
\begin{equation}\label{appr-ineq}U^{\mu_k^A}\leqslant U^{\mu_{k+1}^A}\leqslant U^{\mu_{k+1}}\leqslant U^\mu\text{ \ on\ }\mathbb R^n,\end{equation}
the second inequality being valid by (\ref{eq-bala-f2}) (cf.\ Corollary~\ref{cor-eq-arb}).

Thus, $U^{\mu_k^A}$ increases along with $U^{\mu_k}$ (as $k\to\infty$) and does not exceed $U^\mu$. According to Theorem~\ref{3.9}, there exists $\nu\in\mathfrak M^+$ such that
$\mu_k^A\to\nu$ vaguely and
\begin{equation}\label{eq-appr}U^{\mu_k^A}\uparrow U^\nu\text{ \ pointwise on\ }\mathbb R^n\text{ \ (as $k\to\infty$)}.\end{equation}
Having now written (\ref{eq-f-sym}) for every $\mu_k\in\mathcal E^+$, $k\in\mathbb N$, and then applied
\cite[Chapter~IV, Section~1, Theorem~1]{B2}, which is possible in view of (\ref{mon}) and (\ref{eq-appr}), we get (\ref{alternative}) with $\mu^A:=\nu$.
The measure $\mu^A\in\mathfrak M^+$ is thus a required inner balayage of $\mu\in\mathfrak M^+$ to $A$, and its uniqueness follows from (\ref{alternative}) in the same manner as in Corollary~\ref{th-f-contu}.

Relation (\ref{eq-bala-f2}) is obtained directly from (\ref{appr-ineq}) and (\ref{eq-appr}). To prove (\ref{eq-bala-f1}), we observe from (\ref{mon}) and (\ref{eq-appr}) that $E:=\bigl\{x\in\mathbb R^n:\ U^{\mu^A}(x)<U^\mu(x)\bigr\}$ is contained in the union of the (Borel) sets $E_k:=\bigl\{x\in\mathbb R^n:\ U^{\mu_k^A}(x)<U^{\mu_k}(x)\bigr\}$, $k\in\mathbb N$. Since $c_\alpha(A\cap E_k)=0$ according to (\ref{eq-bala-f1}) applied to $\mu_k$ (cf.\ Corollary~\ref{cor-eq-arb}), Lemma~\ref{subad} shows that, indeed, $c_\alpha(A\cap E)=0$.
\end{proof}

\begin{remark}\label{alt-incr}For any sequence\/ {\rm(}net\/{\rm)} $(\mu_k)\subset\mathcal E^+$ such that (\ref{mon}) holds, we thus have
\[\mu_k^A\to\mu^A\text{ \ vaguely},\quad U^{\mu_k^A}\uparrow U^{\mu^A}\text{ \ pointwise on\ }\mathbb R^n,\] and either of these relations may be thought of as an alternative definition of $\mu^A$.\end{remark}

\begin{remark}We show in Theorem~\ref{incr} below that $\mu^K\to\mu^A$ vaguely as $K\uparrow A$, thereby justifying the term `\emph{inner\/} balayage'.\end{remark}

\begin{remark}\label{bal-support}In general, $\mu^A$ \emph{is not carried by\/} $A$, and this is the case even for the Newtonian kernel and an open ball.
What is clear so far is that $\mu^A$ is carried by $\overline{A}$. This will be specified in Theorem~\ref{desc-sup} below, providing a description of $S(\mu^A)$ for $A$ closed and $\mu\in\mathfrak M^+_{A^c}$. Also note that for any $a_1,a_2\in\mathbb R^1_+$ and any $\mu_1,\mu_2\in\mathfrak M^+$,
\begin{equation}\label{lin}(a_1\mu_1+a_2\mu_2)^A=a_1\mu_1^A+a_2\mu_2^A.\end{equation}
\end{remark}

\section{Further properties of inner balayage}\label{further} The following assertion shows that Definition~\ref{def-bal} can alternatively be given in terms of $\theta\in\mathfrak M^+$ in place of $\lambda\in\mathcal E^+$.

\begin{corollary}\label{th-alt2}For\/ $\mu\in\mathfrak M^+$ and\/ $A$ arbitrary,
\begin{equation}\label{alternative2}
E(\mu^A,\theta)=E(\mu,\theta^A)\text{ \ for all\ }\theta\in\mathfrak M^+.
\end{equation}
\end{corollary}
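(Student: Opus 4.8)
The plan is to reduce the claim to the already-established symmetry identity (\ref{alternative}), which holds for every $\lambda\in\mathcal E^+$, by a monotone approximation of the arbitrary measure $\theta\in\mathfrak M^+$ from within $\mathcal E^+$. Concretely, I would first invoke the construction used in the proof of Theorem~\ref{th-bala-2} (cf.\ (\ref{mon})) to produce a sequence $(\theta_k)_{k\in\mathbb N}\subset\mathcal E^+$ with $U^{\theta_k}\uparrow U^\theta$ pointwise on $\mathbb R^n$. Since each $\theta_k$ has finite energy, Definition~\ref{def-bal} (equivalently (\ref{alternative})) applies with $\lambda:=\theta_k$, giving
\[E(\mu,\theta_k^A)=E(\mu^A,\theta_k)\quad\text{for every }k\in\mathbb N.\]
It then remains to pass to the limit $k\to\infty$ on both sides.

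For the right-hand side, I would write $E(\mu^A,\theta_k)=\int U^{\theta_k}\,d\mu^A$ by Tonelli's theorem, which is legitimate since the kernel and all the measures involved are positive. As $U^{\theta_k}\uparrow U^\theta$, the monotone convergence theorem yields $\int U^{\theta_k}\,d\mu^A\uparrow\int U^\theta\,d\mu^A=E(\mu^A,\theta)$, the limit being a well-defined element of $[0,+\infty]$.

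For the left-hand side, the key input is Remark~\ref{alt-incr}: because $(\theta_k)\subset\mathcal E^+$ satisfies $U^{\theta_k}\uparrow U^\theta$, the corresponding inner balayages satisfy $U^{\theta_k^A}\uparrow U^{\theta^A}$ pointwise on $\mathbb R^n$. Writing $E(\mu,\theta_k^A)=\int U^{\theta_k^A}\,d\mu$ and applying monotone convergence once more gives $E(\mu,\theta_k^A)\uparrow\int U^{\theta^A}\,d\mu=E(\mu,\theta^A)$. Combining the two limits with the displayed equality yields $E(\mu^A,\theta)=E(\mu,\theta^A)$, as desired.

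The only genuinely delicate point is the left-hand side: one must know that sweeping commutes with the chosen monotone approximation, i.e.\ that $U^{\theta_k^A}\uparrow U^{\theta^A}$. This is exactly what Remark~\ref{alt-incr} supplies, and since that remark is valid for \emph{any} approximating sequence (net) with increasing potentials, no compatibility issue arises from the particular choice of $(\theta_k)$. Everything else is a routine application of Tonelli's theorem and the monotone convergence theorem in $[0,+\infty]$, so that no integrability hypotheses are needed and the resulting identity holds in $[0,+\infty]$.
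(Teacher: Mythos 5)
Your proposal is correct and follows essentially the same route as the paper: approximate $\theta$ by $(\theta_k)\subset\mathcal E^+$ with $U^{\theta_k}\uparrow U^\theta$, apply the symmetry identity (\ref{alternative}) with $\lambda=\theta_k$, and pass to the limit using Remark~\ref{alt-incr} to get $U^{\theta_k^A}\uparrow U^{\theta^A}$. The only cosmetic difference is that you justify the limit passage via Tonelli and monotone convergence, whereas the paper cites the corresponding Bourbaki theorem on increasing families of lower semicontinuous functions; these amount to the same thing here.
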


\begin{proof}For $\theta\in\mathfrak M^+$, choose $(\theta_k)_{k\in\mathbb N}\subset\mathcal E^+$ so that $U^{\theta_k}\uparrow U^\theta$ pointwise on $\mathbb R^n$. Then
$U^{\theta_k^A}\uparrow U^{\theta^A}$ pointwise on $\mathbb R^n$, cf.\ Remark~\ref{alt-incr}. But, according to (\ref{alternative}) with $\lambda=\theta_k$,
\[E(\mu^A,\theta_k)=E(\mu,\theta_k^A).\]
Letting here $k\to\infty$, we obtain (\ref{alternative2}) by \cite[Chapter~IV, Section~1, Theorem~1]{B2}.\end{proof}

\begin{corollary}\label{rest}For any\/ $\mu\in\mathfrak M^+$ and any\/ $Q\subset A$ with\/ $c_\alpha(Q)>0$,
\begin{equation}\label{ar-rest}\mu^Q=(\mu^A)^Q.\end{equation}
\end{corollary}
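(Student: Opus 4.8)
The plan is to exploit the uniqueness of inner balayage granted by Theorem~\ref{th-bala-2}. Since $\mu^A\in\mathfrak M^+$ and $c_\alpha(Q)>0$, the measure $(\mu^A)^Q$ is well defined; and by Definition~\ref{def-bal}, $\mu^Q$ is the \emph{unique} element of $\mathfrak M^+$ satisfying $E(\mu^Q,\lambda)=E(\mu,\lambda^Q)$ for all $\lambda\in\mathcal E^+$. It therefore suffices to verify that $(\mu^A)^Q$ fulfils this very identity, i.e.\ that $E\bigl((\mu^A)^Q,\lambda\bigr)=E(\mu,\lambda^Q)$ for every $\lambda\in\mathcal E^+$.

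First I would fix $\lambda\in\mathcal E^+$ and record that $\lambda^Q\in\mathcal E^+$ by Theorem~\ref{th-f-cont}; more precisely, $\lambda^Q$ lies in the strong cluster set $\mathcal E_Q'$ of $\mathcal E^+_Q$. The decisive observation is that, since $Q\subset A$, every measure carried by $Q$ is carried by $A$ (the set $A^c\subset Q^c$ is negligible, hence $A$ is measurable, for any such measure), whence $\mathcal E^+_Q\subset\mathcal E^+_A$ and consequently $\mathcal E_Q'\subset\mathcal E_A'$ by monotonicity of the strong cluster set. Thus $\lambda^Q\in\mathcal E_A'$, and Corollary~\ref{place} yields the stabilization identity $(\lambda^Q)^A=\lambda^Q$.

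With this in hand the computation is a two-fold application of the symmetry identity. Applying Definition~\ref{def-bal} to the measure $\mu^A$ and the set $Q$ (test measure $\lambda$) gives $E\bigl((\mu^A)^Q,\lambda\bigr)=E(\mu^A,\lambda^Q)$; applying it to $\mu$ and $A$ (test measure $\lambda^Q\in\mathcal E^+$) gives $E\bigl(\mu,(\lambda^Q)^A\bigr)=E(\mu^A,\lambda^Q)$. Substituting $(\lambda^Q)^A=\lambda^Q$ and chaining the two equalities produces $E\bigl((\mu^A)^Q,\lambda\bigr)=E(\mu,\lambda^Q)$ for all $\lambda\in\mathcal E^+$, which is exactly the defining relation of $\mu^Q$; the uniqueness in Theorem~\ref{th-bala-2} then closes the argument.

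I expect the only genuine subtlety to be the stabilization step $(\lambda^Q)^A=\lambda^Q$: it is not a formal triviality but rests on recognizing $\lambda^Q$ as an element of the strong cluster set $\mathcal E_A'$ and invoking Corollary~\ref{place}. Everything else is bookkeeping with the symmetry identity, and one should check at the outset that the hypothesis $c_\alpha(Q)>0$ (together with the standing assumption $c_\alpha(A)>0$) legitimises forming $\mu^Q$, $\lambda^Q$, and $(\mu^A)^Q$ in the first place.
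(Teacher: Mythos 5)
Your argument is correct and is essentially the paper's own proof: the same chain $E((\mu^A)^Q,\lambda)=E(\mu^A,\lambda^Q)=E(\mu,(\lambda^Q)^A)=E(\mu,\lambda^Q)$, with the stabilization $(\lambda^Q)^A=\lambda^Q$ justified exactly as in the paper via $\lambda^Q\in\mathcal E_Q'\subset\mathcal E_A'$ and Corollary~\ref{place}. The only cosmetic difference is that you invoke the uniqueness of the measure satisfying the symmetry identity as a black box, whereas the paper re-runs the underlying argument (testing against $m^{(r)}$ and letting $r\downarrow0$); these are the same step.
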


\begin{proof} For every $\lambda\in\mathcal E^+$, we obtain from (\ref{alternative})
\[E((\mu^A)^Q,\lambda)=E(\mu^A,\lambda^Q)=E(\mu,(\lambda^Q)^A)=E(\mu,\lambda^Q)=E(\mu^Q,\lambda),\]
the equality $(\lambda^Q)^A=\lambda^Q$ being valid by Corollary~\ref{place} applied to $\lambda^Q\in\mathcal E_Q'\subset\mathcal E_A'$.
Taking here $\lambda=m^{(r)}$ and letting $r\downarrow0$ gives (\ref{ar-rest}) (cf.\ the proof of Corollary~\ref{th-f-contu}).
\end{proof}

\begin{theorem}[{\rm Characteristic property}]\label{l-char} For\/ $\mu\in\mathfrak M^+$ and\/ $A$ arbitrary, $U^{\mu^A}$ can be characterized uniquely by the extremal property
\[U^{\mu^A}=\min_{\xi\in\Xi_A}\,U^\xi,\] where\/ $\Xi_A$ consists of all\/ $\xi\in\mathfrak M^+$ with
\begin{equation}\label{eq-char}U^\xi\geqslant U^\mu\text{ \ n.e.\ on\ }A.\end{equation}
\end{theorem}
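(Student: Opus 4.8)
The plan is to establish the extremal identity through two complementary inclusions: first that $\mu^A$ itself belongs to $\Xi_A$, so that the prospective minimum is at most $U^{\mu^A}$; and second that $U^\xi\geqslant U^{\mu^A}$ holds pointwise on all of $\mathbb R^n$ for every competitor $\xi\in\Xi_A$, so that $U^{\mu^A}$ is a lower bound for the whole family $(U^\xi)_{\xi\in\Xi_A}$. Together these force the infimum to be attained at $\xi=\mu^A$ and to equal $U^{\mu^A}$. The first point is immediate from Theorem~\ref{th-bala-2}: relation (\ref{eq-bala-f1}) gives $U^{\mu^A}=U^\mu$, in particular $U^{\mu^A}\geqslant U^\mu$, n.e.\ on $A$, which is precisely condition (\ref{eq-char}) for $\xi=\mu^A$.

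The substance lies in the second point, and the main obstacle is that membership in $\Xi_A$ only supplies $U^\xi\geqslant U^\mu$ n.e.\ on $A$, whereas I need a pointwise comparison with $U^{\mu^A}$ on all of $\mathbb R^n$. The natural device for upgrading an a.e.\ inequality to one holding everywhere is the domination principle (Theorem~\ref{dom}), but it applies only to a measure of finite energy carried where the potential is controlled. I would therefore avoid comparing $\xi$ with $\mu^A$ directly---for general $\mu$ the measure $\mu^A$ need not have finite energy, and it is carried by $\overline{A}$ rather than by $A$---and instead route the estimate through the compact exhaustion. Fix $\xi\in\Xi_A$, choose $(\mu_k)\subset\mathcal E^+$ with $U^{\mu_k}\uparrow U^\mu$ as in (\ref{mon}), and for each $k$ and each $K\in\mathfrak C_A$ consider $\mu_k^K\in\mathcal E^+_K$ furnished by Theorem~\ref{th-bala-f}.

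On $K\subset A$ one has $U^\xi\geqslant U^\mu\geqslant U^{\mu_k}$ n.e., while (\ref{eq-bala-f1}) gives $U^{\mu_k^K}=U^{\mu_k}$ n.e.\ on $K$; hence $U^\xi\geqslant U^{\mu_k^K}$ n.e.\ on $K$, and therefore $\mu_k^K$-a.e., since $\mu_k^K$ has finite energy and is thus absolutely continuous and carried by $K$. The domination principle (with $q=0$) then yields $U^{\mu_k^K}\leqslant U^\xi$ on all of $\mathbb R^n$. Letting $K\uparrow A$ and invoking the monotone convergence $U^{\mu_k^K}\uparrow U^{\mu_k^A}$ of Corollary~\ref{cor-eq-arb} gives $U^{\mu_k^A}\leqslant U^\xi$ everywhere; letting $k\to\infty$ and invoking $U^{\mu_k^A}\uparrow U^{\mu^A}$ from Remark~\ref{alt-incr} gives $U^{\mu^A}\leqslant U^\xi$ everywhere, as required.

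Combining the two points shows $U^{\mu^A}=\min_{\xi\in\Xi_A}U^\xi$, with the minimum attained at $\mu^A$. The uniqueness clause is then automatic: the right-hand side is constructed solely from $\mu$ and $A$ through the defining condition (\ref{eq-char}), so it pins down the function $U^{\mu^A}$ completely; and since a measure in $\mathfrak M^+$ is determined by its potential (as exploited in Corollary~\ref{th-f-contu} via \cite[Theorem~1.11]{L}), this in turn determines $\mu^A$ itself.
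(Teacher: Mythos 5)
Your proposal is correct and takes essentially the same approach as the paper: both establish $\mu^A\in\Xi_A$ via (\ref{eq-bala-f1}) and then obtain the pointwise bound $U^{\mu^A}\leqslant U^\xi$ by comparing $\xi$ with the finite-energy measures $\mu_k^K$ on $K$, applying the domination principle, and passing to the two monotone limits $K\uparrow A$ and $k\to\infty$. The only cosmetic difference is that you get $U^{\mu_k^K}\leqslant U^\xi$ n.e.\ on $K$ from (\ref{eq-bala-f1}), whereas the paper routes it through (\ref{eq-bala-f2}).
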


\begin{proof}Since $\mu^A\in\Xi_A$ by (\ref{eq-bala-f1}) (cf.\ Theorem~\ref{th-bala-2}), it is enough to establish
\begin{equation}\label{eq-char1}U^{\mu^A}\leqslant U^\xi\text{ \ on\ }\mathbb R^n,\end{equation}
where $\xi\in\Xi_A$ is fixed. As $U^{\mu^A}$ is the pointwise limit of an increasing sequence $(U^{\mu_k^A})_{k\in\mathbb N}$ with $(\mu_k)_{k\in\mathbb N}\subset\mathcal E^+$ suitably chosen (cf.\ Remark~\ref{alt-incr}), it suffices to verify (\ref{eq-char1}) for $\mu\in\mathcal E^+$.
By (\ref{eq-bala-f2}) applied to $K\in\mathfrak C_A$, we have $U^{\mu^K}\leqslant U^\mu$ on $\mathbb R^n$, which together with (\ref{eq-char}) shows that the inequality
\[U^{\mu^K}\leqslant U^\xi\] holds n.e.\ on $K$, hence $\mu^K$-a.e.\ because $\mu^K\in\mathcal E^+_K$, and, consequently, on all of $\mathbb R^n$, by the domination principle. On account of Corollary~\ref{cor-eq-arb}, letting here $K\uparrow A$ results in (\ref{eq-char1}) as required.\end{proof}

\begin{corollary}\label{cor-red}For\/ $\mu\in\mathfrak M^+$ and\/ $A$ arbitrary,
\begin{equation}\label{redbal}\mu^A=\mu^{A'}\end{equation}
whenever\/ $(A\setminus A')\cup(A'\setminus A)$ is of inner capacity zero.
\end{corollary}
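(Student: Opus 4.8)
The plan is to reduce the statement to the defining symmetry identity (\ref{alternative}) and show that the hypothesis forces the two balayages to produce the same value of $E(\mu^A,\lambda)$ for every $\lambda\in\mathcal E^+$. Set $N:=(A\setminus A')\cup(A'\setminus A)$, so that $c_\alpha(N)=0$ by assumption. By Definition~\ref{def-bal}, it suffices to prove that $E(\mu,\lambda^A)=E(\mu,\lambda^{A'})$ for all $\lambda\in\mathcal E^+$, since then $\mu^A$ and $\mu^{A'}$ satisfy the same identity (\ref{alternative}) and the uniqueness asserted in Theorem~\ref{th-bala-2} yields (\ref{redbal}). The core of the argument is therefore to establish that the inner balayages \emph{of the test measure} $\lambda\in\mathcal E^+$ agree, i.e.\ $\lambda^A=\lambda^{A'}$ whenever $A$ and $A'$ differ by a set of inner capacity zero.

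First I would reduce to the case $\lambda\in\mathcal E^+$ via Theorem~\ref{th-f-cont}, where $\lambda^A$ is the orthogonal projection onto $\mathcal E_A'$, characterized by $U^{\lambda^A}=U^\lambda$ n.e.\ on $A$ together with $U^{\lambda^A}\le U^\lambda$ on $\mathbb R^n$ (Corollary~\ref{cor-eq-arb}), and determined uniquely within $\mathcal E_A'$ by the n.e.\ equality. The strategy is to show these two characterizing relations are insensitive to modifying $A$ on the capacity-zero set $N$. Indeed, since $c_\alpha(N)=0$, the sets $A$ and $A'$ agree n.e.: if $U^{\lambda^A}=U^\lambda$ n.e.\ on $A$, then because the exceptional set for this equality together with $N$ still has zero inner capacity (by subadditivity, Lemma~\ref{subad}), we get $U^{\lambda^A}=U^\lambda$ n.e.\ on $A'$ as well. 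Combined with $U^{\lambda^A}\le U^\lambda$ on all of $\mathbb R^n$, this shows that $\lambda^A$ satisfies the characterizing properties of $\lambda^{A'}$. To invoke the uniqueness clause I must also check $\lambda^A\in\mathcal E_{A'}'$. Here I would use that n.e.\ coincidence of $A$ and $A'$ forces the strong cluster sets to coincide, $\mathcal E_A'=\mathcal E_{A'}'$: any $\nu\in\mathcal E^+_K$ with $K\subset A$ compact of positive capacity can be approximated strongly by measures carried by compact subsets of $A'$, because removing or adding a capacity-zero set does not change which compacts carry energy-finite measures (cf.\ Lemma~\ref{2.3.1}, since $\mathcal E^+_{K\cap N}=\{0\}$). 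The uniqueness part of Theorem~\ref{th-f-cont} then yields $\lambda^A=\lambda^{A'}$.

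The main obstacle I anticipate is the justification that $\mathcal E_A'=\mathcal E_{A'}'$, or more precisely that $\lambda^A$ genuinely lies in $\mathcal E_{A'}'$ so that the uniqueness within $\mathcal E_{A'}'$ can be applied. The subtlety is that a compact $K\subset A$ need not be a subset of $A'$, but only differs from a subset of $A'$ by part of $N$; one must argue that $\lambda^K=(\lambda^K)^{K\setminus N}$ or, equivalently, that sweeping onto $K$ and onto $K\setminus N$ give the same measure, which follows from Corollary~\ref{rest} applied with $Q=K\setminus N$ once one knows $c_\alpha(K\setminus N)=c_\alpha(K)$ (again a consequence of $c_\alpha(N)=0$). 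With that identification in hand, the strong limit defining $\lambda^A$ in (\ref{eq-f-cont'}) can be rewritten as a limit over compacts contained in $A'$, placing $\lambda^A$ in $\mathcal E_{A'}'$ and closing the argument.

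Once $\lambda^A=\lambda^{A'}$ is established for all $\lambda\in\mathcal E^+$, I would conclude as follows: for arbitrary $\mu\in\mathfrak M^+$, (\ref{alternative}) gives $E(\mu^A,\lambda)=E(\mu,\lambda^A)=E(\mu,\lambda^{A'})=E(\mu^{A'},\lambda)$ for every $\lambda\in\mathcal E^+$, and the uniqueness asserted in Theorem~\ref{th-bala-2} (equivalently, the convolution-and-limit argument of Corollary~\ref{th-f-contu}) forces $\mu^A=\mu^{A'}$. This is the same passage from the symmetry identity to equality of measures already used repeatedly above, so no new difficulty arises in this final step.
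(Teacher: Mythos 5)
Your overall architecture --- reduce to $\lambda^A=\lambda^{A'}$ for test measures $\lambda\in\mathcal E^+$, then transfer to arbitrary $\mu$ via the symmetry identity (\ref{alternative}) and the convolution argument of Corollary~\ref{th-f-contu} --- is sound at both ends, but the middle step, the one you yourself flag as the main obstacle, does not go through as written. First, the appeal to Corollary~\ref{rest} is circular: with $Q=K\setminus N$ that corollary gives $\lambda^{K\setminus N}=(\lambda^K)^{K\setminus N}$, not $\lambda^K=(\lambda^K)^{K\setminus N}$; to pass from the former to the latter you would need $\lambda^K\in\mathcal E_{K\setminus N}'$ (so that Corollary~\ref{place} applies), which is precisely the membership you are trying to establish, and the identity $\lambda^K=\lambda^{K\setminus N}$ is itself an instance of the corollary being proved. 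Second, and more fundamentally, the claim $\mathcal E_A'=\mathcal E_{A'}'$ is not delivered by $c_\alpha(N)=0$ together with Lemma~\ref{2.3.1}. A set of inner capacity zero has $\nu$-\emph{inner} measure zero for every $\nu\in\mathcal E^+$ (each of its compact subsets is $\nu$-null), but it need not be $\nu$-negligible, since its outer measure can be positive; consequently a measure $\nu\in\mathcal E^+_K$ with $K\in\mathfrak C_A$ need not be recoverable as a strong limit of its restrictions to compact subsets of $K\cap A'$. Knowing $\mathcal E^+_{K\cap N}=\{0\}$ does not let you push the mass of $\nu$ off $K\setminus A'$ and onto compact subsets of $A'$.

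The paper sidesteps all of this by comparing potentials rather than cones. Since $U^{\mu^{A'}}=U^\mu$ n.e.\ on $A'$ and the symmetric difference has inner capacity zero, the same equality holds n.e.\ on $A$, so $\mu^{A'}$ belongs to the class $\Xi_A$ of Theorem~\ref{l-char}; the extremal property then gives $U^{\mu^A}\leqslant U^{\mu^{A'}}$ on $\mathbb R^n$, and interchanging $A$ and $A'$ yields equality of the potentials, hence of the measures. This is three lines, applies directly to every $\mu\in\mathfrak M^+$ with no reduction to $\mathcal E^+$, and uses only the n.e.\ relation, which is exactly what the hypothesis $c_\alpha\bigl((A\setminus A')\cup(A'\setminus A)\bigr)=0$ controls. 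If you wish to salvage your route, the cleanest repair is to prove $\lambda^A=\lambda^{A'}$ by this same potential comparison (Theorem~\ref{l-char} applied to $\lambda$) --- at which point your reduction to test measures becomes an unnecessary detour.
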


\begin{proof}Indeed, $U^{\mu^{A'}}=U^\mu$ n.e.\ on $A'$, hence n.e.\ on $A$. Therefore, by Theorem~\ref{l-char}, $U^{\mu^A}\leqslant
U^{\mu^{A'}}$ on $\mathbb R^n$. Since the same holds with $A$ and $A'$ reversed, (\ref{redbal}) follows.\end{proof}

\begin{theorem}\label{incr}For\/ $\mu\in\mathfrak M^+$ and\/ $A$ arbitrary, $U^{\mu^K}\uparrow U^{\mu^A}$ pointwise on\/ $\mathbb R^n$ and\/ $\mu^K\to\mu^A$ vaguely as\/ $K\uparrow A$. If, moreover, $E(\mu)<\infty$, then\/ $\mu^K\to\mu^A$ also strongly.\end{theorem}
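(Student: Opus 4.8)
The plan is to treat the two assertions separately, dispatching the finite-energy case by appeal to Step~2 and concentrating on the general case $\mu\in\mathfrak M^+$.

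If $E(\mu)<\infty$, then all three conclusions are already contained in Step~2: Theorem~\ref{th-f-cont} gives $\mu^K\to\mu^A$ both strongly and vaguely as $K\uparrow A$ (this is exactly (\ref{eq-f-cont'})), while Corollary~\ref{cor-eq-arb} gives $U^{\mu^K}\uparrow U^{\mu^A}$ pointwise on $\mathbb R^n$. So nothing remains to prove in that case, and the role of the hypothesis $E(\mu)<\infty$ is precisely to license the strong convergence, which one cannot expect in general.

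For arbitrary $\mu\in\mathfrak M^+$, the first step is to show that the net $(U^{\mu^K})_{K\in\mathfrak C_A}$ is increasing and majorized by $U^\mu$. Monotonicity follows from the restriction property: for $K\subset K'$ in $\mathfrak C_A$ with $c_\alpha(K)>0$, Corollary~\ref{rest} (applied with the ambient set $K'$ and $Q=K$) gives $\mu^K=(\mu^{K'})^K$, whence $U^{\mu^K}=U^{(\mu^{K'})^K}\leqslant U^{\mu^{K'}}$ by (\ref{eq-bala-f2}); the compacta of zero capacity contribute $\mu^K=0$ and are harmless. The bound $U^{\mu^K}\leqslant U^\mu$ is again (\ref{eq-bala-f2}). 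Being upper directed and majorized by $U^\mu$ with $\mu\in\mathfrak M^+$, the family $(U^{\mu^K})$ falls under Theorem~\ref{3.9}, which furnishes a measure $\nu\in\mathfrak M^+$ with $U^{\mu^K}\uparrow U^\nu$ pointwise on $\mathbb R^n$ and $\mu^K\to\nu$ vaguely. It then remains to identify $\nu$ with $\mu^A$, which I would do through the defining symmetry identity. For fixed $\lambda\in\mathcal E^+$ and each $K\in\mathfrak C_A$, Theorem~\ref{th-bala-2} applied to the compact set $K$ yields $E(\mu^K,\lambda)=E(\mu,\lambda^K)$, i.e.\ $\int U^{\mu^K}\,d\lambda=\int U^{\lambda^K}\,d\mu$. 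Letting $K\uparrow A$, the left-hand side tends to $\int U^\nu\,d\lambda=E(\nu,\lambda)$ by monotone convergence (using $U^{\mu^K}\uparrow U^\nu$), while on the right $U^{\lambda^K}\uparrow U^{\lambda^A}$ pointwise by Corollary~\ref{cor-eq-arb} (here $\lambda\in\mathcal E^+$), so that $\int U^{\lambda^K}\,d\mu\uparrow\int U^{\lambda^A}\,d\mu=E(\mu,\lambda^A)$, again by monotone convergence. Hence $E(\nu,\lambda)=E(\mu,\lambda^A)$ for every $\lambda\in\mathcal E^+$; that is, $\nu$ satisfies (\ref{alternative}), and the uniqueness part of Theorem~\ref{th-bala-2} (via the convolution argument of Corollary~\ref{th-f-contu}, taking $\lambda=m^{(r)}$ and letting $r\downarrow0$) forces $\nu=\mu^A$. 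This simultaneously gives $U^{\mu^K}\uparrow U^{\mu^A}$ and $\mu^K\to\mu^A$ vaguely.

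The main obstacle is the correct passage to the limit in the symmetry identity: both sides must be handled by monotone convergence, and the right-hand side requires knowing a priori that $U^{\lambda^K}\uparrow U^{\lambda^A}$ for $\lambda$ of finite energy, which is exactly the content of Corollary~\ref{cor-eq-arb}. Everything else — the monotonicity of the net and the concluding uniqueness step — is a routine reuse of the restriction property (Corollary~\ref{rest}) and the convolution argument already established.
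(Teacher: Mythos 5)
Your proposal is correct and follows essentially the same route as the paper: reduce the finite-energy case to Step~2, use Corollary~\ref{rest} and (\ref{eq-bala-f2}) to show the net $(U^{\mu^K})$ is increasing and majorized by $U^\mu$, invoke Theorem~\ref{3.9}, and identify the limit with $\mu^A$ by passing to the limit in the symmetry identity $E(\mu^K,\lambda)=E(\mu,\lambda^K)$ via Corollary~\ref{cor-eq-arb}. The only cosmetic difference is that the paper cites the Bourbaki monotone-convergence theorem for increasing nets of l.s.c.\ functions where you say ``monotone convergence.''
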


\begin{proof} For $\mu\in\mathcal E^+$, this has already been established by Theorem~\ref{th-bala-f} and Corollary~\ref{cor-eq-arb}. It thus remains to prove the former assertion  for $\mu\in\mathfrak M^+\setminus\mathcal E^+$.

According to (\ref{ar-rest}), for any $K,K'\in\mathfrak C_A$ such that $K\subset K'$, we have $\mu^K=(\mu^{K'})^K$. In view of (\ref{eq-bala-f2}), this implies that the net $(U^{\mu^K})_{K\in\mathfrak C_A}$ is increasing and majorized by $U^\mu$. By Theorem~\ref{3.9}, there is therefore $\nu\in\mathfrak M^+$ such that
$U^{\mu^K}\uparrow U^\nu$ pointwise on $\mathbb R^n$ and $\mu^K\to\nu$ vaguely as $K\uparrow A$.
The proof is completed by showing that $\nu=\mu^A$, or equivalently
\[E(\nu,\lambda)=E(\mu,\lambda^A)\]
for any given $\lambda\in\mathcal E^+$ (cf.\ Definition~\ref{def-bal}). Indeed, according to (\ref{alternative}) applied to $K$,
\[E(\mu^K,\lambda)=E(\mu,\lambda^K)\text{ \ for every\ }K\in\mathfrak C_A,\]
while $U^{\lambda^A}$ is the pointwise limit of the increasing net $(U^{\lambda^K})_{K\in\mathfrak C_A}$ (see Corollary~\ref{cor-eq-arb}).
Letting $K\uparrow A$, we obtain the required identity by \cite[Chapter~IV, Section~1, Theorem~1]{B2}.\end{proof}

\begin{corollary}\label{cor-decr}Fix\/ $\mu\in\mathcal E^+$ and\/ $A\subset\mathbb R^n$. For every\/ $\varepsilon>0$, there exists\/ $K_0\in\mathfrak C_A$ with the property
$\|\mu^A-\mu^K\|<\varepsilon$ for all\/ $K\in\mathfrak C_A$ such that\/ $K\supset K_0$.
\end{corollary}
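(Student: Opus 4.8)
The plan is to observe that this corollary is nothing more than the quantitative, net-theoretic reformulation of the strong convergence $\mu^K\to\mu^A$ already furnished by (\ref{eq-f-cont'}) in Theorem~\ref{th-f-cont}, so the substantive work has been done and only an explicit estimate remains. The key is to produce a single inequality bounding $\|\mu^A-\mu^K\|$ by a quantity that is manifestly small once $K$ is large enough in $\mathfrak C_A$.

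First I would recall from the proof of Theorem~\ref{th-f-cont} that the net $(\|\mu-\mu^K\|)_{K\in\mathfrak C_A}$ is decreasing and, by (\ref{stream}) together with (\ref{proj'}), converges to $\|\mu-\mu^A\|=\varrho(\mu,\mathcal E_A')$. To relate $\|\mu^A-\mu^K\|$ to this tail, I would apply Lemma~4.1.1 in \cite{F1} exactly as in the proof of Theorem~\ref{th-f-cont}, but now with $\mathcal H:=\mathcal E$, $\Gamma:=\{\mu-\nu:\ \nu\in\mathcal E_A'\}$, and $\lambda:=\mu-\mu^A$. Since $\mu^A$ is the orthogonal projection of $\mu$ onto $\mathcal E_A'$ (cf.\ (\ref{proj'})), the element $\lambda=\mu-\mu^A$ is the minimal-norm point of the convex set $\Gamma$; and since $K\subset A$ gives $\mathcal E_K^+\subset\mathcal E_A'$, the measure $\mu^K$ yields a point $\mu-\mu^K\in\Gamma$. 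The lemma then delivers
\[\|\mu^A-\mu^K\|^2\leqslant\|\mu-\mu^K\|^2-\|\mu-\mu^A\|^2.\]

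With this inequality in hand, the conclusion is immediate. Given $\varepsilon>0$, I would choose $K_0\in\mathfrak C_A$ so that $\|\mu-\mu^{K_0}\|^2<\|\mu-\mu^A\|^2+\varepsilon^2$, which is possible because the net $(\|\mu-\mu^K\|)_{K\in\mathfrak C_A}$ decreases to $\|\mu-\mu^A\|$. For every $K\in\mathfrak C_A$ with $K\supset K_0$, monotonicity of that net gives $\|\mu-\mu^K\|\leqslant\|\mu-\mu^{K_0}\|$, and substituting into the displayed estimate yields $\|\mu^A-\mu^K\|^2<\varepsilon^2$, as required. I do not expect any genuine obstacle here: the only points demanding care are the correct identification of $\lambda=\mu-\mu^A$ as the minimal-norm element of $\Gamma$ when invoking Lemma~4.1.1, and the inclusion $\mathcal E_K^+\subset\mathcal E_A'$ that places $\mu-\mu^K$ inside $\Gamma$; both are routine consequences of earlier results. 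Alternatively, one could bypass the explicit estimate entirely and simply read off the statement as the definition of net convergence in the strong topology, already secured by (\ref{eq-f-cont'}).
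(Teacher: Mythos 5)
Your proposal is correct and matches the paper, which gives no separate proof of Corollary~\ref{cor-decr}: the statement is just the $\varepsilon$--$K_0$ reformulation of the strong net convergence $\mu^K\to\mu^A$ already secured by Theorem~\ref{th-f-cont} (equivalently Theorem~\ref{incr} for $\mu\in\mathcal E^+$), which is exactly the alternative you mention at the end. Your explicit estimate $\|\mu^A-\mu^K\|^2\leqslant\|\mu-\mu^K\|^2-\|\mu-\mu^A\|^2$ via \cite[Lemma~4.1.1]{F1} is a valid quantitative unwinding of the same Cauchy computation used in the proof of Theorem~\ref{th-f-cont}, with the inclusion $\mathcal E^+_K\subset\mathcal E^+_A\subset\mathcal E_A'$ and the projection characterization (\ref{proj'}) correctly identified as the only points needing care.
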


\begin{theorem}\label{incr'}For\/ $A$ Borel, Theorem\/~{\rm\ref{incr}} and Corollary\/~{\rm\ref{cor-decr}} remain valid if\/ $\mathfrak C_A$ is replaced by an increasing sequence\/ $(A_k)_{k\in\mathbb N}$ of Borel sets, whose union equals\/ $A$.\end{theorem}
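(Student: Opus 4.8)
The plan is to reduce the Borel case with an exhausting sequence $(A_k)$ to the already-established compact case by following the same architecture as the proof of Theorem~\ref{incr}, with $\mathfrak C_A$ replaced throughout by the sequence $(A_k)_{k\in\mathbb N}$. The only genuinely new ingredient is that Corollary~\ref{rest} must be available along the sequence rather than along $\mathfrak C_A$; concretely, I first observe that for $j\leqslant k$ we have $\mu^{A_j}=(\mu^{A_k})^{A_j}$, since each $A_j\subset A_k$ and $c_\alpha(A_j)>0$ may be assumed (capacity increases to $c_\alpha(A)>0$ along $(A_k)$ by \eqref{cap-incr} combined with Lemma~\ref{subad}, so we discard the finitely many initial terms of zero capacity). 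Granting this nesting identity, \eqref{eq-bala-f2} forces $U^{\mu^{A_j}}\leqslant U^{\mu^{A_k}}\leqslant U^\mu$, so the sequence $(U^{\mu^{A_k}})_{k\in\mathbb N}$ is increasing and dominated by $U^\mu$. Theorem~\ref{3.9} then furnishes $\nu\in\mathfrak M^+$ with $U^{\mu^{A_k}}\uparrow U^\nu$ pointwise and $\mu^{A_k}\to\nu$ vaguely.

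It remains to identify $\nu$ with $\mu^A$, which by Definition~\ref{def-bal} amounts to verifying $E(\nu,\lambda)=E(\mu,\lambda^A)$ for each $\lambda\in\mathcal E^+$. Writing \eqref{alternative} for each $A_k$ gives $E(\mu^{A_k},\lambda)=E(\mu,\lambda^{A_k})$, so I pass to the limit on both sides. On the left, $U^{\mu^{A_k}}\uparrow U^\nu$ lets me invoke \cite[Chapter~IV, Section~1, Theorem~1]{B2} to get $E(\mu^{A_k},\lambda)\to E(\nu,\lambda)$. On the right, I need $U^{\lambda^{A_k}}\uparrow U^{\lambda^A}$ pointwise; this is precisely the content of Remark~\ref{Borel}, which extends Corollary~\ref{cor-eq-arb} to Borel exhaustions, so another application of the same monotone-convergence theorem yields $E(\mu,\lambda^{A_k})\to E(\mu,\lambda^A)$. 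Equating the two limits gives the required symmetry identity, hence $\nu=\mu^A$ by uniqueness in Theorem~\ref{th-bala-2}. This simultaneously establishes the analogue of Theorem~\ref{incr} for the sequence.

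For the strong-convergence refinement (the $E(\mu)<\infty$ case of Theorem~\ref{incr}) and for Corollary~\ref{cor-decr}, I would again transcribe the compact argument, now leaning on Remark~\ref{Borel}, which already asserts that Theorem~\ref{th-f-cont} holds with $\mathfrak C_A$ replaced by $(A_k)$. In particular the monotonicity estimate $\|\mu-\mu^{A_{k+1}}\|\leqslant\|\mu-\mu^{A_k}\|$ together with Lemma~4.1.1 in \cite{F1} shows $(\mu^{A_k})$ is strong Cauchy, and perfectness of the Riesz kernel identifies the strong limit with the vague limit $\mu^A$; the $\varepsilon$-tail statement of Corollary~\ref{cor-decr} is then immediate from strong Cauchyness.

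The step I expect to require the most care is the nesting identity $\mu^{A_j}=(\mu^{A_k})^{A_j}$ for Borel sets along the sequence, since Corollary~\ref{rest} is stated for a single pair $Q\subset A$ and its proof uses the symmetry identity \eqref{alternative} together with Corollary~\ref{place}; I must check that Corollary~\ref{place}, i.e.\ $(\lambda^{A_j})^{A_k}=\lambda^{A_j}$, continues to hold in the Borel setting, which follows because $\lambda^{A_j}\in\mathcal E'_{A_j}\subset\mathcal E'_{A_k}$ and \eqref{proj'} is insensitive to whether $A_k$ is approached by compacts or by the subsequence of the $A_i$. Apart from this bookkeeping, every limiting passage reuses \cite[Chapter~IV, Section~1, Theorem~1]{B2} exactly as in Theorem~\ref{incr}, so no new analytic difficulty arises.
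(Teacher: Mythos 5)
Your proposal is correct and follows essentially the same route as the paper, whose proof consists of citing Remark~\ref{Borel} for the key fact $U^{\lambda^{A_k}}\uparrow U^{\lambda^A}$ ($\lambda\in\mathcal E^+$) and then declaring that the argument of Theorem~\ref{incr} and Corollary~\ref{cor-decr} runs unchanged; you have simply written out that transcription. Your worry about re-verifying Corollary~\ref{place} is unnecessary, since the inner balayage $\mu^{A_k}$ is a single set-intrinsic measure (Definition~\ref{def-bal}, Theorem~\ref{th-bala-2}) to which Corollaries~\ref{rest} and \ref{place} apply directly, independently of how $A_k$ is exhausted.
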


\begin{proof}Since for every $\lambda\in\mathcal E^+$, $U^{\lambda^{A_k}}\uparrow U^{\lambda^A}$ pointwise on $\mathbb R^n$ as $k\to\infty$ (see Remark~\ref{Borel}), the proof runs as above.\end{proof}

The following result can certainly be extended to a general perfect kernel on a locally compact space, which is, however, outside the scope of this study.

\begin{theorem}\label{th-decr}Assume\/ $A$ is the intersection of a lower directed family\/ $(A_t)_{t\in T}$ of closed sets. For any\/ $\mu\in\mathcal E^+$,
$\mu^{A_t}\to\mu^A$ strongly and vaguely.
\end{theorem}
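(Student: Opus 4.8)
The plan is to reproduce the construction of $\mu^A$ from Theorem~\ref{th-f-cont}, but with the upper directed family $\mathfrak C_A$ replaced by the lower directed family $(A_t)_{t\in T}$, and then to identify the limit with $\mu^A$ by a minimal-distance argument. Observe first that $A=\bigcap_t A_t$ is closed and $A\subset A_t$ for every $t$, so $\mathcal E^+_A\subset\mathcal E^+_{A_t}$; since each $A_t$ is closed, all the cones $\mathcal E^+_{A_t}$ and $\mathcal E^+_A$ are strongly closed, and $\mu^{A_t}$, $\mu^A$ are the corresponding orthogonal projections of $\mu$ (Theorem~\ref{th-bala-f}). First I would show that $(\mu^{A_t})_{t\in T}$ is strong Cauchy. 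Whenever $A_{t'}\subset A_t$ we have $\mu^{A_{t'}}=(\mu^{A_t})^{A_{t'}}$ by Corollary~\ref{rest-fin}, and applying \cite[Lemma~4.1.1]{F1} exactly as in the proof of Theorem~\ref{th-f-cont} (with $\mathcal H:=\mathcal E$, $\Gamma:=\{\mu-\nu:\ \nu\in\mathcal E^+_{A_t}\}$, $\lambda:=\mu-\mu^{A_t}$) yields
\[\|\mu^{A_t}-\mu^{A_{t'}}\|^2\leqslant\|\mu-\mu^{A_{t'}}\|^2-\|\mu-\mu^{A_t}\|^2.\]
The net $\|\mu-\mu^{A_t}\|=\varrho(\mu,\mathcal E^+_{A_t})$ is increasing (the cones shrink as $t$ grows) and bounded above by $\varrho(\mu,\mathcal E^+_A)=\|\mu-\mu^A\|$, hence Cauchy in $\mathbb R$; the displayed inequality then forces $(\mu^{A_t})$ to be strong Cauchy. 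By perfectness, strong (hence vague, \cite[Lemma~2.5.1]{F1}) boundedness, the existence of a vague cluster point \cite[Chapter~III, Section~2, Proposition~9]{B2}, and vague lower semicontinuity of the energy, $(\mu^{A_t})$ then converges both strongly and vaguely to a unique $\nu_0\in\mathcal E^+$, precisely as in Theorem~\ref{th-f-cont}.

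The decisive step is to check that $\nu_0$ is carried by $A$. Fix $f\in C_0(\mathbb R^n)$ with $\operatorname{supp}f\cap A=\varnothing$. The sets $\operatorname{supp}f\cap A_t$ form a lower directed family of compact sets with empty intersection, so by the finite intersection property together with lower directedness one has $\operatorname{supp}f\cap A_{t_0}=\varnothing$ for some $t_0$. Since $\mu^{A_t}$ is carried by $A_t\subset A_{t_0}$ for all $t$ with $A_t\subset A_{t_0}$, we get $\int f\,d\mu^{A_t}=0$ there, whence $\int f\,d\nu_0=0$ by vague convergence. As $f$ ranges over all such functions, $\nu_0(A^c)=0$, that is, $\nu_0\in\mathcal E^+_A$.

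It remains to identify $\nu_0$ with $\mu^A$, which I would do by a distance argument rather than by comparing potentials. Since $\|\mu-\nu_0\|=\lim_t\|\mu-\mu^{A_t}\|\leqslant\|\mu-\mu^A\|=\varrho(\mu,\mathcal E^+_A)$, while $\nu_0\in\mathcal E^+_A$ forces $\|\mu-\nu_0\|\geqslant\varrho(\mu,\mathcal E^+_A)$, we conclude $\|\mu-\nu_0\|=\varrho(\mu,\mathcal E^+_A)$. Thus $\nu_0$ realizes the minimal distance from $\mu$ to the strongly closed convex cone $\mathcal E^+_A$, and the uniqueness of the orthogonal projection (Theorem~\ref{th-proj}, equivalently the uniqueness clause of Theorem~\ref{th-bala-f}) gives $\nu_0=\mu^A$. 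The asserted strong and vague convergence $\mu^{A_t}\to\mu^A$ follows at once.

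I expect the main obstacle to be the second step—verifying that the vague limit $\nu_0$ is genuinely carried by the intersection $A$, and not merely by the individual closed sets $A_t$—since this is exactly where the closedness of $A$ and the lower directedness enter, through the compactness/finite-intersection argument. Once $\nu_0\in\mathcal E^+_A$ is secured, the identification $\nu_0=\mu^A$ is immediate from the minimal-distance characterization, with no need to pass to potentials or to invoke near-everywhere convergence; this is what makes the distance argument preferable to an approach through \eqref{eq-bala-f1}.
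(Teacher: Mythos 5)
Your proposal is correct and follows essentially the same route as the paper: the strong Cauchy property of $(\mu^{A_t})$ via \cite[Lemma~4.1.1]{F1} applied to the increasing, bounded net $\varrho(\mu,\mathcal E^+_{A_t})$, convergence by perfectness, membership of the limit in $\mathcal E^+_A$, and identification with $\mu^A$ by the minimal-distance characterization. Your compactness/test-function argument for $\nu_0\in\mathcal E^+_A$ merely spells out what the paper asserts in one line (that $\mathcal E^+_A$ is the intersection of the strongly closed cones $\mathcal E^+_{A_t}$), and the appeal to Corollary~\ref{rest-fin} is harmless but not needed.
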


\begin{proof}For $\mu\in\mathcal E^+$, $\mu^{A_t}$ is the orthogonal projection of $\mu$ onto $\mathcal E_{A_t}^+$ (cf.\ Theorem~\ref{th-bala-f}). Since
$\mathcal E_A^+\subset\mathcal E_{A_{t'}}^+\subset\mathcal E_{A_t}^+$ whenever $t'$ follows $t$, the net $\bigl(\varrho(\mu,\mathcal E_{A_t}^+)\bigr)_{t\in T}$ is increasing and majorized by $\varrho(\mu,\mathcal E_A^+)<\infty$; hence, it is Cauchy in $\mathbb R$. In consequence of \cite[Lemma~4.1.1]{F1} with $\mathcal H:=\mathcal E$, $\Gamma:=\{\mu-\nu:\ \nu\in\mathcal E_{A_t}^+\}$, and $\lambda:=\mu-\mu^{A_t}$,
\[\|\mu^{A_t}-\mu^{A_{t'}}\|^2=\|(\mu-\mu^{A_t})-(\mu-\mu^{A_{t'}})\|^2\leqslant\varrho(\mu,\mathcal E_{A_{t'}}^+)^2-\varrho(\mu,\mathcal E_{A_t}^+)^2.\]
It follows that the net $(\mu^{A_t})_{t\in T}$ is strong Cauchy in $\mathcal E^+$. Being thus strongly bounded, it is vaguely bounded by \cite[Lemma~2.5.1]{F1} and, therefore, has a vague cluster point $\mu_0\in\mathfrak M^+$ according to \cite[Chapter~III, Section~2, Proposition~9]{B2}. On account of the perfectness of the Riesz kernel, this $\mu_0$ is, in fact, a (unique) strong and vague limit of $(\mu^{A_t})_{t\in T}$. Consequently, $\mu_0$ belongs to $\mathcal E^+_{A_t}$ for every $t\in T$, and hence to $\mathcal E^+_A$, the intersection of $\mathcal E^+_{A_t}$ over $t\in T$. Thus,
\[\varrho(\mu,\mathcal E^+_A)\leqslant\|\mu-\mu_0\|=\lim_{t}\,\|\mu-\mu^{A_t}\|=\lim_{t}\,\varrho(\mu,\mathcal E^+_{A_t})\leqslant\varrho(\mu,\mathcal E^+_A),\]
which implies $\mu_0=\mu^A$.\end{proof}

The following corollary to Theorem~\ref{incr} will be specified in Theorems~\ref{bal-mass-th} and \ref{bal-mass-th'} below.

\begin{corollary}\label{ineq-en}For\/ $\mu\in\mathfrak M^+$ and\/ $A$ arbitrary,
\begin{equation}\label{t-mass}\mu^A(\mathbb R^n)\leqslant\mu(\mathbb R^n).\end{equation}
\end{corollary}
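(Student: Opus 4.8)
The plan is to recover the total mass of a positive measure from pairings of its potential against a well-chosen family of test measures, in a manner compatible with the balayage symmetry identity~(\ref{alternative}), and then to exploit the asymmetry between dominated convergence and Fatou's lemma so that the resulting inequality points in the desired direction. First I would dispose of the trivial case: if $\mu(\mathbb R^n)=\infty$, then (\ref{t-mass}) holds by the conventions on $1/(+\infty)$, so from now on I assume $\mu(\mathbb R^n)<\infty$.

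For the test family I would take the dilations $m^{(r)}$ of the uniform unit mass $m^{(1)}$ on $B(0,1)$, i.e.\ the uniform unit mass on $B(0,r)$ (the very measures already used in the proof of Corollary~\ref{th-f-contu}). Since $m^{(1)}\in\mathcal E^+$ has a bounded and continuous potential (here $0<\alpha<n$ is used), one has the elementary dilation identity $r^{n-\alpha}U^{m^{(r)}}(y)=U^{m^{(1)}}(y/r)$, so that $0\leqslant r^{n-\alpha}U^{m^{(r)}}\leqslant M:=\sup U^{m^{(1)}}<\infty$ and, for each fixed $y$, $r^{n-\alpha}U^{m^{(r)}}(y)\to c_0:=U^{m^{(1)}}(0)\in(0,\infty)$ as $r\to\infty$.

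The key step is the chain, valid for every $r>0$,
\[\int U^{m^{(r)}}\,d\mu^A=E(\mu^A,m^{(r)})=E(\mu,(m^{(r)})^A)\leqslant\int U^{m^{(r)}}\,d\mu,\]
whose middle equality is the symmetry identity (\ref{alternative}) with $\lambda=m^{(r)}\in\mathcal E^+$, and whose final inequality is $U^{(m^{(r)})^A}\leqslant U^{m^{(r)}}$, i.e.\ relation (\ref{eq-bala-f2}) for the finite-energy measure $m^{(r)}$ swept to the arbitrary set $A$ (cf.\ Corollary~\ref{cor-eq-arb}). Multiplying through by $r^{n-\alpha}$ and letting $r\to\infty$, I would estimate the two ends differently: on the right, dominated convergence (integrand bounded by $M$, and $\mu$ finite) gives the limit $c_0\,\mu(\mathbb R^n)$; on the left, Fatou's lemma gives $\liminf_{r}\int r^{n-\alpha}U^{m^{(r)}}\,d\mu^A\geqslant c_0\,\mu^A(\mathbb R^n)$. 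Combining these yields $c_0\,\mu^A(\mathbb R^n)\leqslant c_0\,\mu(\mathbb R^n)$, and dividing by $c_0>0$ gives (\ref{t-mass}).

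The point I expect to demand the most care is exactly this asymmetric passage to the limit: the inequality comes out correctly only because the side carrying the reference measure $\mu$ (of known finite mass) admits dominated convergence, whereas the side carrying the swept measure $\mu^A$ — whose finiteness is a priori unknown and is precisely what is being established — must instead be handled by the lower semicontinuity built into Fatou's lemma. The remaining ingredients (continuity and boundedness of $U^{m^{(1)}}$, giving $M<\infty$ and $c_0>0$, and the dilation identity for $U^{m^{(r)}}$) are routine. Alternatively, one can first invoke Theorem~\ref{incr} together with the vague lower semicontinuity of total mass to reduce (\ref{t-mass}) to the case of compact $A=K$; there $\mu^K$ is carried by the compact set $K$, and the same renormalized-potential computation applies, now with a genuine limit in place of a $\liminf$ since $\mu^K$ has compact support.
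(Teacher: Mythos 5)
Your argument is correct, and its main route is genuinely different from the paper's. The paper first reduces to $A=K$ compact via Theorem~\ref{incr} and the vague lower semicontinuity of $\nu\mapsto\nu(\mathbb R^n)$, and then tests against a \emph{single} measure: the equilibrium measure $\gamma$ of a closed ball $\overline{B}\supset K$, whose potential equals $1$ on $\overline{B}$ and is $\leqslant1$ everywhere. Since $\mu^K$ is carried by $K$, the identity $U^\gamma=1$ on $K$ converts $\int U^\gamma\,d\mu^K$ into $\mu^K(\mathbb R^n)$ exactly, and the rest is Fubini--Tonelli plus the domination $U^{\mu^K}\leqslant U^\mu$ of (\ref{eq-bala-f2}); no symmetry identity and no limit process at the level of test measures is needed. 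You instead work with arbitrary $A$ directly, replace the single equilibrium measure by the scaled family $m^{(r)}$, invoke the defining symmetry identity (\ref{alternative}) together with $U^{(m^{(r)})^A}\leqslant U^{m^{(r)}}$, and extract the mass by the renormalized limit $r^{n-\alpha}U^{m^{(r)}}\to U^{m^{(1)}}(0)$, with the Fatou/dominated-convergence asymmetry correctly placed (Fatou on the $\mu^A$ side, whose mass is not yet known to be finite; dominated convergence on the $\mu$ side, after disposing of the trivial case $\mu(\mathbb R^n)=\infty$). All the ingredients you use are available in the paper (the dilation identity and the boundedness and continuity of $U^{m^{(1)}}$ are routine for $0<\alpha<n$), so the proof is complete. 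What your version buys is independence from the reduction to compact sets and from the existence of an equilibrium measure with potential identically $1$ on a neighbourhood of the support of the swept measure --- it rests only on (\ref{alternative}), (\ref{eq-bala-f2}), and scaling --- at the price of a limiting argument; the paper's version is shorter and entirely finite. The alternative you sketch in your last sentence (reduce to $K$ compact and then compute) is essentially the paper's proof.
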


\begin{proof} Since $\mu^K\to\mu^A$ vaguely as $K\uparrow A$, while the map $\nu\mapsto\nu(\mathbb R^n)$ is vaguely l.s.c.\ on $\mathfrak M^+$, it suffices to establish (\ref{t-mass}) for $A=K$ compact. Consider a closed ball $\overline{B}$ containing $K$, and the equilibrium measure $\gamma$ on $\overline{B}$; then $U^\gamma=1$ on $\overline{B}$ and $U^\gamma\leqslant1$ on $\mathbb R^n$ \cite[Chapter~II, Section~3, n$^\circ$~13]{L}. Therefore,
\[\mu^K(\mathbb R^n)=\int U^\gamma\,d\mu^K=\int U^{\mu^K}\,d\gamma\leqslant
\int U^\mu\,d\gamma=\int U^\gamma\,d\mu\leqslant\mu(\mathbb R^n),\]
the former inequality being valid according to (\ref{eq-bala-f2}) (cf.\ Theorem~\ref{th-bala-2}).
\end{proof}

\section{Inner Riesz equilibrium measure. Criteria for its existence}

We assume as above that $A$ is an arbitrary proper subset of $\mathbb R^n$ with $c_\alpha(A)>0$, and denote by $\Theta_A$ the class of all $\nu\in\mathfrak M^+$ with  $U^\nu\geqslant1$ n.e.\ on $A$.

\begin{definition}\label{def-eq}$\gamma_A\in\mathfrak M^+$ is said to be an \emph{inner equilibrium measure\/} of $A$ if (its potential $U^{\gamma_A}$ is not identically infinite and)\footnote{One can introduce a concept of inner balayage for positive $\alpha$-sup\-er\-harmonic functions on $\mathbb R^n$, generalizing that by Cartan for $\alpha=2$ \cite[p.~257]{Ca2}, and then $U^{\gamma_A}$ (if $\gamma_A$ exists) will be thought of as an inner balayage of $f\equiv1$ to $A$. Being however mainly concerned with the existence of $\gamma_A$, we drop this part of the analysis.}
\[U^{\gamma_A}=\inf_{\nu\in\Theta_A}\,U^\nu.\]
\end{definition}

An inner equilibrium measure $\gamma_A$ is certainly \emph{unique}, and it exists only if $\Theta_A$ is nonempty. (We shall show in Lemma~\ref{l-eq-ar} below that the latter can actually be reversed, and so $\Theta_A\ne\varnothing$ is necessary and sufficient for the existence of $\gamma_A$.)

\begin{lemma}\label{l-eq-finite}Assume that\/ $c_\alpha(A)<\infty$. Then\/ $\gamma_A$ exists and, moreover,
\begin{itemize}
\item[{\rm(a)}] $\gamma_A(\mathbb R^n)=E(\gamma_A)=c_\alpha(A)$,
\item[{\rm(b)}] $S(\gamma_A)\subset\overline{A}$,
\item[{\rm(c)}] $U^{\gamma_A}=1$ n.e.\ on $A$,
\item[{\rm(d)}] $U^{\gamma_A}\leqslant1$ on $\mathbb R^n$.
\end{itemize}
This\/ $\gamma_A$ is a unique solution to the problem of minimizing the energy over\/ $\Theta_A\cap\mathcal E^+$, and hence it is characterized uniquely within\/ $\mathcal E^+$ by\/ {\rm(a)} and\/ {\rm(c)}.
\end{lemma}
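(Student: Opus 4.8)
The plan is to realize $\gamma_A$ as the strong and vague limit of the classical equilibrium measures $\gamma_K$, $K\in\mathfrak C_A$, in direct analogy with the construction of inner balayage in Theorem~\ref{th-f-cont}. For each compact $K\subset A$ with $c_\alpha(K)>0$, the equilibrium measure $\gamma_K\in\mathcal E^+_K$ satisfies $U^{\gamma_K}=1$ n.e.\ on $K$, $U^{\gamma_K}\leqslant1$ on $\mathbb R^n$, and $E(\gamma_K)=\gamma_K(\mathbb R^n)=c_\alpha(K)$; moreover $\gamma_K$ is the only element of $\mathcal E^+_K$ whose potential equals $1$ n.e.\ on $K$ (the uniqueness seen exactly as in Theorem~\ref{th-bala-f}). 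First I would record that $\gamma_K=(\gamma_{K'})^K$ whenever $K\subset K'$: indeed $(\gamma_{K'})^K\in\mathcal E^+_K$ and, by (\ref{eq-bala-f1}) together with $U^{\gamma_{K'}}=1$ n.e.\ on $K'\supset K$, its potential equals $1$ n.e.\ on $K$, so the uniqueness just stated forces the identity. Consequently $U^{\gamma_K}\leqslant U^{\gamma_{K'}}$ by (\ref{eq-bala-f2}); and since $\gamma_K$ is then the orthogonal projection $(\gamma_{K'})^K$ of $\gamma_{K'}$ onto $\mathcal E^+_K$ (Theorem~\ref{th-bala-f}), the orthogonality relation (\ref{proj2}) gives $\|\gamma_{K'}-\gamma_K\|^2=\|\gamma_{K'}\|^2-\|\gamma_K\|^2=c_\alpha(K')-c_\alpha(K)$.

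Because $c_\alpha(K)\uparrow c_\alpha(A)<\infty$ by (\ref{cap-incr}), the net $\bigl(c_\alpha(K)\bigr)_{K\in\mathfrak C_A}$ is Cauchy in $\mathbb R$, whence the last display shows that $(\gamma_K)_{K\in\mathfrak C_A}$ is strong Cauchy in $\mathcal E^+$. By perfectness of the kernel it then converges strongly and vaguely to some $\gamma_A\in\mathcal E^+$; and since the potentials increase and are majorized by $1$, Theorem~\ref{3.9} yields $U^{\gamma_K}\uparrow U^{\gamma_A}$ pointwise on $\mathbb R^n$. This at once gives (d), and, as each $\gamma_K$ is carried by $K\subset A$ while $\mathfrak M^+_{\overline A}$ is vaguely closed, it gives (b). For (a), strong convergence gives $E(\gamma_A)=\lim_K c_\alpha(K)=c_\alpha(A)$, while vague lower semicontinuity of total mass gives $\gamma_A(\mathbb R^n)\leqslant c_\alpha(A)$; combined with $E(\gamma_A)\leqslant\gamma_A(\mathbb R^n)$ (from $U^{\gamma_A}\leqslant1$) this forces $\gamma_A(\mathbb R^n)=E(\gamma_A)=c_\alpha(A)$.

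The main obstacle is (c): passing from ``$U^{\gamma_A}=1$ n.e.\ on every compact $K\subset A$'' to ``n.e.\ on $A$'', the difficulties being that $A$ need not be a countable union of the $K$'s (so Lemma~\ref{subad} does not apply directly) and that $\gamma_A$ is only carried by $\overline A$. I would argue by contradiction. The set $D:=\{x\in\mathbb R^n:\ U^{\gamma_A}(x)<1\}$ is Borel (indeed $F_\sigma$, since $U^{\gamma_A}$ is l.s.c.). If $c_\alpha(A\cap D)>0$, then by (\ref{cap-incr}) some compact $K\subset A\cap D$ has $c_\alpha(K)>0$; but $U^{\gamma_A}\geqslant U^{\gamma_K}$ on $\mathbb R^n$ while $U^{\gamma_K}=1$ n.e.\ on $K$, so $U^{\gamma_A}\geqslant1$ n.e.\ on $K$, contradicting $U^{\gamma_A}<1$ throughout $K\subset D$. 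Hence $c_\alpha(A\cap D)=0$, which with (d) yields (c); in particular $\gamma_A\in\Theta_A\cap\mathcal E^+$.

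Finally I would match $\gamma_A$ with Definition~\ref{def-eq} and settle the extremal statement. For any $\nu\in\Theta_A$ and any $K\in\mathfrak C_A$ we have $U^{\gamma_K}=1\leqslant U^\nu$ n.e.\ on $K$, hence $\gamma_K$-a.e.\ ($\gamma_K\in\mathcal E^+$ being absolutely continuous and carried by $K$), so $U^{\gamma_K}\leqslant U^\nu$ on $\mathbb R^n$ by the domination principle (Theorem~\ref{dom}); letting $K\uparrow A$ gives $U^{\gamma_A}\leqslant U^\nu$. Thus $U^{\gamma_A}=\inf_{\nu\in\Theta_A}U^\nu$, so $\gamma_A$ is the inner equilibrium measure. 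For the extremal characterization, the same computation with $\nu\in\Theta_A\cap\mathcal E^+$ gives $E(\gamma_K,\nu)=\int U^\nu\,d\gamma_K\geqslant\gamma_K(\mathbb R^n)=c_\alpha(K)$; passing to the strong limit yields $E(\gamma_A,\nu)\geqslant c_\alpha(A)$, whence by Cauchy--Schwarz $E(\nu)\geqslant E(\gamma_A,\nu)^2/E(\gamma_A)\geqslant c_\alpha(A)=E(\gamma_A)$, so $\gamma_A$ minimizes the energy over $\Theta_A\cap\mathcal E^+$. Uniqueness of the minimizer follows from the equality case of Cauchy--Schwarz together with strict positive definiteness of the kernel; and then any $\theta\in\mathcal E^+$ satisfying (a) and (c) lies in $\Theta_A\cap\mathcal E^+$ with minimal energy $c_\alpha(A)$, forcing $\theta=\gamma_A$.
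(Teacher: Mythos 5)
Your construction is sound in outline and in nearly all details; the paper itself disposes of this lemma by citing \cite[Chapter~II, Section~2, n$^\circ$~7]{L}, \cite[Lemma~4.5]{L} and \cite[Section~4.1]{F1}, and your argument is essentially the strong-limit construction of \cite[Section~4.1]{F1} recast in the language of Section~\ref{sec-bal}. There is, however, one genuine gap: the appeal to Theorem~\ref{3.9} with the constant function $1$ as majorant. Theorem~\ref{3.9} requires the upper directed family $(U^{\gamma_K})_{K\in\mathfrak C_A}$ to be majorized by $U^\mu$ for \emph{some} $\mu\in\mathfrak M^+$, and the constant $1$ is not of that form; moreover the weakened statement ``majorized by $1$'' is actually false in general, as the net $(U^{\gamma_K})_{K\in\mathfrak C_{\mathbb R^n}}$ shows (it increases to the constant $1$, which is the potential of no measure). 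Producing an honest potential majorant for this net when $A$ is unbounded is essentially the assertion $\Theta_A\ne\varnothing$ --- precisely the nontrivial content of Lemma~\ref{l-eq-ar} and Theorem~\ref{l-eq-ex}, and the reason the paper's proof of Lemma~\ref{l-eq-ar} first fixes $\xi\in\Theta_A$ and majorizes by $U^\xi$. The step matters because the pointwise relation $U^{\gamma_K}\uparrow U^{\gamma_A}$ is what you then use for (c), for (d), and for both limit passages at the end.

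The gap is repairable inside your own framework without Theorem~\ref{3.9}. Since orthogonal projections onto strongly complete convex cones are non-expansive (this follows from (\ref{proj1})--(\ref{proj2}) exactly as in a Hilbert space), the identity $\gamma_K=(\gamma_{K'})^K$ together with $\gamma_{K'}\to\gamma_A$ strongly gives $(\gamma_A)^K=\gamma_K$, whence $U^{\gamma_K}\leqslant U^{\gamma_A}$ on $\mathbb R^n$ by (\ref{eq-bala-f2}); conversely, $\gamma_K\to\gamma_A$ vaguely and $\nu\mapsto U^\nu(x)$ is vaguely l.s.c.\ (principle of descent), so $U^{\gamma_A}\leqslant\liminf_K U^{\gamma_K}=\sup_K U^{\gamma_K}$. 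Together these yield $U^{\gamma_K}\uparrow U^{\gamma_A}$ pointwise on $\mathbb R^n$, and with this substitution the rest of your proof --- the telescoping identity $\|\gamma_{K'}-\gamma_K\|^2=c_\alpha(K')-c_\alpha(K)$, the Cauchy argument, the contradiction argument for (c), and the Cauchy--Schwarz treatment of the minimizing property and of the uniqueness via (a) and (c) --- goes through as written.
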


\begin{proof}This is obtained from \cite[Chapter~II, Section~2, n$^\circ$~7]{L} and \cite[Lemma~4.5]{L}. See also \cite[Section~4.1]{F1}.\end{proof}

\begin{lemma}\label{l-eq-ar}
For\/ $A$ arbitrary, assume that\/ $\Theta_A\ne\varnothing$. Then\/ $\gamma_A$ exists. Furthermore, it is absolutely continuous and has the properties\/ $S(\gamma_A)\subset\overline{A}$ and\/\footnote{Relation (\ref{eq-ex0}) will be specified below (cf.\ Lemma~\ref{reg-com} and Theorem~\ref{irreg-com}). See also Theorem~\ref{desc-eq}, which establishes a detailed description of $U^{\gamma_A}$ and $S(\gamma_A)$ for $A$ closed.}
\begin{align}
\label{eq-ex0}&U^{\gamma_A}=1\text{ \ n.e.\ on\ }A,\\
\label{eq-ex1}&U^{\gamma_A}\leqslant1\text{ \ on\ }\mathbb R^n,\\
\label{eq-ex2}&\gamma_K\to\gamma_A\text{ \ vaguely as\ }K\uparrow A,\\
\label{eq-ex3}&U^{\gamma_K}\uparrow U^{\gamma_A}\text{ \ pointwise on $\mathbb R^n$ as\ }K\uparrow A.\end{align}
\end{lemma}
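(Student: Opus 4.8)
The plan is to construct $\gamma_A$ as the limit of the compact equilibrium measures $\gamma_K$ along $\mathfrak C_A$, which will simultaneously yield (\ref{eq-ex2}) and (\ref{eq-ex3}), and then to read off the remaining properties. For each $K\in\mathfrak C_A$ with $c_\alpha(K)>0$, Lemma~\ref{l-eq-finite} supplies $\gamma_K\in\mathcal E^+$ (compact sets have finite capacity) with $U^{\gamma_K}=1$ n.e.\ on $K$ and $U^{\gamma_K}\leqslant1$ on $\mathbb R^n$; by $c_\alpha(A)>0$ and (\ref{cap-incr}) such $K$ exist, and I restrict attention to them.

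First I would show that the net $(U^{\gamma_K})_{K\in\mathfrak C_A}$ is increasing. For $K\subset K'$ one has $U^{\gamma_{K'}}=1=U^{\gamma_K}$ n.e.\ on $K$, hence $\gamma_K$-a.e.\ (as $\gamma_K\in\mathcal E^+$ is absolutely continuous), so the domination principle (Theorem~\ref{dom}) gives $U^{\gamma_K}\leqslant U^{\gamma_{K'}}$ everywhere. The very same argument bounds the net: fixing $\nu\in\Theta_A$ we have $U^{\gamma_K}=1\leqslant U^\nu$ n.e.\ on $K$, hence $\gamma_K$-a.e., whence $U^{\gamma_K}\leqslant U^\nu$ on $\mathbb R^n$ by Theorem~\ref{dom}. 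Thus $(U^{\gamma_K})$ is an upper directed family of potentials majorized by $U^\nu$, and Theorem~\ref{3.9} produces $\gamma\in\mathfrak M^+$ with $U^{\gamma_K}\uparrow U^\gamma$ pointwise and $\gamma_K\to\gamma$ vaguely, which is exactly (\ref{eq-ex2}), (\ref{eq-ex3}). Since each $\gamma_K$ is carried by $\overline{A}$ and $\mathfrak M^+_{\overline{A}}$ is vaguely closed, $S(\gamma)\subset\overline{A}$; and $U^\gamma=\sup_K U^{\gamma_K}\leqslant1$ gives (\ref{eq-ex1}).

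The crux is (\ref{eq-ex0}), namely $U^\gamma=1$ n.e.\ on $A$, where the passage from compact $K$ to the arbitrary set $A$ must be handled through inner capacity rather than a naive limit. Given (\ref{eq-ex1}), it suffices to show that $\{x\in A:\ U^\gamma(x)<1\}$ has inner capacity zero, and by (\ref{cap-incr}) this reduces to checking that every compact subset $L$ of it satisfies $c_\alpha(L)=0$. Were $c_\alpha(L)>0$, then $L\in\mathfrak C_A$ would give $U^\gamma\geqslant U^{\gamma_L}=1$ n.e.\ on $L$, contradicting $U^\gamma<1$ on all of $L$ unless $c_\alpha(L)=0$. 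With (\ref{eq-ex0}) in hand, $\gamma\in\Theta_A$, so $U^\gamma\geqslant\inf_{\nu\in\Theta_A}U^\nu$; combined with $U^\gamma=\sup_K U^{\gamma_K}\leqslant U^\nu$ for every $\nu\in\Theta_A$ established above, this yields $U^\gamma=\inf_{\nu\in\Theta_A}U^\nu$, identifying $\gamma$ with the inner equilibrium measure $\gamma_A$ of Definition~\ref{def-eq} (its potential being finite, as $\leqslant1$).

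Finally, for absolute continuity I would exploit the boundedness of the potential: if $L$ is compact with $c_\alpha(L)=0$, then $\sigma:=\gamma|_L$ satisfies $U^\sigma\leqslant U^\gamma\leqslant1$, so $E(\sigma)=\int U^\sigma\,d\sigma\leqslant\sigma(L)<\infty$; hence $\sigma\in\mathcal E^+_L=\{0\}$ by Lemma~\ref{2.3.1}, forcing $\gamma(L)=0$. The only genuinely delicate point is (\ref{eq-ex0}): everything else is a direct application of the domination principle, Theorem~\ref{3.9}, and vague closedness, whereas the n.e.\ equality on \emph{all} of $A$ is where the inner (compact-exhaustion) nature of the construction is essential.
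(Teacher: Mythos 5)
Your proposal is correct and follows essentially the same route as the paper: monotonicity and boundedness of $(U^{\gamma_K})_{K\in\mathfrak C_A}$ via the domination principle, passage to the limit by Theorem~\ref{3.9}, reduction of the n.e.\ equality on $A$ to compact subsets through the inner nature of capacity, and absolute continuity from the boundedness of the potential. The only cosmetic difference is that you phrase the key step (\ref{eq-ex0}) as a contradiction on a compact $L$ of positive capacity, whereas the paper directly includes the exceptional set of $K$ in that of $\gamma_K$; these are the same argument.
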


\begin{proof}Fix $\xi\in\mathfrak M^+$ with $U^\xi\geqslant1$ n.e.\ on $A$. Then for any $K,K'\in\mathfrak C_A$ such that $K\subset K'$, we have $U^{\gamma_K}=U^{\gamma_{K'}}\leqslant U^\xi$ n.e.\ on $K$ (cf.\ Lemma~\ref{l-eq-finite}(c)), and hence $\gamma_K$-a.e. The domination principle therefore shows that the net $(U^{\gamma_K})_{K\in\mathfrak C_A}$ is pointwise increasing on $\mathbb R^n$ and majorized by $U^\xi$. According to Theorem~\ref{3.9}, there exists $\nu_0\in\mathfrak M^+$ such that
$U^{\gamma_K}\uparrow U^{\nu_0}$ pointwise on $\mathbb R^n$ and $\gamma_K\to\nu_0$ vaguely as $K\uparrow A$.
Hence, $U^{\nu_0}\leqslant1$ on $\mathbb R^n$ because $U^{\gamma_K}\leqslant1$ on $\mathbb R^n$ by Lemma~\ref{l-eq-finite}(d).

We claim that $U^{\nu_0}=1$ n.e.\ on $A$, or equivalently n.e.\ on every $K\in\mathfrak C_A$. We thus need to prove $c_\alpha(E)=0$, where
$E:=\{x\in K:\ U^{\nu_0}(x)<1\}$. But this is obvious in view of the relations $E\subset E':=\{x\in K:\ U^{\gamma_K}(x)<1\}$ and $c_\alpha(E')=0$, the latter being seen from Lemma~\ref{l-eq-finite}(c).

Thus, $\nu_0\in\Theta_A$. We assert that this $\nu_0$ actually serves as an inner equilibrium measure of $A$. According to Definition~\ref{def-eq}, it is enough to verify that
$U^{\nu_0}\leqslant U^\nu$ on $\mathbb R^n$ for every $\nu\in\mathfrak M^+$ with $U^\nu\geqslant1$ n.e.\ on $A$. Since then $U^{\gamma_K}\leqslant U^\nu$ on $\mathbb R^n$ for every $K\in\mathfrak C_A$ (see above), the required inequality is obtained by letting $K\uparrow A$.

It has thus been proven that $\gamma_A:=\nu_0$ exists and satisfies (\ref{eq-ex0})--(\ref{eq-ex3}). Next, it follows from (\ref{eq-ex2}) that $S(\gamma_A)\subset\overline{A}$, $\mathfrak M^+(\overline{A})$ being vaguely closed. Finally, since the restriction of $\gamma_A$ to any compact subset of $\mathbb R^n$ is of finite energy because of (\ref{eq-ex1}), $\gamma_A$ is absolutely continuous.\end{proof}

\begin{corollary}\label{clos-eq}For\/ $A$ closed, assume that there is an absolutely continuous measure\/ $\check{\gamma}\in\mathfrak M^+_A$ with\/ $U^{\check{\gamma}}=1$ n.e.\ on\/ $A$. Then\/ $\gamma_A$ exists and, moreover, $\gamma_A=\check{\gamma}$.\end{corollary}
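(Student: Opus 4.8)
The plan is to show that the hypothesized measure $\check\gamma$ satisfies the defining extremal property of Definition~\ref{def-eq}, so that it must coincide with $\gamma_A$ once the latter is shown to exist. First I would observe that the hypothesis $\check\gamma\in\mathfrak M^+_A$ with $U^{\check\gamma}=1$ n.e.\ on $A$ immediately places $\check\gamma$ in the class $\Theta_A$, so in particular $\Theta_A\ne\varnothing$. By Lemma~\ref{l-eq-ar}, the inner equilibrium measure $\gamma_A$ therefore exists, is absolutely continuous, and satisfies the relations \eqref{eq-ex0}--\eqref{eq-ex3}; in particular $U^{\gamma_A}=1$ n.e.\ on $A$ and $U^{\gamma_A}\leqslant1$ on $\mathbb R^n$. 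It thus remains only to verify that $\check\gamma=\gamma_A$.

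The key step is to compare the two potentials. Since $\gamma_A$ is the inner equilibrium measure, Definition~\ref{def-eq} gives $U^{\gamma_A}=\inf_{\nu\in\Theta_A}U^\nu\leqslant U^{\check\gamma}$ on all of $\mathbb R^n$, the inequality holding because $\check\gamma\in\Theta_A$. For the reverse inequality I would exploit that both measures are absolutely continuous and that their potentials agree n.e.\ on $A$. Indeed, from \eqref{eq-ex0} and the hypothesis we have $U^{\gamma_A}=1=U^{\check\gamma}$ n.e.\ on $A$; since $\check\gamma$ is carried by the closed set $A$ and is absolutely continuous, the exceptional set of inner capacity zero is $\check\gamma$-negligible, so $U^{\gamma_A}=U^{\check\gamma}$ holds $\check\gamma$-a.e. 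The main obstacle is to upgrade this a.e.\ agreement to a pointwise inequality on all of $\mathbb R^n$; here the domination principle (Theorem~\ref{dom}) is the natural tool, but it requires a measure of finite energy on the left-hand side, whereas $\check\gamma$ and $\gamma_A$ need only be absolutely continuous.

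To circumvent the finite-energy restriction I would argue by an approximation identical in spirit to the proof of Lemma~\ref{l-eq-ar}: for each $K\in\mathfrak C_A$ the equilibrium measure $\gamma_K\in\mathcal E^+_K$ satisfies $U^{\gamma_K}\leqslant U^{\check\gamma}$ n.e.\ on $K$ by Lemma~\ref{l-eq-finite}(c), hence $\gamma_K$-a.e., and then on all of $\mathbb R^n$ by the domination principle applied to the finite-energy measure $\gamma_K$ against $\nu:=\check\gamma$. Letting $K\uparrow A$ and invoking \eqref{eq-ex3}, which gives $U^{\gamma_K}\uparrow U^{\gamma_A}$ pointwise, yields $U^{\gamma_A}\leqslant U^{\check\gamma}$ on $\mathbb R^n$. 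Combined with the reverse inequality from the extremal property, this forces $U^{\gamma_A}=U^{\check\gamma}$ everywhere, and the strict positive definiteness of the Riesz kernel (together with the convolution-mollification argument used in Corollary~\ref{th-f-contu}) then gives $\check\gamma=\gamma_A$, completing the proof.
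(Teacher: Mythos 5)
Your first paragraph is exactly the paper's opening move: since $\check\gamma\in\Theta_A$, the measure $\gamma_A$ exists by Lemma~\ref{l-eq-ar}, is absolutely continuous, is carried by the closed set $A$, and satisfies $U^{\gamma_A}=1$ n.e.\ on $A$. The problem is in your comparison of the two potentials: both of your inequalities point the same way. The extremal property of Definition~\ref{def-eq} gives $U^{\gamma_A}\leqslant U^{\check\gamma}$ on $\mathbb R^n$ because $\check\gamma\in\Theta_A$; and your approximation argument ($U^{\gamma_K}\leqslant U^{\check\gamma}$ n.e.\ on $K$, hence $\gamma_K$-a.e., hence everywhere by Theorem~\ref{dom}, then $K\uparrow A$ via (\ref{eq-ex3})) again yields $U^{\gamma_A}\leqslant U^{\check\gamma}$ on $\mathbb R^n$. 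What you call ``the reverse inequality'' is therefore never established, and the two halves do not combine to give $U^{\gamma_A}=U^{\check\gamma}$.

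The missing direction, $U^{\check\gamma}\leqslant U^{\gamma_A}$, is precisely where the obstacle you yourself flagged bites and is not circumvented by your argument. One does know $U^{\check\gamma}=U^{\gamma_A}$ holds $\check\gamma$-a.e.\ (the exceptional set in $A$ has inner capacity zero and $\check\gamma$ is absolutely continuous and carried by $A$), but to upgrade this to a global inequality via Theorem~\ref{dom} one would need $\check\gamma\in\mathcal E^+$, which is not part of the hypotheses. The paper closes this gap by invoking a stronger uniqueness statement (\cite[p.~178, Remark]{L}): two \emph{absolutely continuous} measures of the class $\mathfrak M^+_A$ coincide as soon as their potentials agree n.e.\ on $A$. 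Since $\gamma_A$ and $\check\gamma$ are both absolutely continuous, both carried by $A$, and both have potential equal to $1$ n.e.\ on $A$, that remark gives $\gamma_A=\check\gamma$ at once. Your proof becomes correct if you replace the final comparison by an appeal to this (or to some equivalent extension of the domination principle to absolutely continuous measures); as written, it does not reach the conclusion.
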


\begin{proof} Since $\check{\gamma}\in\Theta_A$, $\gamma_A$ exists according to the preceding lemma. Furthermore, $\gamma_A$ is absolutely continuous, supported by $A$, and satisfies (\ref{eq-ex0}). Then necessarily $\gamma_A=\check{\gamma}$, because any two absolutely continuous measures of the class $\mathfrak M^+_A$ coincide whenever their potentials are equal n.e.\ on $A$ (cf.\ \cite[p.~178, Remark]{L}).\end{proof}

In a particular case where $A$ is Borel, Theorem~\ref{l-eq-ex} below is \cite[Theorem~5.1]{L}. However, \cite[Theorem~5.1]{L} has not been completely justified, because the proof of its necessity part is based on the concept of balayage, introduced in \cite[Chapter~IV, Section~6, n$^\circ$~25]{L} by means of the integral representation (\ref{L-repr}) (see the Introduction for details). Applying the theory of inner Riesz balayage, developed in Section~\ref{sec-bal} above, we fix the gap in \cite[Proof of Theorem~5.1]{L} and, moreover, we strengthen \cite[Theorem~5.1]{L} to $A$ arbitrary.

\begin{theorem}\label{l-eq-ex}For\/ $A$ arbitrary, the following two assertions are equivalent.
\begin{itemize}
\item[{\rm(i)}]There exists an inner equilibrium measure\/ $\gamma_A$.
\item[{\rm(ii)}]If\/ $R_k:=\bigl\{x\in\mathbb R^n: q^k\leqslant|x|<q^{k+1}\bigr\}$, where\/ $q\in(1,\infty)$, then
\begin{equation}\label{iii}\sum_{k\in\mathbb N}\,\frac{c_\alpha(A\cap R_k)}{q^{k(n-\alpha)}}<\infty.\end{equation}
\end{itemize}
\end{theorem}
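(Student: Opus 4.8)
The plan is to reduce the equivalence to a statement about an auxiliary ``piecewise equilibrium'' measure and then to exploit the admissibility condition (\ref{1.3.10}). By Definition~\ref{def-eq} together with Lemma~\ref{l-eq-ar}, assertion (i) is equivalent to $\Theta_A\ne\varnothing$. Write $A_k:=A\cap R_k$ and $A_0:=A\cap B(0,q)$; each of these sets is bounded, hence of finite inner capacity, so by Lemma~\ref{l-eq-finite} it carries an inner equilibrium measure $\gamma_{A_k}$ (resp.\ $\gamma_{A_0}$) with $\gamma_{A_k}(\mathbb R^n)=c_\alpha(A_k)$, $U^{\gamma_{A_k}}\le 1$ on $\mathbb R^n$, and $U^{\gamma_{A_k}}=1$ n.e.\ on $A_k$. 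Since $B(0,q)\cup\bigcup_k R_k=\mathbb R^n$, these pieces exhaust $A$. Throughout I would use the elementary bound that for $y\in\overline{R_k}$ one has $q^{-(k+1)(n-\alpha)}\le|y|^{\alpha-n}\le q^{-k(n-\alpha)}$, which ties the weights in (\ref{iii}) to the kernel.

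For the implication (ii)$\Rightarrow$(i) I would set $\nu:=\gamma_{A_0}+\sum_{k\in\mathbb N}\gamma_{A_k}$. Because $U^\nu\ge U^{\gamma_{A_k}}=1$ n.e.\ on each $A_k$ (and likewise on $A_0$), the countable subadditivity of inner capacity (Lemma~\ref{subad}) gives $U^\nu\ge 1$ n.e.\ on $A$. It then remains only to check that $\nu$ is an admissible measure, i.e.\ that (\ref{1.3.10}) holds; but $\int_{|y|>1}|y|^{\alpha-n}\,d\nu(y)$ is, up to the finite contribution of $\gamma_{A_0}$, bounded by $\sum_k q^{-k(n-\alpha)}c_\alpha(A_k)$, which is finite precisely by (\ref{iii}). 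Hence $\nu\in\Theta_A$, so $\Theta_A\ne\varnothing$ and $\gamma_A$ exists.

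The converse (i)$\Rightarrow$(ii) is where the real work lies. Assuming $\gamma_A$ exists, Lemma~\ref{l-eq-ar} furnishes $U^{\gamma_A}\le 1$ on $\mathbb R^n$ and $U^{\gamma_A}=1$ n.e.\ on $A$, while the admissibility of $\gamma_A$ is exactly (\ref{1.3.10}): $\int_{|y|>1}|y|^{\alpha-n}\,d\gamma_A(y)<\infty$. The idea is to localize $\gamma_A$ to a block of three consecutive annuli $R_k^+:=R_{k-1}\cup R_k\cup R_{k+1}$ and to show that $2\gamma_A|_{R_k^+}$ is, for all large $k$, a competitor for $A_k$. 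Writing $U^{\gamma_A|_{R_k^+}}=U^{\gamma_A}-U^{\gamma_A|_{(R_k^+)^c}}$ and using $U^{\gamma_A}=1$ n.e.\ on $A_k$, this reduces to the estimate $U^{\gamma_A|_{(R_k^+)^c}}\le\tfrac12$ on $R_k$ for $k\ge k_0$. Once this is in hand, the standard lower bound for capacity (any $\xi\in\mathfrak M^+$ with $U^\xi\ge 1$ n.e.\ on the bounded set $A_k$ satisfies $\xi(\mathbb R^n)\ge c_\alpha(A_k)$, a consequence of Lemma~\ref{l-eq-finite}) yields $c_\alpha(A_k)\le 2\gamma_A(R_k^+)$. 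Summing against the weights and using the bounded overlap of the blocks $R_k^+$ gives $\sum_k q^{-k(n-\alpha)}c_\alpha(A_k)\le C\int_{|y|>1}|y|^{\alpha-n}\,d\gamma_A(y)<\infty$, which is (\ref{iii}); the finitely many indices $k<k_0$ contribute only finitely, each $c_\alpha(A_k)$ being finite.

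The main obstacle is the far-field estimate $U^{\gamma_A|_{(R_k^+)^c}}\le\tfrac12$ on $R_k$. I would split $(R_k^+)^c$ into its outer part $\{|y|\ge q^{k+2}\}$ and its inner part $\{|y|<q^{k-1}\}$. For $x\in R_k$ the distance to the outer part is comparable to $|y|$, so that contribution is dominated by the tail of the sum $\sum_j q^{j(\alpha-n)}\gamma_A(R_j)$, which is comparable to the integral in (\ref{1.3.10}) and hence tends to $0$. For the inner part the distance from $x$ to $y$ is comparable to $q^k$, giving the bound $Cq^{-k(n-\alpha)}\gamma_A(\{|y|<q^{k-1}\})$; here (\ref{1.3.10}) forces $\gamma_A(R_j)=o(q^{j(n-\alpha)})$, whence $\gamma_A(\{|y|<q^{k-1}\})=o(q^{k(n-\alpha)})$ and this term is $o(1)$ as well. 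Both contributions therefore vanish as $k\to\infty$, delivering the required estimate. A minor point to dispatch along the way is that the n.e.\ identities for $U^{\gamma_A}$ may be upgraded to the $\gamma_{A_k}$-a.e.\ statements needed in the capacity comparison, which follows from the absolute continuity of the equilibrium measures.
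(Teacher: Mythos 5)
Your proof of (ii)$\Rightarrow$(i) is essentially the paper's: both build $\xi=\sum_k\gamma_{A\cap R_k}$ (you add the harmless bounded piece $\gamma_{A_0}$, which the paper glosses over), verify condition (\ref{1.3.10}) from the series (\ref{iii}) via $\int|y|^{\alpha-n}\,d\gamma_{A_k}\leqslant q^{-k(n-\alpha)}c_\alpha(A_k)$, and conclude $U^\xi\geqslant1$ n.e.\ on $A$ by Lemma~\ref{subad} and then invoke Lemma~\ref{l-eq-ar}.

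Your converse, however, takes a genuinely different route, and it is correct. The paper splits the annuli into even and odd families, sweeps the part of $\gamma_A$ living off $A':=\bigcup_kA_{2k}$ onto $A'$ by inner balayage to get a measure $\tilde\gamma$ concentrated on $\overline{A'}$ with $U^{\tilde\gamma}=1$ n.e.\ on $A'$, and then defers the key local estimate $c_\alpha(A_{2k})\leqslant M\tilde\gamma(\overline{A_{2k}})$ to Landkof; the even/odd separation and the balayage are needed there precisely because the mass adjacent to $A_{2k}$ must first be moved onto $\overline{A'}$ before the far-field bounds apply. You instead absorb the adjacent annuli into the competitor itself: the block $R_k^+$ of three annuli is separated from $(R_k^+)^c$ by a full annulus on each side, your two far-field estimates (tail of (\ref{1.3.10}) for the outer part; $\gamma_A(B(0,q^{k-1}))=o(q^{k(n-\alpha)})$, which does follow from summability of $q^{-j(n-\alpha)}\gamma_A(R_j)$ plus the geometric growth of $q^{j(n-\alpha)}$, for the inner part) are sound, and the triple overlap costs only a bounded factor in the final summation. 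This buys a self-contained argument that uses no balayage at all and does not outsource any step to \cite[pp.~282--283]{L} --- at the price of not illustrating the paper's main machinery, which is the stated purpose of the paper's proof. The one step you should write out carefully is the capacity comparison $\xi(\mathbb R^n)\geqslant c_\alpha(A_k)$ for $\xi$ with $U^\xi\geqslant1$ n.e.\ on $A_k$: since $\gamma_{A_k}$ is only carried by $\overline{A_k}$, the clean route is through compact $K\subset A_k$ (where $\gamma_K\in\mathcal E^+_K$ charges no set of the $F_\sigma$ exceptional set, so $c_\alpha(K)=\int U^{\gamma_K}\,d\xi\leqslant\xi(\mathbb R^n)$) followed by (\ref{cap-incr}); this is routine, not a gap.
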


\begin{proof}Assuming first that (ii) holds, write $\gamma_k:=\gamma_{A_k}$, where $A_k:=A\cap R_k$; this $\gamma_{A_k}$ exists according to Lemma~\ref{l-eq-finite}. It is seen from Lemma~\ref{l-eq-ar} that (i) will follow once we establish
\begin{equation}\label{xi}\xi:=\sum_{k\in\mathbb N}\,\gamma_k\in\Theta_A.\end{equation}
To this end, we first observe that $U^\xi\not\equiv\infty$, or equivalently (cf.\ (\ref{1.3.10}))
\[I:=\int_{|x|\geqslant q}\frac{d\,\xi(x)}{|x|^{n-\alpha}}<\infty.\]
Indeed,
\[I=\sum_{k\in\mathbb N}\,\int_{R_k}\frac{d\,\xi(x)}{|x|^{n-\alpha}}\leqslant\sum_{k\in\mathbb N}\,\int\frac{d\,\gamma_k(x)}{|x|^{n-\alpha}}\leqslant
\sum_{k\in\mathbb N}\,\frac{\gamma_k(\mathbb R^n)}{q^{k(n-\alpha)}}=\sum_{k\in\mathbb N}\,\frac{c_\alpha(A_k)}{q^{k(n-\alpha)}},\]
the last equality being valid by Lemma~\ref{l-eq-finite}(a); therefore, $I<\infty$ by (\ref{iii}). The proof of (\ref{xi}) is thus reduced to establishing $U^\xi\geqslant 1$ n.e.\ on $A$. Since the sets $R_k$, $k\in\mathbb N$, are Borel, this follows from $U^\xi\geqslant U^{\gamma_k}=1$ n.e.\ on $A\cap R_k$ by applying Lemma~\ref{subad}.

Assuming now that $\gamma_A$ exists, we complete the proof by showing
\begin{equation*}\label{ser}S_1:=\sum_{k\in\mathbb N}\,\frac{c_\alpha(A_{2k})}{q^{2k(n-\alpha)}}<\infty,\quad
S_2:=\sum_{k\in\mathbb N}\,\frac{c_\alpha(A_{2k-1})}{q^{(2k-1)(n-\alpha)}}<\infty\end{equation*}
(cf.\ (\ref{iii})). Since either of these series can be handled in the same manner as the other, we shall establish $S_1<\infty$. Write
\[A':=\bigcup_{k\in\mathbb N}\,A_{2k},\]
$\gamma':=\gamma_A|_{\overline{A'}}$, $\gamma'':=\gamma_A-\gamma'$, and
\[\tilde{\gamma}:=\gamma'+(\gamma'')^{A'},\] where the existence of the inner balayage $(\gamma'')^{A'}$ (cf.\ Definition~\ref{def-bal}) is justified by Theorem~\ref{th-bala-2} (compare with \cite[Proof of Theorem~5.1]{L}).  According to (\ref{eq-bala-f1}) (cf.\ Theorem~\ref{th-bala-2}), $U^{(\gamma'')^{A'}}=U^{\gamma''}$ n.e.\ on $A'$, and therefore
\begin{equation}\label{5.1}U^{\tilde{\gamma}}=U^{\gamma'+(\gamma'')^{A'}}=U^{\gamma'+\gamma''}=U^{\gamma_A}=1\text{ \ n.e.\ on\ }A'.\end{equation}

Noting that
\[\sum_{k\in\mathbb N}\,\frac{\tilde{\gamma}\bigl(\,\overline{A_{2k}}\,\bigr)}{q^{(2k+1)(n-\alpha)}}\leqslant\sum_{k\in\mathbb N}\,\int_{\overline{R_{2k}}}\frac{d\,\tilde{\gamma}(x)}{|x|^{n-\alpha}}\leqslant
\int_{|x|>1}\frac{d\,\tilde{\gamma}(x)}{|x|^{n-\alpha}}<\infty,\]
the last inequality being valid by (\ref{1.3.10}), we obtain
\[\sum_{k\in\mathbb N}\,\frac{\tilde{\gamma}\bigl(\,\overline{A_{2k}}\,\bigr)}{q^{2k(n-\alpha)}}<\infty.\]
Hence, $S_1<\infty$ will follow if we show
\[c_\alpha(A_{2k})\leqslant M\tilde{\gamma}\bigl(\,\overline{A_{2k}}\,\bigr)\text{ \ for all\ }k\in\mathbb N,\]
where $M\in(0,\infty)$ is independent of $k$. The proof of this is based on (\ref{5.1}) and runs in the same manner as in \cite[pp.~282--283]{L}.
\end{proof}

\begin{remark}\label{fin-cap} The finiteness of $c_\alpha(A)$ is sufficient for the existence of an inner equilibrium measure $\gamma_A$ (cf.\ Lemma~\ref{l-eq-finite}), but not necessary. This can be seen by comparing Theorem~\ref{l-eq-ex} with the following assertion.

\begin{lemma}[{\rm see \cite[Lemma~5.5]{L}}]\label{f-en} For\/ $A$ Borel, $c_\alpha(A)<\infty$ holds if and only if
\[\sum_{k\in\mathbb N}\,\frac{c_\alpha(A\cap R_k)}{q^{2k(n-\alpha)}}<\infty,\]
where\/ $R_k$, $k\in\mathbb N$, are defined in Theorem\/~{\rm\ref{l-eq-ex}} and\/ $q\in(1,\infty)$.
\end{lemma}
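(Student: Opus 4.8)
The plan is to run the argument in parallel with the proof of Theorem~\ref{l-eq-ex}, systematically replacing the role played there by the finiteness of the \emph{potential} $U^\xi$ (condition (\ref{1.3.10})) with the finiteness of the \emph{energy} (equivalently, of the total mass of the equilibrium measure). The bridge is the following reformulation: $c_\alpha(A)<\infty$ holds if and only if there is $\mu\in\mathcal E^+$ with $U^\mu\geqslant1$ n.e.\ on $A$, and then $c_\alpha(A)=\min\{E(\mu):\ \mu\in\mathcal E^+,\ U^\mu\geqslant1\text{ n.e.\ on }A\}$, the minimum being attained at $\mu=\gamma_A$. The `only if' part, together with the value of the minimum, is exactly Lemma~\ref{l-eq-finite}. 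For the `if' part I would fix such a $\mu$ and compare, for every compact $K\subset A$, with the equilibrium measure $\gamma_K$: since $\gamma_K\in\mathcal E^+$ is absolutely continuous and $U^\mu\geqslant1=U^{\gamma_K}$ holds n.e.\ on $K$, hence $\gamma_K$-a.e., the Cauchy--Schwarz inequality gives
\[c_\alpha(K)=\gamma_K(\mathbb R^n)=\int1\,d\gamma_K\leqslant\int U^\mu\,d\gamma_K=E(\mu,\gamma_K)\leqslant\sqrt{E(\mu)}\,\sqrt{c_\alpha(K)},\]
so that $c_\alpha(K)\leqslant E(\mu)$; letting $K\uparrow A$ and invoking (\ref{cap-incr}) yields $c_\alpha(A)\leqslant E(\mu)<\infty$.

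For the sufficiency of the summability condition I would reuse the measure $\xi:=\sum_k\gamma_{A_k}$ of the proof of Theorem~\ref{l-eq-ex}, where $A_k:=A\cap R_k$ and each $\gamma_{A_k}$ exists by Lemma~\ref{l-eq-finite}. As there, $U^\xi\geqslant U^{\gamma_{A_k}}=1$ n.e.\ on every $A_k$, hence $U^\xi\geqslant1$ n.e.\ on $A=\bigcup_kA_k$ by Lemma~\ref{subad}; by the reformulation it then suffices to show $E(\xi)<\infty$. The diagonal of $E(\xi)=\sum_{j,k}E(\gamma_{A_j},\gamma_{A_k})$ equals $\sum_kE(\gamma_{A_k})=\sum_kc_\alpha(A_k)$, which is precisely the quantity the criterion requires to be finite; for $j\ne k$ the supports of $\gamma_{A_j}$ and $\gamma_{A_k}$ lie at the geometrically separated scales $q^j,q^k$, so combining the Frostman bound $U^{\gamma_{A_k}}\leqslant1$ with this separation produces a decay of $E(\gamma_{A_j},\gamma_{A_k})$ in $|j-k|$, and a Schur type summation dominates the off-diagonal part by the diagonal. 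Thus the hypothesis forces $E(\xi)<\infty$ and hence $c_\alpha(A)\leqslant E(\xi)<\infty$.

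For the necessity I would argue exactly as in Theorem~\ref{l-eq-ex}, splitting $A$ into its even and odd annular parts $A'=\bigcup_kA_{2k}$ and $A''=\bigcup_kA_{2k+1}$ and treating them symmetrically. Setting $\gamma':=\gamma_A|_{\overline{A'}}$, $\gamma'':=\gamma_A-\gamma'$, and $\tilde\gamma:=\gamma'+(\gamma'')^{A'}$ (the inner balayage existing by Theorem~\ref{th-bala-2}), relation (\ref{eq-bala-f1}) gives $U^{\tilde\gamma}=U^{\gamma_A}=1$ n.e.\ on $A'$, whence the local comparison of \cite[pp.~282--283]{L} yields $c_\alpha(A_{2k})\leqslant M\,\tilde\gamma\bigl(\,\overline{A_{2k}}\,\bigr)$ with $M$ independent of $k$. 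Summing these inequalities and using the finiteness of the total mass of $\tilde\gamma$ (at our disposal precisely because $c_\alpha(A)<\infty$, by Lemma~\ref{l-eq-finite}) bounds the even part of the series; the odd part is identical.

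The main obstacle is the necessity direction. In Theorem~\ref{l-eq-ex} the weighted sums over the $\tilde\gamma\bigl(\,\overline{A_{2k}}\,\bigr)$ are controlled by the single integral $\int_{|x|>1}|x|^{\alpha-n}\,d\tilde\gamma<\infty$ coming from (\ref{1.3.10}); here one must instead extract a genuine \emph{lower} bound for $c_\alpha(A)$ out of the pieces $c_\alpha(A_k)$, i.e.\ a near-superadditivity of capacity across the separated scales $q^k$. This is exactly the content of the local estimate $c_\alpha(A_{2k})\leqslant M\,\tilde\gamma\bigl(\,\overline{A_{2k}}\,\bigr)$ combined with the balayage comparison of \cite[pp.~282--283]{L}, and it is where the domination principle (Theorem~\ref{dom}) and the properties (\ref{eq-bala-f1}) of $\tilde\gamma$ enter decisively; the scale-separation bookkeeping in the off-diagonal energy estimate of the sufficiency part is the only other point demanding care.
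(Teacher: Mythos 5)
The paper itself offers no proof of this lemma at all---it is quoted from Landkof \cite[Lemma~5.5]{L}---so there is no internal argument to compare yours against. More importantly, the statement as printed cannot be what is meant. Since $c_\alpha(A\cap R_k)\leqslant c_\alpha\bigl(\overline{B}(0,q^{k+1})\bigr)=q^{(k+1)(n-\alpha)}c_\alpha\bigl(\overline{B}(0,1)\bigr)$, the terms of the printed series are $O\bigl(q^{-k(n-\alpha)}\bigr)$, so the series converges for \emph{every} Borel set $A$; as printed, the lemma would thus assert that every Borel set has finite capacity (false already for $A=\{|x|\geqslant1\}$). It is also inconsistent with the paper's own use of it: combined with Theorem~\ref{l-eq-ex} it would yield ``existence of $\gamma_A$ implies $c_\alpha(A)<\infty$'', the exact opposite of Remark~\ref{fin-cap}, and it contradicts Example~\ref{ex-thin} with $\varrho$ given by (\ref{c2}), where $\gamma_A$ exists while $c_2(A)=\infty$. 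The statement that is both true and consistent with the rest of the paper is the \emph{unweighted} one: $c_\alpha(A)<\infty$ if and only if $\sum_k c_\alpha(A\cap R_k)<\infty$. (The factor $q^{2k(n-\alpha)}$ belongs to the inverted formulation: pieces at scale $q^{-k}$ have capacities smaller by exactly that factor, cf.\ (\ref{est}).)

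Your proof is, in substance, a proof of this corrected statement, not of the printed one, and the point where you silently switch is the sufficiency step: you call the diagonal $\sum_k E(\gamma_{A_k})=\sum_k c_\alpha(A_k)$ ``precisely the quantity the criterion requires to be finite''. For the printed criterion that is false---it only requires $\sum_k c_\alpha(A_k)/q^{2k(n-\alpha)}<\infty$, which, being automatic, bounds nothing---and no argument can close that gap, because the printed implication is itself false. Judged as a proof of the corrected lemma, your plan is essentially sound. The bridge ($c_\alpha(A)<\infty$ iff some $\mu\in\mathcal E^+$ has $U^\mu\geqslant1$ n.e.\ on $A$, with $c_\alpha(A)\leqslant E(\mu)$) is proved correctly. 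The necessity via $\tilde\gamma=\gamma'+(\gamma'')^{A'}$ is sound, and your observation that finiteness of $\tilde\gamma(\mathbb R^n)$ (from Lemma~\ref{l-eq-finite}(a) and (\ref{t-mass})) replaces the role of (\ref{1.3.10}) is exactly the right structural point; note that what you then obtain, $\sum_k c_\alpha(A_{2k})\leqslant M\,\tilde\gamma(\mathbb R^n)<\infty$ (the closed even annuli being disjoint), is precisely the unweighted bound. In the sufficiency three repairs are needed: adjacent annuli $R_k,R_{k+1}$ are \emph{not} separated, so the $|j-k|=1$ off-diagonal terms require Cauchy--Schwarz, $E(\gamma_{A_j},\gamma_{A_k})\leqslant\tfrac12\bigl(c_\alpha(A_j)+c_\alpha(A_k)\bigr)$, rather than a distance bound; the Schur summation for $|j-k|\geqslant2$ needs the growth bound $c_\alpha(A_j)\leqslant Cq^{j(n-\alpha)}$ to produce geometric decay relative to the diagonal; and the part of $A$ inside $\overline{B}(0,q)$, which your $\xi$ does not see, must be added (it has finite capacity). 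Alternatively, the entire sufficiency direction is one line: for $A$ Borel, inner and outer capacities coincide and the latter is countably subadditive, so $c_\alpha(A)\leqslant c_\alpha\bigl(A\cap\overline{B}(0,q)\bigr)+\sum_k c_\alpha(A\cap R_k)<\infty$, with no energy estimates at all.
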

\end{remark}

\begin{figure}[htbp]
\begin{center}
\vspace{-.4in}
\hspace{1in}\includegraphics[width=4in]{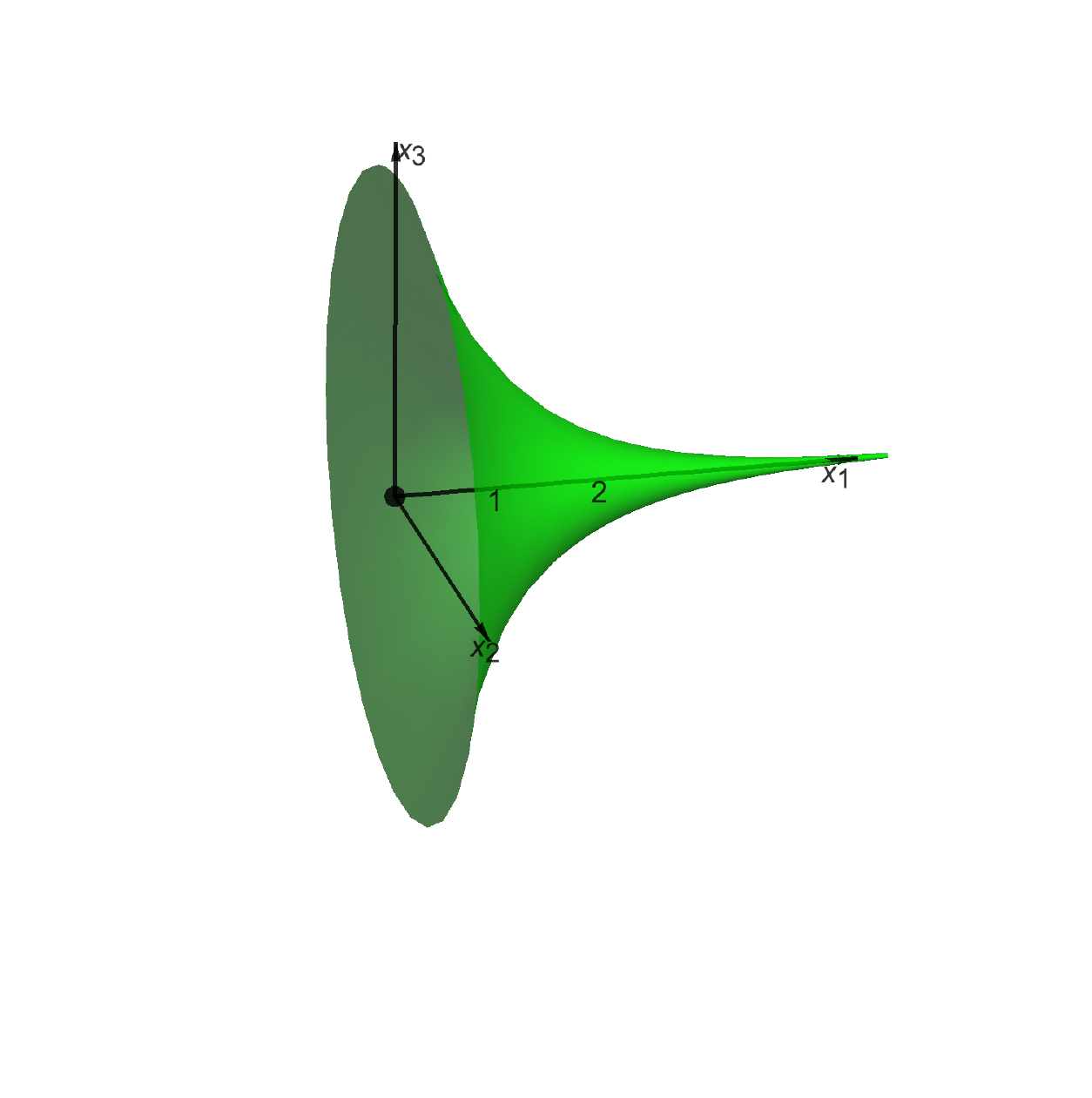}
\vspace{-.8in}
\caption{$A:=\bigl\{0\leqslant x_1<\infty,  \ x_2^2+x_3^2\leqslant\rho^2(x_1)\bigr\}$
with $\rho(x_1)=\exp(-x_1)$.\vspace{-.1in}}
\label{Fig1}
\end{center}
\end{figure}

The next example can be obtained from Theorem~\ref{l-eq-ex} and Lemma~\ref{f-en} by analyzing estimates in \cite[Chapter~V, Section~1, Example]{L}.

\begin{example}\label{ex-thin}Let $n=3$ and $\alpha=2$. Define $A$ to be a rotation body
\begin{equation}\label{defA}A:=\bigl\{x\in\mathbb R^3: \ 0\leqslant x_1<\infty, \ x_2^2+x_3^2\leqslant\varrho^2(x_1)\bigr\},\end{equation}
where $\varrho$ is given by one of the following three formulae:
\begin{align}
\label{c1}\varrho(x_1)&=x_1^{-s}\text{ \ with \ }s\in[0,\infty),\\
\label{c2}\varrho(x_1)&=\exp(-x_1^s)\text{ \ with \ }s\in(0,1],\\
\label{c3}\varrho(x_1)&=\exp(-x_1^s)\text{ \ with \ }s\in(1,\infty).
\end{align}
Then $\gamma_A$ does not exist if $\varrho$ is defined by (\ref{c1}), $\gamma_A$ exists but $c_2(A)=\infty$ if $\varrho$ is given by (\ref{c2})
(see Figure~\ref{Fig1}),
and finally $c_2(A)<\infty$ if (\ref{c3}) holds.
\end{example}

\section{Wiener type criterion of inner $\alpha$-irregularity}\label{sec-wiener}

\begin{definition}\label{def-reg}A point $y\in\mathbb R^n$ is said to be \emph{inner\/ $\alpha$-irregular\/} for $A$ if $y\in\overline{A}$ and $\varepsilon_y^A\ne\varepsilon_y$, where $\varepsilon_y^A$ is the inner balayage of $\varepsilon_y$ to $A$ (cf.\ Definition~\ref{def-bal}). All other points of $\overline{A}$ are said to be \emph{inner\/ $\alpha$-regular\/} for $A$.\end{definition}

\begin{remark}\label{en-fin}For every $y\notin\overline{A}$, $\varepsilon_y^A\in\mathcal E^+$, and therefore $\varepsilon_y^A\ne\varepsilon_y$. Indeed, in view of (\ref{eq-bala-f2}) (cf.\ Theorem~\ref{th-bala-2}),
\[U^{\varepsilon_y^A}(x)\leqslant U^{\varepsilon_y}(x)=|x-y|^{\alpha-n}\leqslant\max_{z\in\overline{A}}\,|z-y|^{\alpha-n}<\infty\text{ \ for all\ }x\in\overline{A}.\]
Since $\varepsilon_y^A(\mathbb R^n)\leqslant1$ according to (\ref{t-mass}), $E(\varepsilon_y^A)<\infty$ follows.\end{remark}

\begin{lemma}\label{l-regg} $y$ is inner\/ $\alpha$-regular for\/ $A$ {\rm(}if and\/{\rm)} only if
\begin{equation}\label{eq-reg1}U^{\mu^A}(y)=U^\mu(y)\text{ \ for every\ }\mu\in\mathfrak M^+.\end{equation}
\end{lemma}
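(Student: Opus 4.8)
The plan is to prove Lemma~\ref{l-regg} by exploiting the symmetry identity (\ref{alternative2}) for inner balayage together with the already-established pointwise relation $U^{\mu^A}=U^\mu$ n.e.\ on $A$ (cf.\ Theorem~\ref{th-bala-2}). The statement asserts the equivalence of inner $\alpha$-regularity of $y$ for $A$ with the identity $U^{\mu^A}(y)=U^\mu(y)$ holding for \emph{every} $\mu\in\mathfrak M^+$; recall that regularity means $y\in\overline{A}$ and $\varepsilon_y^A=\varepsilon_y$ (Definition~\ref{def-reg}). The only implication that needs proof is ``regular $\Rightarrow$ (\ref{eq-reg1})'', since the parenthetical ``if and'' signals that the converse is deferred.

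First I would rewrite the potential value $U^{\mu^A}(y)$ using the defining symmetry. Observe that $U^{\mu^A}(y)=\int|x-y|^{\alpha-n}\,d\mu^A(x)=E(\mu^A,\varepsilon_y)$, and dually $U^\mu(y)=E(\mu,\varepsilon_y)$. Applying Corollary~\ref{th-alt2} (identity (\ref{alternative2})) with $\theta:=\varepsilon_y\in\mathfrak M^+$ yields
\[E(\mu^A,\varepsilon_y)=E(\mu,\varepsilon_y^A),\]
so that $U^{\mu^A}(y)=\int U^{\varepsilon_y^A}\,d\mu=E(\mu,\varepsilon_y^A)$. The key step is then to note that if $y$ is inner $\alpha$-regular, so that $\varepsilon_y^A=\varepsilon_y$, the right-hand side collapses to $E(\mu,\varepsilon_y)=U^\mu(y)$, giving (\ref{eq-reg1}) for the given $\mu$. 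Since $\mu\in\mathfrak M^+$ was arbitrary, the forward implication follows at once.

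The one genuine subtlety, and the step I expect to be the main obstacle, is the legitimacy of writing $U^{\mu^A}(y)=E(\mu^A,\varepsilon_y)$ and then invoking (\ref{alternative2}) when $\varepsilon_y$ need not have finite energy (indeed $E(\varepsilon_y)=\infty$). Corollary~\ref{th-alt2} is stated precisely for $\theta\in\mathfrak M^+$ rather than $\theta\in\mathcal E^+$, so it does apply to $\theta=\varepsilon_y$; hence the Fubini-type manipulations are justified by that corollary together with the positivity of the kernel, which guarantees that all the integrals $\int U^{\varepsilon_y^A}\,d\mu$ and $\int U^{\mu^A}\,d\varepsilon_y$ are well defined in $[0,\infty]$. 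I would therefore present the proof as a short chain of equalities,
\[U^{\mu^A}(y)=E(\mu^A,\varepsilon_y)=E(\mu,\varepsilon_y^A)=E(\mu,\varepsilon_y)=U^\mu(y),\]
with each equality annotated: the first and last by the definition of potential and mutual energy, the second by Corollary~\ref{th-alt2}, and the third by the regularity hypothesis $\varepsilon_y^A=\varepsilon_y$. This disposes of the implication enclosed in the statement, leaving the converse (the ``only if'' direction, whose proof presumably proceeds by specializing to a $\mu$ detecting the discrepancy $\varepsilon_y^A\ne\varepsilon_y$, e.g.\ via $U^{\varepsilon_y^A}\ne U^{\varepsilon_y}$ and a suitable approximation) to the bracketed portion that the lemma's phrasing defers.
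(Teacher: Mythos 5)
Your proof is correct and coincides with the paper's own argument: both apply the symmetry identity (\ref{alternative2}) with $\theta=\varepsilon_y$ to obtain the chain $U^{\mu^A}(y)=E(\mu^A,\varepsilon_y)=E(\mu,\varepsilon_y^A)=E(\mu,\varepsilon_y)=U^\mu(y)$, using the regularity hypothesis $\varepsilon_y^A=\varepsilon_y$ at the third step. You also correctly observe that only the ``only if'' implication is being proved, exactly as in the paper.
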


\begin{proof}If $y$ is inner $\alpha$-regular for $A$, then (\ref{alternative2}) applied to $\theta=\varepsilon_y$ gives
\[U^{\mu^A}(y)=E(\mu^A,\varepsilon_y)=E(\mu,\varepsilon_y^A)=E(\mu,\varepsilon_y)=U^\mu(y)\]
for every $\mu\in\mathfrak M^+$, which is our claim.\end{proof}

Let $A_I$ consist of all inner $\alpha$-ir\-reg\-ular points for $A$.

\begin{theorem}[{\rm Wiener type criterion}]\label{Wiener}$y\in A_I$ if and only if
\begin{equation}\label{w}\sum_{k\in\mathbb N}\,\frac{c_\alpha(A_k)}{q^{k(n-\alpha)}}<\infty,\end{equation}
where\/ $A_k:=A\cap\bigl\{x\in\mathbb R^n: q^{k+1}<|x-y|\leqslant q^k\bigr\}$ and\/ $q\in(0,1)$.
\end{theorem}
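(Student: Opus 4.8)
The plan is to connect inner $\alpha$-irregularity at $y$ with the existence of an inner equilibrium measure for the \emph{inverse} $A^*$ of $A\setminus\{y\}$ relative to the sphere $S(y,1)$, and then invoke the already-established Wiener criterion for equilibrium measures (Theorem~\ref{l-eq-ex}). Concretely, I would first recall from the Introduction that $\varepsilon_y^A\ne\varepsilon_y$ is asserted to be equivalent to the existence of $\gamma_{A^*}$ (this is Theorem~\ref{nands}, stated in the Introduction but appearing after the present statement in the development; assuming it, the burden reduces to a bookkeeping computation). Thus the first key step is: $y\in A_I$ iff $y\in\overline{A}$ and $\gamma_{A^*}$ exists. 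The second key step is to apply Theorem~\ref{l-eq-ex} to the set $A^*$, which gives existence of $\gamma_{A^*}$ iff $\sum_k c_\alpha(A^*\cap R_k)/q_0^{k(n-\alpha)}<\infty$ for some (any) $q_0\in(1,\infty)$, where $R_k=\{q_0^k\le|x|<q_0^{k+1}\}$ \emph{centered at the origin}. The remaining work is to translate this condition, living on $A^*$ near the origin, back into the stated condition on $A$ near $y$.

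The heart of the proof is therefore the Kelvin-transform change of variables. Writing $x^*=y+(x-y)/|x-y|^2$ for the inversion in $S(y,1)$, I would use the classical transformation law for Riesz capacity under inversion: for a set $Q$ contained in an annulus $\{q^{k+1}<|x-y|\le q^k\}$, its image $Q^*$ lies in the annulus $\{q^{-k}\le|x-y|< q^{-(k+1)}\}$, and the capacities are related by a factor comparable to the power of the distance scale. More precisely, the Kelvin transform multiplies potentials by $|x-y|^{\alpha-n}$ (after a Kelvin transform of the measure), and on a dyadic-type annulus at distance $\sim q^k$ from $y$ this produces the weight $q^{k(n-\alpha)}$ that appears in~(\ref{w}). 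The plan is to set $q_0:=1/q\in(1,\infty)$ so that the annuli $A_k=A\cap\{q^{k+1}<|x-y|\le q^k\}$ map onto annuli $A^*\cap R_{k}$ (up to reindexing and the harmless removal of the single point $y$, which carries zero capacity and does not affect any term), and to check that
\[
\frac{c_\alpha(A^*\cap R_k)}{q_0^{k(n-\alpha)}}\asymp\frac{c_\alpha(A_k)}{q^{k(n-\alpha)}}
\]
with implied constants independent of $k$. Summing over $k$ then shows the two series converge or diverge together, giving the equivalence of~(\ref{w}) with the existence of $\gamma_{A^*}$, and hence with $y\in A_I$.

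The main obstacle I anticipate is making the capacity transformation law under inversion fully rigorous with explicit, $k$-uniform constants. The difficulty is twofold: first, the Kelvin transform distorts distances, so an annulus at scale $q^k$ centered at $y$ does not map to a \emph{single} clean annulus $R_k$ centered at the origin but to one that is comparable up to bounded overlap; one must absorb this into the comparison constants and possibly split each $A_k$ across two consecutive $R_j$'s (this is exactly the kind of reindexing that is harmless for convergence of a series with geometrically weighted terms). Second, one needs the precise statement that inner capacity transforms under Kelvin inversion with the weight $|x-y|^{\alpha-n}$; this should follow from the behaviour of the Riesz kernel under inversion together with the definition $c_\alpha(Q)=1/\inf E(\mu)$ over unit measures, but verifying it for \emph{inner} capacity (rather than just for compact sets) requires the approximation~(\ref{cap-incr}) and care that the Kelvin image of a compact exhausting family of $A_k$ exhausts $A^*\cap R_k$. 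I would isolate this capacity-inversion estimate as a separate lemma, prove it first for compacts via the explicit Kelvin transform of measures and potentials, and then pass to inner capacity by~(\ref{cap-incr}); once that lemma is in hand, the summation argument above is routine.
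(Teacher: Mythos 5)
Your proposal follows the paper's own route exactly: the paper proves Theorem~\ref{Wiener} by combining Lemma~\ref{l1} (the capacity-inversion estimate (\ref{est}), derived from (\ref{inv}), together with Theorem~\ref{l-eq-ex} applied to $A^*$) with Lemmas~\ref{l2} and~\ref{l3}, which together constitute precisely the statement of Theorem~\ref{nands} that you invoke, and whose proofs do not depend on Theorem~\ref{Wiener}, so there is no circularity. The only simplification you are missing is that the inversion $J_y$ maps each annulus $\bigl\{q^{k+1}<|x-y|\leqslant q^k\bigr\}$ exactly onto the annulus $\bigl\{q^{-k}\leqslant|x^*-y|<q^{-k-1}\bigr\}$ centered at $y$, so no splitting across consecutive annuli is required once one notes that the series criterion of Theorem~\ref{l-eq-ex} is insensitive to the choice of center.
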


Theorem~\ref{Wiener} follows directly from Lemmas~\ref{l1}--\ref{l3} below. Theorem~\ref{Wiener} implies, in turn, the next two assertions (see  Section~\ref{proof} for the proof of the latter).

\begin{corollary}\label{reg-inner}$A_I\subset\partial A$.\end{corollary}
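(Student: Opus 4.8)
The plan is to prove the contrapositive via the Wiener type criterion just stated. Since $A_I\subset\overline{A}$ by Definition~\ref{def-reg}, and $\partial A=\overline{A}\setminus A^\circ$ (where $A^\circ$ denotes the Euclidean interior of $A$), it suffices to show that every point $y\in A^\circ$ is inner $\alpha$-regular for $A$, i.e.\ that $y\notin A_I$. By Theorem~\ref{Wiener}, this amounts to showing that the series in (\ref{w}) \emph{diverges} whenever $y$ is an interior point of $A$.

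First I would exploit the geometry near an interior point. Fix $y\in A^\circ$ and choose $r>0$ with $B(y,r)\subset A$. With $q\in(0,1)$ and $A_k:=A\cap\{x:q^{k+1}<|x-y|\leqslant q^k\}$ as in Theorem~\ref{Wiener}, I observe that for every $k$ large enough that $q^k<r$ the entire spherical shell $\{x:q^{k+1}<|x-y|\leqslant q^k\}$ lies inside $B(y,r)\subset A$; hence for all such $k$ the intersection with $A$ is inessential and $A_k$ coincides with the full shell.

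Next I would compute the capacity of a full shell by homogeneity. The Riesz kernel satisfies $|tx-ty|^{\alpha-n}=t^{\alpha-n}|x-y|^{\alpha-n}$ for $t>0$, so pushing a unit measure forward under $x\mapsto y+tx$ multiplies its energy by $t^{\alpha-n}$; consequently $c_\alpha$ is translation invariant and homogeneous of degree $n-\alpha$, that is, $c_\alpha\bigl(y+tE\bigr)=t^{\,n-\alpha}c_\alpha(E)$. Writing the full shell as $y+q^k A_0$ with the fixed annulus $A_0:=\{z:q<|z|\leqslant1\}$, this gives $c_\alpha(A_k)=q^{k(n-\alpha)}c_\alpha(A_0)$ for all large $k$. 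Since $A_0$ has nonempty interior, $\mathcal E^+_{A_0}\ne\{0\}$, whence $c_\alpha(A_0)>0$ by Lemma~\ref{2.3.1}. Therefore each summand $c_\alpha(A_k)/q^{k(n-\alpha)}$ equals the positive constant $c_\alpha(A_0)$ for all large $k$, so the series in (\ref{w}) diverges, and Theorem~\ref{Wiener} yields $y\notin A_I$. As $y\in A^\circ$ was arbitrary, $A_I\cap A^\circ=\varnothing$, and combined with $A_I\subset\overline{A}$ this gives $A_I\subset\partial A$.

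The only nonroutine point is the homogeneity and translation invariance of $c_\alpha$ together with the positivity $c_\alpha(A_0)>0$; both are standard consequences of the scaling of the Riesz kernel and of Lemma~\ref{2.3.1}, so I expect no real obstacle. Alternatively, one could avoid the exact scaling identity by merely bounding $c_\alpha(A_k)$ below by the capacity of a single ball of radius $\tfrac12(1-q)q^k$ inscribed in the shell, which is again of order $q^{k(n-\alpha)}$ and suffices for divergence.
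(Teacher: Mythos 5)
Your argument is correct and is precisely the intended one: the paper gives no separate proof of this corollary, asserting only that it follows directly from Theorem~\ref{Wiener}, and your computation (full shells around an interior point have capacity exactly of order $q^{k(n-\alpha)}$ by translation invariance and the $t^{\,n-\alpha}$-homogeneity of $c_\alpha$, forcing divergence of the series in (\ref{w})) supplies the routine details behind that assertion. No issues.
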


\begin{theorem}\label{irreg-com}$c_\alpha(A_I\cap A)=0$.\end{theorem}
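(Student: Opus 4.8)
The plan is to exhibit a \emph{single} measure $\mu\in\mathcal E^+$ whose balayage defect set already contains every point of $A_I\cap A$; since that defect set has inner capacity zero by construction, the theorem follows at once. Concretely, I would fix $\mu\in\mathcal E^+$ having a density with respect to Lebesgue measure that is strictly positive on all of $\mathbb R^n$ (a normalized Gaussian serves: its Riesz potential is bounded and continuous, so $E(\mu)<\infty$, and $\mu$ charges every set of positive Lebesgue measure). Put $N:=\{y\in A:\ U^{\mu^A}(y)<U^\mu(y)\}$. The first observation is that $c_\alpha(N)=0$: indeed $U^{\mu^A}\leqslant U^\mu$ everywhere by (\ref{eq-bala-f2}), while $U^{\mu^A}=U^\mu$ n.e.\ on $A$ by (\ref{eq-bala-f1}), valid for $\mu\in\mathcal E^+$ and $A$ arbitrary by Corollary~\ref{cor-eq-arb}; hence $N$ is exactly the exceptional set in that n.e.\ statement, and so has inner capacity zero.

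The substance of the proof is the inclusion $A_I\cap A\subset N$. Fix $y\in A_I\cap A$; by Definition~\ref{def-reg} this means $\varepsilon_y^A\ne\varepsilon_y$. I would deduce that the inequality $U^{\varepsilon_y^A}\leqslant U^{\varepsilon_y}$, which holds everywhere by (\ref{eq-bala-f2}), is in fact \emph{strict} on a set of positive Lebesgue measure; for otherwise $U^{\varepsilon_y^A}=U^{\varepsilon_y}$ Lebesgue-a.e., whence their convolutions with $m^{(r)}$ coincide for every $r>0$ and, letting $r\downarrow0$ and invoking \cite[Theorem~1.11]{L}, $\varepsilon_y^A=\varepsilon_y$, contrary to the choice of $y$ (this is precisely the uniqueness device used in the proof of Corollary~\ref{th-f-contu}). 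Because $\mu$ has an everywhere positive density, integrating against $\mu$ then gives $\int U^{\varepsilon_y^A}\,d\mu<\int U^{\varepsilon_y}\,d\mu$. Finally I would rewrite both sides via the symmetry identity of Corollary~\ref{th-alt2}, applied with $\theta=\varepsilon_y$: namely $\int U^{\varepsilon_y^A}\,d\mu=E(\mu,\varepsilon_y^A)=E(\mu^A,\varepsilon_y)=U^{\mu^A}(y)$ and $\int U^{\varepsilon_y}\,d\mu=E(\mu,\varepsilon_y)=U^\mu(y)$. Hence $U^{\mu^A}(y)<U^\mu(y)$, i.e.\ $y\in N$. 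Monotonicity of inner capacity now yields $c_\alpha(A_I\cap A)\leqslant c_\alpha(N)=0$, as required.

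The only genuinely delicate step is the passage from $\varepsilon_y^A\ne\varepsilon_y$ to strictness of the potential inequality on a set of positive Lebesgue measure, together with its coupling to a reference measure $\mu$ that sees every such set; once $\mu$ is chosen with a positive density this becomes routine, the remaining ingredients being the already established (\ref{eq-bala-f1}), (\ref{eq-bala-f2}) and the symmetry Corollary~\ref{th-alt2}. I note that this argument bypasses the Wiener criterion entirely. Alternatively, one could reduce to a compact $F\subset A_I\cap A$ (using that $c_\alpha$ is the supremum of $c_\alpha(K)$ over compact $K\subset A_I\cap A$), observe via Theorem~\ref{Wiener} and monotonicity of $c_\alpha$ that every point of such $F$ is inner $\alpha$-irregular for $F$ itself, and then use the equilibrium measure $\gamma_F$ of Lemma~\ref{l-eq-finite} to place $F$ inside $\{U^{\gamma_F}<1\}$, a set of inner capacity zero; but the single-measure approach above is shorter and self-contained, so that is the route I would take.
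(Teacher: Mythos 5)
Your main argument is correct, and it takes a genuinely different route from the paper. The paper localizes to $A^{(k)}:=A\cap B(0,k)$ (so that $\gamma_{A^{(k)}}$ exists by Lemma~\ref{l-eq-finite}) and then combines Lemma~\ref{reg-com} --- $U^{\gamma_{A^{(k)}}}=1$ at the inner $\alpha$-regular points of $\overline{A^{(k)}}$, proved via the Kelvin transform and the Wiener criterion --- with the n.e.\ identity (\ref{eq-ex0}). You instead fix a single reference measure $\mu\in\mathcal E^+$ with everywhere positive density and show $A_I\cap A\subset N:=\{y\in A:\ U^{\mu^A}(y)<U^\mu(y)\}$, a set of inner capacity zero by Corollary~\ref{cor-eq-arb}. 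The key step, that $\varepsilon_y^A\ne\varepsilon_y$ forces $U^{\varepsilon_y^A}<U^{\varepsilon_y}$ on a set of positive Lebesgue measure, is correctly justified by the same convolution/uniqueness device as in Corollary~\ref{th-f-contu}, and the passage to $U^{\mu^A}(y)<U^\mu(y)$ via Corollary~\ref{th-alt2} with $\theta=\varepsilon_y$ is sound (the subtraction is legitimate because the Gaussian potential is bounded, so $U^\mu(y)<\infty$). Your proof thus needs neither equilibrium measures, nor the Kelvin transform, nor Theorem~\ref{Wiener}, and as a by-product it supplies, for this particular $\mu$, the unproved `if' half of Lemma~\ref{l-regg}: irregularity at $y$ is already detected by one well-chosen measure. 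What the paper's route buys is the connection to the equilibrium potential (Lemma~\ref{reg-com}), which it reuses elsewhere (Theorem~\ref{desc-eq}, Theorem~\ref{bal-mass-th'}). One caution concerning only your alternative sketch at the end: it rests on the implication ``$y$ inner irregular for $F$ implies $U^{\gamma_F}(y)<1$'', which is the converse of Lemma~\ref{reg-com} and is nowhere established in the paper; your main route wisely sidesteps this point, so I would indeed keep the single-measure argument.
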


\subsection{Auxiliary assertions}\label{aux} For any $y\in\mathbb R^n$, define the inversion $J_y$ with respect to $S(y,1)$ mapping each point $x\ne y$ to the point $x^*$ on the ray through $x$ issuing from $y$ which is determined uniquely by
\[|x-y|\cdot|x^*-y|=1.\]
This is a homeomorphism of $\mathbb R^n\setminus\{y\}$ onto itself; furthermore,
\begin{equation}\label{inv}|x^*-z^*|=\frac{|x-z|}{|x-y||z-y|}\text{ \ for all\ }x,z\in\mathbb R^n\setminus\{y\}.\end{equation}
It can be extended to a homeomorphism of the one-point compactification $\overline{\mathbb R^n}$ of $\mathbb R^n$ onto itself such that $y$ and the point at infinity are mapped to each other.

In Lemmas~\ref{l1}--\ref{l3} below, $y\in\mathbb R^n$ is fixed and $A^*$ the $J_y$-im\-age of $A\setminus\{y\}$.

\begin{lemma}\label{l1} Relation\/ {\rm(\ref{w})} holds if and only if an inner equilibrium measure\/ $\gamma_{A^*}$ of\/ $A^*$ exists.\end{lemma}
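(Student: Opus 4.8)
The plan is to translate the Wiener-type series condition (\ref{w}), which concerns the capacities of the dyadic annular pieces $A_k$ of $A$ centered at $y$, into the equilibrium-existence criterion of Theorem~\ref{l-eq-ex} applied to the inverted set $A^*$. The inversion $J_y$ with respect to $S(y,1)$ maps the annulus $\{q^{k+1}<|x-y|\leqslant q^k\}$ (with $q\in(0,1)$) onto the annulus $\{q^{-k}\leqslant|x^*-y|<q^{-(k+1)}\}$, so the pieces $A_k$ of $A$ correspond precisely to annular pieces of $A^*$ at scales $q^{-k}$. Thus I expect (\ref{w}) to be, after the substitution $j\leftrightarrow -k$ or $j\leftrightarrow k+1$, exactly the series (\ref{iii}) for $A^*$ with the base $Q:=q^{-1}\in(1,\infty)$. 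By Theorem~\ref{l-eq-ex}, that series condition for $A^*$ is equivalent to the existence of $\gamma_{A^*}$, which is the desired conclusion.

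First I would set up the correspondence between the annuli: if $x\in A_k$, meaning $q^{k+1}<|x-y|\leqslant q^k$, then $x^*\in A^*$ satisfies $q^{-k}\leqslant|x^*-y|<q^{-(k+1)}$ by the defining relation $|x-y|\cdot|x^*-y|=1$. Writing $Q:=1/q\in(1,\infty)$, the set $A^*\cap\{Q^{\,j}\leqslant|x^*-y|<Q^{\,j+1}\}$ is precisely $(A_j)^*$, the $J_y$-image of $A_j$. The decisive step, and the one I expect to be the main obstacle, is to relate $c_\alpha(A_k)$ to $c_\alpha\bigl((A_k)^*\bigr)$. The Kelvin transform does not preserve capacity, but it distorts it in a controlled, scale-dependent way: since $J_y$ acts almost like a similarity with ratio comparable to $q^{-k}$ (equivalently $Q^{\,k}$) on the whole annulus $R_k$ — all points there having $|x-y|$ comparable to $q^k$ — one obtains two-sided estimates of the form
\[
m^{-1}\,q^{-k(n-\alpha)}\,c_\alpha(A_k)\leqslant c_\alpha\bigl((A_k)^*\bigr)\leqslant m\,q^{-k(n-\alpha)}\,c_\alpha(A_k)
\]
with a constant $m\in(1,\infty)$ independent of $k$. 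I would derive this by tracking how the Kelvin transform of an admissible measure in the energy/capacity extremal problem rescales, using the identity (\ref{inv}) for the transformed kernel together with the near-constancy of the factors $|x-y|$, $|z-y|$ across a single annulus.

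With these estimates in hand the argument closes cleanly. The general term of the series (\ref{iii}) for $A^*$ with base $Q=1/q$ is
\[
\frac{c_\alpha\bigl(A^*\cap\{Q^{\,k}\leqslant|x^*-y|<Q^{\,k+1}\}\bigr)}{Q^{\,k(n-\alpha)}}
=\frac{c_\alpha\bigl((A_k)^*\bigr)}{q^{-k(n-\alpha)}},
\]
which by the two-sided bound above is comparable, uniformly in $k$, to $c_\alpha(A_k)/q^{k(n-\alpha)}$, the general term of (\ref{w}). Hence the two series converge or diverge together, so (\ref{w}) holds if and only if the Wiener series (\ref{iii}) for $A^*$ converges. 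Applying Theorem~\ref{l-eq-ex} to $A^*$ (which is a proper subset of $\mathbb R^n$; the case $c_\alpha(A^*)=0$ is handled separately, as then $\gamma_{A^*}$ trivially exists and both series vanish) converts the latter into the existence of $\gamma_{A^*}$, completing the proof. The one point demanding care is the uniformity of the capacity-distortion constant $m$ across all scales $k$, which is exactly where the homogeneity of the Riesz kernel and the annular confinement of the support are essential.
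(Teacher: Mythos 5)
Your strategy is exactly the paper's: invert the annular decomposition, estimate the capacity distortion of $J_y$ on each annulus, and feed the resulting series into Theorem~\ref{l-eq-ex} applied to $A^*$ (after translating $y$ to the origin). However, the distortion estimate you state has the wrong exponent, and as written your final comparison does not close. By (\ref{inv}), for $x,z\in A_k$ one has $|x^*-z^*|=|x-z|/(|x-y|\,|z-y|)$ with both $|x-y|$ and $|z-y|$ lying in $(q^{k+1},q^k]$, so on $A_k$ the inversion acts as a near-similarity of ratio comparable to $q^{-2k}$, not $q^{-k}$ (the differential of an inversion at distance $r$ from the center has norm $r^{-2}$). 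The correct two-sided bound, which is (\ref{est}) in the paper and follows from \cite[Remark to Theorem~2.9]{L}, is
\[q^{-2k(n-\alpha)}c_\alpha(A_k)\leqslant c_\alpha(A_k^*)\leqslant q^{-(2k+2)(n-\alpha)}c_\alpha(A_k).\]
Your bound with the factor $q^{-k(n-\alpha)}$ is actually false for large $k$ whenever $c_\alpha(A_k)>0$, since the true ratio $c_\alpha(A_k^*)/c_\alpha(A_k)$ is at least $q^{-2k(n-\alpha)}$ and eventually exceeds $m\,q^{-k(n-\alpha)}$ for any fixed $m$. Moreover, if one inserts your stated bound into your displayed computation, the general term $c_\alpha(A_k^*)/Q^{k(n-\alpha)}$ comes out comparable to $c_\alpha(A_k)$ rather than to $c_\alpha(A_k)/q^{k(n-\alpha)}$, so the termwise comparability of the two series would not follow; the conclusion you assert is the one produced by the \emph{corrected} exponent, not by your estimate.

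With the exponent fixed, everything goes through and coincides with the paper's proof: $q^{k(n-\alpha)}c_\alpha(A_k^*)$ is uniformly comparable to $c_\alpha(A_k)/q^{k(n-\alpha)}$, so (\ref{w}) is equivalent to the convergence of $\sum_k c_\alpha(A_k^*)/Q^{k(n-\alpha)}$ with $Q:=1/q>1$, which by Theorem~\ref{l-eq-ex} is equivalent to the existence of $\gamma_{A^*}$. The side case $c_\alpha(A^*)=0$ that you propose to treat separately cannot occur: the standing assumption $c_\alpha(A)>0$ together with the fact that $J_y$ preserves sets of zero inner capacity gives $c_\alpha(A^*)>0$.
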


\begin{proof} Let $q\in(0,1)$ and $A_k$ be as in Theorem~\ref{Wiener}. It follows from (\ref{inv}) that
\[q^{-2k}|x-z|\leqslant|x^*-z^*|\leqslant q^{-2k-2}|x-z|\text{ \ for any\ }x,z\in A_k,\]
and hence, by \cite[Remark to Theorem~2.9]{L},
\begin{equation}\label{est}q^{-2k(n-\alpha)}c_\alpha(A_k)\leqslant c_\alpha(A_k^*)\leqslant q^{-(2k+2)(n-\alpha)}c_\alpha(A_k),\end{equation}
where \[A_k^*:=J_y(A_k)=A^*\cap\bigl\{x\in\mathbb R^n:\ q^{-k}\leqslant|x-y|<q^{-k-1}\bigr\}.\]
Therefore, (\ref{w}) holds if and only if
\[\sum_{k\in\mathbb N}\,q^{k(n-\alpha)}c_\alpha(A_k^*)<\infty,\]
which according to Theorem~\ref{l-eq-ex} is equivalent to the existence of $\gamma_{A^*}$.\end{proof}

To each $\nu\in\mathfrak M^+$ with $\nu(\{y\})=0$ we assign the \emph{Kelvin transform\/}
$\nu^*=\mathcal K_y\nu=\mathcal K_y(\nu)\in\mathfrak M^+$ (see \cite{R} or \cite[Chapter IV, Section 5, n$^\circ$~19]{L}) by means of
\begin{equation}\label{kelv-m}d\nu^*(x^*)=|x-y|^{\alpha-n}\,d\nu(x),\text{ \ where\ }x^*:=J_y(x)\in\mathbb R^n.\end{equation}
Then $\mathcal K_y$ is an involution, i.e.
$\mathcal K_y(\mathcal K_y\nu)=\nu$, which implies in view of (\ref{kelv-m})
\begin{equation}\label{kelv-mmm}\nu(\mathbb R^n)=U^{\nu^*}(y).\end{equation}
Next, combining (\ref{kelv-m}) and (\ref{inv}) yields
\begin{equation}\label{KP}U^{\nu^*}(x^*)=|x-y|^{n-\alpha}U^\nu(x)\text{ \ for all\ }x^*\in\mathbb R^n,\end{equation}
and therefore
\begin{equation}\label{K}E(\mu^*,\nu^*)=E(\mu,\nu)\end{equation}
for every $\mu\in\mathfrak M^+$ with $\mu(\{y\})=0$. Equality (\ref{K}) is obtained by multiplying (\ref{kelv-m}) (with $\mu$ in place of $\nu$) by (\ref{KP}), and then integrating with respect to $d\mu(x)$ over $\mathbb R^n$.

\begin{lemma}\label{l2} Assume\/ $\gamma_{A^*}$ exists. For its Kelvin transform\/ $(\gamma_{A^*})^*=\mathcal K_y\gamma_{A^*}$, we have
\begin{equation}\label{baleq}(\gamma_{A^*})^*=\varepsilon_y^A,\end{equation}
and hence\/ $\varepsilon_y^A$ is absolutely continuous along with\/ $\gamma_{A^*}$.\end{lemma}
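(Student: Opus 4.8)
The plan is to identify $(\gamma_{A^*})^*$ with $\varepsilon_y^A$ at the level of potentials, the Kelvin--transform identity (\ref{KP}) serving as the bridge. Applying (\ref{KP}) to $\sigma:=\gamma_{A^*}$ and substituting $x=J_y(p)$, so that $|x-y|=1/|p-y|$ by the defining relation of the inversion, I would first record that for every $p\neq y$
\[U^{(\gamma_{A^*})^*}(p)=|p-y|^{\alpha-n}\,U^{\gamma_{A^*}}\bigl(J_y(p)\bigr)=U^{\varepsilon_y}(p)\,U^{\gamma_{A^*}}\bigl(J_y(p)\bigr).\]
Because $U^{\gamma_{A^*}}\leqslant1$ on $\mathbb R^n$ by (\ref{eq-ex1}), this gives at once $U^{(\gamma_{A^*})^*}\leqslant U^{\varepsilon_y}$ everywhere; and because $U^{\gamma_{A^*}}=1$ n.e.\ on $A^*$ by (\ref{eq-ex0}), while $J_y$ maps $A\setminus\{y\}$ onto $A^*$ and preserves inner capacity zero (shell by shell via the two--sided estimate (\ref{est}) and the subadditivity of Lemma~\ref{subad}), it gives $U^{(\gamma_{A^*})^*}=U^{\varepsilon_y}$ n.e.\ on $A$. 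Thus $(\gamma_{A^*})^*$ already enjoys the two balayage relations (\ref{eq-bala-f1}), (\ref{eq-bala-f2}) of $\varepsilon_y^A$; the real work is to promote this to the equality (\ref{baleq}).

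To do so I would first settle the finite--energy compact case. Fix a compact $K\subset A$ with $y\notin K$ and set $L:=J_y(K)$, a compact subset of $A^*$; then $\gamma_L$ exists, is carried by $L$, and has finite energy (Lemma~\ref{l-eq-finite}). Its Kelvin transform $(\gamma_L)^*$ is carried by $J_y(L)=K$ and, by (\ref{K}), satisfies $E((\gamma_L)^*)=E(\gamma_L)<\infty$, so $(\gamma_L)^*\in\mathcal E^+_K$. Since $y\notin\overline K$, Remark~\ref{en-fin} gives $\varepsilon_y^K\in\mathcal E^+$, and $\varepsilon_y^K$ is carried by $K$ as well. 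Repeating the product formula above with $A,A^*$ replaced by $K,L$ shows $U^{(\gamma_L)^*}=U^{\varepsilon_y}=U^{\varepsilon_y^K}$ n.e.\ on $K$; as two elements of $\mathcal E^+_K$ with potentials agreeing n.e.\ on $K$ must coincide (the uniqueness mechanism in the proof of Theorem~\ref{th-bala-f}), I obtain $(\gamma_L)^*=\varepsilon_y^K$.

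Finally I would let $K\uparrow A\setminus\{y\}$, equivalently $L\uparrow A^*$. On one side $U^{\gamma_L}\uparrow U^{\gamma_{A^*}}$ by (\ref{eq-ex3}), whence the product formula yields $U^{(\gamma_L)^*}\uparrow U^{(\gamma_{A^*})^*}$ pointwise; on the other side $U^{\varepsilon_y^K}\uparrow U^{\varepsilon_y^{A\setminus\{y\}}}$ by Theorem~\ref{incr}, and $\varepsilon_y^{A\setminus\{y\}}=\varepsilon_y^A$ because $\{y\}$ is of zero inner capacity (Corollary~\ref{cor-red}). Since $U^{(\gamma_L)^*}=U^{\varepsilon_y^K}$ for every such $K$, the two monotone limits coincide, giving $U^{(\gamma_{A^*})^*}=U^{\varepsilon_y^A}$ on $\mathbb R^n$ and hence (\ref{baleq}). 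The absolute continuity of $\varepsilon_y^A$ then drops out of that of $\gamma_{A^*}$ (Lemma~\ref{l-eq-ar}): by (\ref{kelv-m}) a compact $P$ with $c_\alpha(P)=0$ contributes $(\gamma_{A^*})^*(P)=\int_{J_y(P)}|x-y|^{\alpha-n}\,d\gamma_{A^*}$, which vanishes since $J_y(P)$ again carries zero capacity.

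I expect the main obstacle to be precisely the transfer of the ``nearly everywhere'' qualifier across the inversion, i.e.\ the assertion that $J_y$ carries inner-capacity-zero subsets of $A\setminus\{y\}$ to inner-capacity-zero subsets of $A^*$ and conversely. This is what makes the shellwise comparison (\ref{est}) together with Lemma~\ref{subad} indispensable, and it is also the reason for performing the identification first on compacta bounded away from $y$, where the behaviour of the inversion and of the weight $|x-y|^{\alpha-n}$ at the pole $y$ and at infinity causes no trouble.
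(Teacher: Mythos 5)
Your proof is correct and follows essentially the same route as the paper's: first identify $(\gamma_{K^*})^*$ with $\varepsilon_y^K$ on compacta $K\subset A$ bounded away from $y$, then pass to the monotone limit $K\uparrow A$ using (\ref{eq-ex3})/(\ref{KP}) on one side and Theorem~\ref{incr} on the other. The only (immaterial) difference is that for the compact case you invoke the uniqueness within $\mathcal E^+_K$ of a measure with prescribed potential n.e.\ on $K$ (the uniqueness clause of Theorem~\ref{th-bala-f}), whereas the paper appeals to the extremal characterization of Theorem~\ref{l-char} together with the domination principle; both are valid.
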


\begin{proof}Assume that $A\not\ni y$, which certainly involves no loss of generality (cf.\ Corollary~\ref{cor-red}). Then $J_y$ is an or\-der-pre\-serv\-ing one-to-one mapping of  $\mathfrak C_A$ onto $\mathfrak C_{A^*}$. In view of the absolute continuity of inner equilibrium measure (cf.\ Lemma~\ref{l-eq-ar}), one can consider the Kelvin transforms $(\gamma_{A^*})^*=\mathcal K_y\gamma_{A^*}$ and $(\gamma_{K^*})^*=\mathcal K_y\gamma_{K^*}$ for every $K^*:=J_y(K)\in\mathfrak C_{A^*}$. It follows from (\ref{eq-ex3}), applied to $A^*$, and (\ref{KP}), applied to each of $\gamma_{K^*}$ and $\gamma_{A^*}$, that
\begin{equation}\label{pr2}U^{(\gamma_{K^*})^*}\uparrow U^{(\gamma_{A^*})^*}\text{ \ pointwise on $\mathbb R^n$ \ (as\ }K^*\uparrow A^*).\end{equation}
Also observe that $(\gamma_{K^*})^*\in\mathcal E_K^+$, which is seen from (\ref{K}) with $\mu=\nu=\gamma_{K^*}$.

We begin by establishing
\begin{equation}\label{baleqK}(\gamma_{K^*})^*=\varepsilon_y^K\text{ \  for every\ }K\in\mathfrak C_A.\end{equation}
Combining (\ref{KP}), applied to $\gamma_{K^*}$, with (\ref{eq-ex0}), applied to $K^*$, gives\footnote{Here we have used the fact that for any $E\subset\mathbb R^n$, $c_\alpha(E)=0$ if and only if $c_\alpha(E^*)=0$, where $E^*$ is the $J_y$-im\-age of $E\setminus\{y\}$ \cite[p.~261]{L}. This also implies that $\nu^*$ is absolutely continuous whenever $\nu$ is so.\label{foot1}}
\[U^{(\gamma_{K^*})^*}(x)=|x^*-y|^{n-\alpha}U^{\gamma_{K^*}}(x^*)=|x^*-y|^{n-\alpha}=|x-y|^{\alpha-n}=U^{\varepsilon_y}(x)\text{ \ n.e.\ on\ }K,\]
which is (\ref{eq-char}) with $\xi=(\gamma_{K^*})^*$ and $\mu=\varepsilon_y$. According to Theorem~\ref{l-char}, (\ref{baleqK}) will follow once we verify that $U^\xi\geqslant U^{(\gamma_{K^*})^*}$ on $\mathbb R^n$ for any $\xi\in\mathfrak M^+$ with
\[U^\xi\geqslant U^{\varepsilon_y}\text{ \ n.e.\ on\ }K.\]
As seen from the last two displays, the inequality in question holds, indeed, n.e.\ on $K$, hence $(\gamma_{K^*})^*$-a.e.\ because $(\gamma_{K^*})^*\in\mathcal E_K^+$, and therefore, by the domination principle, on all of $\mathbb R^n$, as required.

But, according to Theorem~\ref{incr} with $\mu=\varepsilon_y$,
\[U^{\varepsilon_y^K}\uparrow U^{\varepsilon_y^A}\text{ \ pointwise on $\mathbb R^n$ \ (as\ } K\uparrow A).\]
Having now substituted (\ref{baleqK}) into this display, and then compared the relation thus obtained with (\ref{pr2}), we get (\ref{baleq}).\end{proof}

\begin{lemma}\label{l3}If\/ $\gamma_{A^*}$ does not exist, then\/
\[\varepsilon_y^A=\varepsilon_y,\] and hence\/ $y$ is inner\/ $\alpha$-reg\-ular for\/ $A$.\end{lemma}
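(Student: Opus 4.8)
The plan is to establish several identities that hold \emph{unconditionally} (i.e.\ without assuming $\gamma_{A^*}$ exists) and then to close the argument by a dichotomy on the mass $\varepsilon_y^A$ places at $y$. First I would combine the identity (\ref{baleqK}), namely $(\gamma_{K^*})^*=\varepsilon_y^K$ for every $K\in\mathfrak C_A$, with Theorem~\ref{incr} applied to $\mu=\varepsilon_y$, which gives $U^{\varepsilon_y^K}\uparrow U^{\varepsilon_y^A}$ pointwise. Inserting the Kelvin relation (\ref{KP}) (with $\nu=\gamma_{K^*}$, so $\nu^*=\varepsilon_y^K$) yields, for $x\ne y$ and $x^*:=J_y(x)$,
\[U^{\varepsilon_y^K}(x^*)=|x-y|^{n-\alpha}\,U^{\gamma_{K^*}}(x).\]
The net $\bigl(U^{\gamma_{K^*}}\bigr)_{K^*\uparrow A^*}$ is increasing and bounded above by $1$: for $K\subset K'$ the potentials $U^{\gamma_{K^*}}$ and $U^{\gamma_{K'^*}}$ agree (to $1$) n.e.\ on $K^*$ by Lemma~\ref{l-eq-finite}(c), hence $\gamma_{K^*}$-a.e., so $U^{\gamma_{K^*}}\leqslant U^{\gamma_{K'^*}}$ everywhere by the domination principle (Theorem~\ref{dom}), while $U^{\gamma_{K^*}}\leqslant1$ by Lemma~\ref{l-eq-finite}(d). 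Letting $K\uparrow A$ therefore produces the basic identity
\[U^{\varepsilon_y^A}(x^*)=|x-y|^{n-\alpha}\,g(x),\qquad g(x):=\lim_{K\uparrow A}U^{\gamma_{K^*}}(x)\leqslant1.\]

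Next I would recognize $g$ as a genuine Riesz potential. Setting $c:=\varepsilon_y^A(\{y\})$ and $\beta_0:=\varepsilon_y^A|_{\mathbb R^n\setminus\{y\}}$, so that $\varepsilon_y^A=c\,\varepsilon_y+\beta_0$ with $\beta_0(\{y\})=0$, I substitute this decomposition into the basic identity, use $|x^*-y|=|x-y|^{-1}$, and apply (\ref{KP}) to $\beta_0$ to obtain
\[g=c+U^{\beta_0^*}\quad\text{on }\mathbb R^n\setminus\{y\},\qquad \beta_0^*:=\mathcal K_y\beta_0\in\mathfrak M^+.\]
Since $g\leqslant1$, this already gives $U^{\beta_0^*}\leqslant1$, so $\beta_0^*$ is a legitimate measure (its potential is not identically infinite); moreover $c\in[0,1]$ by (\ref{t-mass}).

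The qualitative input is that $g\geqslant1$ n.e.\ on $A^*$. Indeed, for each $K\in\mathfrak C_A$ one has $U^{\gamma_{K^*}}=1$ n.e.\ on $K^*$ (Lemma~\ref{l-eq-finite}(c)), hence $g\geqslant1$ n.e.\ on $K^*$; since $y\notin A^*$, every compact $L\subset A^*$ is of the form $K^*$ with $K=J_y(L)\in\mathfrak C_A$, and inner capacity is the supremum of capacities of compact subsets, so $c_\alpha\bigl(\{x\in A^*:\ g(x)<1\}\bigr)=0$. Combined with $g=c+U^{\beta_0^*}$ this yields $U^{\beta_0^*}\geqslant1-c$ n.e.\ on $A^*$. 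Now the dichotomy closes the proof under the hypothesis that $\gamma_{A^*}$ does not exist: if $c<1$, then $(1-c)^{-1}\beta_0^*\in\Theta_{A^*}$, whence $\gamma_{A^*}$ exists by Lemma~\ref{l-eq-ar}, a contradiction; therefore $c=1$. But then $\varepsilon_y^A(\mathbb R^n)\geqslant c=1$ together with (\ref{t-mass}) forces $\beta_0=0$, i.e.\ $\varepsilon_y^A=\varepsilon_y$, and $y$ is inner $\alpha$-regular for $A$.

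The main obstacle I anticipate is the middle step: showing that the increasing pointwise limit $g$ of the equilibrium potentials $U^{\gamma_{K^*}}$ is itself the potential of a measure in $\mathfrak M^+$, rather than merely a bounded lower semicontinuous function. The Kelvin transform resolves this, but one must first account for the possible atom of $\varepsilon_y^A$ at $y$ — precisely what the constant $c$ records — before $\mathcal K_y$ can be applied; splitting off $c\,\varepsilon_y$ is what makes $\beta_0^*$ well defined and turns $g$ into $c+U^{\beta_0^*}$. The secondary delicate point, upgrading $g\geqslant1$ from compact subsets $K^*$ to all of $A^*$, is handled directly by the definition of inner capacity as a supremum over compact subsets, so no countable exhaustion of $A^*$ is needed.
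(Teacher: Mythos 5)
Your argument is correct and ends in the same contradiction as the paper, but by a longer road. The paper's proof is a two-step dichotomy on $c:=\varepsilon_y^A(\{y\})$: if $c=0$ it Kelvin-transforms $\varepsilon_y^A$ itself, and if $0<c<1$ it Kelvin-transforms $\chi/(1-c)$, where $\chi:=\varepsilon_y^A-c\varepsilon_y$; in both cases membership in $\Theta_{A^*}$ is read off directly from $U^{\varepsilon_y^A}=U^{\varepsilon_y}$ n.e.\ on $A$ (i.e.\ (\ref{eq-bala-f1})) via (\ref{KP}) and the capacity invariance of $J_y$ (footnote~\ref{foot1}). Your single dichotomy on $c$ subsumes both of these cases, a small gain in economy at the end; but your derivation of the key inequality $U^{\beta_0^*}\geqslant1-c$ n.e.\ on $A^*$ detours through the limit function $g=\lim_{K\uparrow A}U^{\gamma_{K^*}}$, the identity (\ref{baleqK}), and Theorem~\ref{incr}, none of which is needed: subtracting $c|x-y|^{\alpha-n}$ from both sides of (\ref{eq-bala-f1}) gives $U^{\beta_0}=(1-c)|x-y|^{\alpha-n}$ n.e.\ on $A$, and (\ref{KP}) applied to $\beta_0/(1-c)$ yields the same conclusion in one line. (Your extraction of (\ref{baleqK}) from the proof of Lemma~\ref{l2} is legitimate, since that part of the proof uses only the existence of $\gamma_{K^*}$ for $K^*$ compact and not the hypothesis on $\gamma_{A^*}$ --- but it is precisely the machinery you could have avoided.)

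One point you must add explicitly: before indexing anything by $\mathfrak C_A$ you need $y\notin A$. For a compact $K\ni y$ the set $K^*=J_y(K\setminus\{y\})$ is closed but unbounded, $\gamma_{K^*}$ need not exist, and the compacts avoiding $y$ are not cofinal in $\mathfrak C_A$ when $y\in A$; so the net $(U^{\gamma_{K^*}})_{K\in\mathfrak C_A}$ defining $g$ is not well posed without this normalization. The fix is the one-line reduction the paper performs at the start of the proof of Lemma~\ref{l2}: replace $A$ by $A\setminus\{y\}$ via Corollary~\ref{cor-red}, a single point having zero capacity. With that inserted, the remaining steps --- the monotonicity of $(U^{\gamma_{K^*}})$ by the domination principle, the Radon-ness of $\beta_0^*$ and the bound $U^{\beta_0^*}\leqslant1$, the passage from compact $L\subset A^*$ to $c_\alpha(\{x\in A^*:\ g(x)<1\})=0$ via (\ref{cap-incr}), and the final mass count using (\ref{t-mass}) to force $\beta_0=0$ --- are all sound.
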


\begin{proof}  Assuming $\gamma_{A^*}$ does not exist, we begin by observing that then $\varepsilon_y^A(\{y\})>0$. Indeed, if not, then the Kelvin transform $(\varepsilon_y^A)^*=\mathcal K_y\varepsilon_y^A$ exists and has the potential equal to $1$ n.e.\ on $A^*$, the latter being seen by applying (\ref{eq-bala-f1}) and (\ref{KP}) to $\varepsilon_y$ and $\varepsilon_y^A$, respectively. Hence, $(\varepsilon_y^A)^*\in\Theta_{A^*}$, which by Lemma~\ref{l-eq-ar} (applied to $A^*$) contradicts our assumption.

We proceed by showing that the relation $\varepsilon_y^A(\{y\})>0$ thus obtained implies $\varepsilon_y^A=\varepsilon_y$.
Indeed, if not, then
\[\varepsilon_y^A=c\varepsilon_y+\chi,\]
where $0<c<1$ and $\chi\in\mathfrak M^+$ is a nonzero measure with $\chi(\{y\})=0$, the inequality $c<1$ being clear from (\ref{t-mass}) applied to $\varepsilon_y$. But then
\[|x-y|^{\alpha-n}=U^{\varepsilon_y}(x)=U^{\varepsilon_y^A}(x)=c|x-y|^{\alpha-n}+U^\chi(x)\text{ \ n.e.\ on\ }A,\]
and consequently
\[U^{\chi_1}(x)=|x-y|^{\alpha-n}\text{ \ n.e.\ on $A$},\text{ \ where\ }\chi_1:=\chi/(1-c).\]
Since $\chi_1(\{y\})=0$, (\ref{KP}) applied to $\chi_1$ yields $U^{(\chi_1)^*}=1$ n.e.\ on $A^*$, and hence $(\chi_1)^*\in\Theta_{A^*}$, which in view of Lemma~\ref{l-eq-ar} again contradicts our hypothesis that $\gamma_{A^*}$ does not exist.\end{proof}

\begin{theorem}\label{nands}A point\/ $y\in\overline{A}$ is inner\/ $\alpha$-irregular for\/ $A$ if and only if\/ $\gamma_{A^*}$ exists, where\/ $A^*$ is the\/ $J_y$-im\-age of\/ $A\setminus\{y\}$. Moreover, then\/ {\rm(\ref{baleq})} holds.\end{theorem}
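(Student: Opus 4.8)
The plan is to read off the theorem as a direct synthesis of Lemmas~\ref{l2} and \ref{l3}, since all the substantive work has already been done there. Recall from Definition~\ref{def-reg} that, as $y\in\overline{A}$ is fixed, the point $y$ is inner $\alpha$-irregular for $A$ exactly when $\varepsilon_y^A\ne\varepsilon_y$. Thus the two implications of the theorem reduce to the equivalence of $\varepsilon_y^A\ne\varepsilon_y$ with the existence of $\gamma_{A^*}$.

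First I would treat the implication that the existence of $\gamma_{A^*}$ makes $y$ inner $\alpha$-irregular. Lemma~\ref{l2} supplies at once both the identity $(\gamma_{A^*})^*=\varepsilon_y^A$ (which is the ``moreover'' assertion (\ref{baleq})) and the absolute continuity of $\varepsilon_y^A$. It then remains only to note that $\varepsilon_y^A\ne\varepsilon_y$, and here the single new observation enters: a Dirac measure is \emph{not} absolutely continuous, since $c_\alpha(\{y\})=0$ whereas $\varepsilon_y(\{y\})=1>0$. As $\varepsilon_y^A$ \emph{is} absolutely continuous by Lemma~\ref{l2}, the two measures cannot agree, so $y$ is inner $\alpha$-irregular for $A$.

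For the converse I would argue by contraposition, invoking Lemma~\ref{l3}: if $\gamma_{A^*}$ does not exist, then $\varepsilon_y^A=\varepsilon_y$, so $y$ is inner $\alpha$-regular for $A$. Equivalently, if $y$ is inner $\alpha$-irregular, i.e.\ $\varepsilon_y^A\ne\varepsilon_y$, then $\gamma_{A^*}$ must exist. No genuine obstacle remains at the level of the theorem itself, the difficulty having been absorbed into Lemmas~\ref{l1}--\ref{l3}; the only step demanding a moment's care is the verification that $\varepsilon_y^A\ne\varepsilon_y$ in the ``if'' direction, which turns entirely on contrasting the absolute continuity of the Kelvin transform $(\gamma_{A^*})^*$ (inherited from that of $\gamma_{A^*}$, cf.\ Lemma~\ref{l-eq-ar}) with the failure of absolute continuity for a point mass.
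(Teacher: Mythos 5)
Your proposal is correct and follows exactly the paper's route: the paper's proof of Theorem~\ref{nands} is literally ``combine Lemmas~\ref{l2} and \ref{l3}'', with Lemma~\ref{l3} (in contrapositive form) giving the ``only if'' direction and Lemma~\ref{l2} giving (\ref{baleq}) together with the absolute continuity of $\varepsilon_y^A$, whence $\varepsilon_y^A\ne\varepsilon_y$ since $c_\alpha(\{y\})=0$ while $\varepsilon_y(\{y\})=1$. The small detail you supply about the point mass failing absolute continuity is the intended (and correct) way to close the ``if'' direction.
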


\begin{proof}This follows by combining Lemmas~\ref{l2} and \ref{l3}.\end{proof}

\subsection{Proof of Theorem~\ref{irreg-com}}\label{proof} We shall first establish the following lemma.

\begin{lemma}\label{reg-com} If\/ $\gamma_A$ exists, then
\[U^{\gamma_A}=1\text{ \ on\ }\overline{A}\setminus A_I.\]
\end{lemma}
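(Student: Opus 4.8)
The plan is to prove that $U^{\gamma_A}=1$ at every point $y\in\overline{A}\setminus A_I$, i.e.\ at every \emph{inner $\alpha$-regular} point of $\overline A$. Since by Lemma~\ref{l-eq-ar} (relation~(\ref{eq-ex0})) we already know $U^{\gamma_A}=1$ n.e.\ on $A$, and by (\ref{eq-ex1}) that $U^{\gamma_A}\leqslant1$ everywhere, the content of the lemma is to upgrade ``n.e.\ on $A$'' to ``everywhere on $\overline A\setminus A_I$'', which is exactly the kind of statement inner $\alpha$-regularity is designed to control. The natural tool is Lemma~\ref{l-regg}: a point $y$ is inner $\alpha$-regular for $A$ precisely when $U^{\mu^A}(y)=U^\mu(y)$ for every $\mu\in\mathfrak M^+$. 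So the strategy is to realize the equilibrium potential $U^{\gamma_A}$ as a value $U^{\mu^A}(y)$ for a suitable $\mu$, and then read off equality at regular points from the definition of regularity.

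The key step is to identify the right measure $\mu$. Since $\gamma_A$ plays the role of the balayage of the constant function $1$ onto $A$ (as noted in the footnote to Definition~\ref{def-eq}), I expect $\gamma_A$ itself to behave like $\mu^A$ for an appropriate $\mu$. More concretely, I would argue as follows. Fix $y\in\overline A\setminus A_I$, so $y$ is inner $\alpha$-regular for $A$. By Lemma~\ref{l-regg} we have $U^{\mu^A}(y)=U^\mu(y)$ for \emph{every} $\mu\in\mathfrak M^+$. Now I want to choose $\mu$ so that $\mu^A=\gamma_A$ and $U^\mu(y)=1$. A candidate is to take a measure whose potential equals $1$ in a neighbourhood of the relevant part of $A$: for instance, if $c_\alpha(A)<\infty$ one may use the equilibrium measure of a large ball, whose potential is identically $1$ on that ball; alternatively, exhaust $A$ by the sets $A_k=A\cap R_k$ and use the equilibrium measures $\gamma_{A_k}$ from Lemma~\ref{l-eq-finite}, which satisfy $U^{\gamma_{A_k}}=1$ n.e.\ on $A_k$.

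More directly, I would exploit the limit relations (\ref{eq-ex2})--(\ref{eq-ex3}): $U^{\gamma_K}\uparrow U^{\gamma_A}$ pointwise on $\mathbb R^n$ as $K\uparrow A$. For each compact $K\in\mathfrak C_A$ with $c_\alpha(K)>0$, the equilibrium measure $\gamma_K$ satisfies $U^{\gamma_K}=1$ n.e.\ on $K$ and $U^{\gamma_K}\leqslant 1$ everywhere (Lemma~\ref{l-eq-finite}(c),(d)). I claim that $\gamma_K$ can be written as $\mu^K$ for a fixed $\mu$ with $U^\mu\equiv1$ on a neighbourhood of $y$; then by regularity $U^{\gamma_K}(y)=U^{\mu^K}(y)=U^\mu(y)=1$ for all such $K$, and passing to the limit via (\ref{eq-ex3}) gives $U^{\gamma_A}(y)=1$. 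To make $\gamma_K=\mu^K$ rigorous, I would take $\mu$ to be the equilibrium measure $\gamma_{\overline B}$ of a closed ball $\overline B\supset K$ containing $y$ in its interior; then $U^\mu=1$ on $\overline B$, and by the characteristic property (Theorem~\ref{l-char}) together with the domination principle one checks $\mu^K=\gamma_K$, since both have potential $1$ n.e.\ on $K$, are of finite energy, and are minimal in the sense of Theorem~\ref{l-char}. Combined with $U^\mu(y)=1$ and inner $\alpha$-regularity of $y$, this yields $U^{\gamma_K}(y)=1$ for every $K\in\mathfrak C_A$, and (\ref{eq-ex3}) then forces $U^{\gamma_A}(y)=1$.

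The main obstacle I anticipate is the identification $\mu^K=\gamma_K$ for the chosen $\mu=\gamma_{\overline B}$: one must verify both that $U^{\gamma_{\overline B}}\equiv 1$ near $y$ (clear, by choosing $\overline B$ a closed ball with $y$ interior and $K\subset\overline B$) and that balayage of this $\mu$ onto $K$ reproduces the equilibrium measure, which relies on the uniqueness built into Theorem~\ref{th-bala-f} (for $\mu\in\mathcal E^+$, $K$ closed) together with the defining property $U^{\gamma_K}=1=U^\mu$ n.e.\ on $K$. Once that identification is in place, the rest is a routine application of Lemma~\ref{l-regg} and the monotone limit (\ref{eq-ex3}); the inequality $U^{\gamma_A}\leqslant1$ from (\ref{eq-ex1}) guarantees the limit cannot overshoot, so equality $U^{\gamma_A}(y)=1$ is exactly what the regularity of $y$ delivers.
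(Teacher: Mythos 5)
Your overall strategy---realize $\gamma_K$ as the balayage $\mu^K$ of the equilibrium measure $\mu=\gamma_{\overline{B}}$ of a ball and then convert inner $\alpha$-regularity of $y$ into $U^{\gamma_A}(y)=1$ via Lemma~\ref{l-regg}---is genuinely different from the paper's proof, which fixes an auxiliary point, passes to the inverse set $A^*$, shows via the capacity estimate (\ref{est}) and the Wiener criterion that the inverted point is regular for $A^*$, and pulls the resulting identity back through the Kelvin transform using $\gamma_A=\mathcal K_y\varepsilon_y^{A^*}$ (Lemma~\ref{l2}). Your identification $\mu^K=\gamma_K$ is correct: both lie in $\mathcal E^+_K$ and have potential equal to $1$ n.e.\ on $K$, so the uniqueness in Theorem~\ref{th-bala-f} forces equality. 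But the step ``by regularity $U^{\mu^K}(y)=U^\mu(y)$'' is wrong: by Lemma~\ref{l-regg}, inner $\alpha$-regularity of $y$ for $A$ is a statement about the balayage onto $A$, not onto its compact subsets. For an individual $K\in\mathfrak C_A$ (whose closure need not even contain $y$) one generally has $U^{\mu^K}(y)<U^\mu(y)$; regularity controls only the \emph{limit} $U^{\mu^A}(y)=\lim_{K\uparrow A}U^{\mu^K}(y)$. The correct order of operations is to pass to the limit in $U^{\gamma_K}(y)=U^{\mu^K}(y)$ using (\ref{eq-ex3}) and Corollary~\ref{cor-eq-arb}, obtaining $U^{\gamma_A}(y)=U^{\mu^A}(y)$, and only then invoke Lemma~\ref{l-regg} once, for $\mu^A$.

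That repair exposes the second gap: it requires a single \emph{fixed} $\mu$ with $\mu^K=\gamma_K$ for all $K\in\mathfrak C_A$, i.e.\ with $U^\mu=1$ n.e.\ on all of $A$ and $U^\mu(y)=1$, whereas your $\mu=\gamma_{\overline{B}}$ with $\overline{B}\supset K$ varies with $K$. If $A$ is bounded, one ball $\overline{B}\supset\overline{A}$ does the job and the argument closes: $U^\mu\equiv1$ on $\overline{B}$, hence $\mu^A=\gamma_A$ and $U^{\gamma_A}(y)=U^\mu(y)=1$. But the lemma is stated for arbitrary $A$ and is applied (e.g.\ in Theorem~\ref{desc-eq}) to unbounded closed sets, and for unbounded $A$ there is no evident choice of a single $\mu$ whose potential equals $1$ n.e.\ on $A$ and is known \emph{a priori} to equal $1$ at $y$ (taking $\mu=\gamma_A$ itself renders the argument circular). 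You would need an extra localization: by Theorem~\ref{Wiener} regularity is a local property of $A$ near $y$, so $y$ is regular for $A':=A\cap B(y,1)$; the bounded case gives $U^{\gamma_{A'}}(y)=1$, and since $\gamma_A\in\Theta_{A'}$ one has $1=U^{\gamma_{A'}}(y)\leqslant U^{\gamma_A}(y)\leqslant1$ by (\ref{eq-ex1}). With these two corrections your route works and is arguably more elementary than the paper's inversion argument, but as written the proof does not go through.
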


\begin{proof}Fix $x\in\overline{A}\setminus A_I$ and $y\in\mathbb R^n$, $y\ne x$, and write $r:=|x-y|$. For $E\subset\mathbb R^n$, let $E^*$ denote the $J_y$-im\-age of $E\setminus\{y\}$. Then
\[M^{-1}c_\alpha(E)\leqslant c_\alpha(E^*)\leqslant Mc_\alpha(E)\text{ \ for every\ }E\subset B(x,r/2),\]
$M\in(1,\infty)$ being independent of $E$ (cf.\ (\ref{est})). By the Wiener type criterion, this implies that $x^*:=J_y(x)$ is inner $\alpha$-reg\-ular for $A^*$. Hence, by (\ref{eq-reg1}) applied to $\varepsilon_y$,
\begin{equation}\label{reggg}U^{\varepsilon_y^{A^*}}(x^*)=U^{\varepsilon_y}(x^*)=|x^*-y|^{\alpha-n}.\end{equation}

Assume that $\gamma_A$ exists. According to Lemma~\ref{l2} with $A$ and $A^*$ reversed,
\[\gamma_A=\mathcal K_y\varepsilon_y^{A^*},\]
and $U^{\gamma_A}(x)=1$ is obtained by combining (\ref{reggg}) with (\ref{KP}), applied to $\nu=\varepsilon_y^{A^*}$.\end{proof}

Write $A^{(k)}:=A\cap B(0,k)$. Theorem~\ref{irreg-com} will be established once we show
\begin{equation}\label{kel}c_\alpha\bigl(A_I\cap A^{(k)}\bigr)=0\text{ \ for every\ }k\in\mathbb N.\end{equation}
By Lemma~\ref{reg-com} applied to $A^{(k)}$,
$U^{\gamma_{A^{(k)}}}=1$ on $\overline{A^{(k)}}\setminus A^{(k)}_I$.
Comparing this with (\ref{eq-ex0}), applied to $A^{(k)}$, implies that $A^{(k)}_I\cap A^{(k)}$ has inner capacity zero. But it is clear from the Wiener type criterion that
$A_I\cap A^{(k)}=A^{(k)}_I\cap A^{(k)}$, and (\ref{kel}) follows.

\section{Description of $U^{\gamma_A}$ and $S(\gamma_A)$ for $A$ closed}

Define the \emph{reduced kernel\/} $\breve{A}$ \cite[p.~164]{L} of $A$ as the set of all $x\in A$ such that
\[c_\alpha(B(x,r)\cap A\bigr)>0\text{ \ for any\ }r>0.\]

Assuming that $A$ is closed and $\gamma_A$ exists, we provide in Theorem~\ref{desc-eq} below a detailed description of $U^{\gamma_A}$ and $S(\gamma_A)$. While doing this, we can assume without any loss of generality that $A=\breve{A}$. Indeed, $\breve{A}$ is closed along with $A$. Furthermore, since $c_\alpha(A\setminus\breve{A})=0$, $\gamma_A$ serves simultaneously as $\gamma_{\breve{A}}$, and also $A_I=(\breve{A})_I$.

\begin{lemma}\label{l-comp}Under these hypotheses, assume moreover that\/ $\alpha=2$. Then there is a unique connected component\/ $\Delta$ of the\/ {\rm(}open{\rm)} set\/ $A^c$ such that\/ $\gamma_{\tilde{A}}$ exists, where
\[\tilde{A}:=\Delta^c\quad({}\supset A).\]\end{lemma}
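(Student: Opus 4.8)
The plan is to read off the structure of $U^{\gamma_A}$ on the open set $A^c$ from the fact that $\alpha=2$, to single out the region where $U^{\gamma_A}<1$, and to prove that this region is one component; uniqueness of $\Delta$ will then come from a global obstruction. Since $\alpha=2$, $U^{\gamma_A}$ is superharmonic on $\mathbb R^n$ and harmonic off $S(\gamma_A)\subset\overline A=A$, hence harmonic on the open set $A^c$; by Lemma~\ref{l-eq-ar} we have $0\le U^{\gamma_A}\le1$ with $U^{\gamma_A}=1$ n.e.\ on $A$. On each connected component $V$ of $A^c$ the strong maximum principle forces $U^{\gamma_A}\equiv1$ on $V$ or $U^{\gamma_A}<1$ throughout $V$. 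Put $\Omega:=\{x\in A^c:\ U^{\gamma_A}(x)<1\}$, an open set which is the union of the components of the latter kind. Here $\Omega\ne\varnothing$, for otherwise $U^{\gamma_A}=1$ n.e., hence everywhere by the super-mean-value inequality, contradicting the standard fact $\liminf_{|x|\to\infty}U^{\gamma_A}(x)=0$ valid for any Newtonian potential. Moreover every component of $\Omega$ is unbounded: on a bounded component $G$, the bounded harmonic function $1-U^{\gamma_A}$ has $\limsup_{x\to\zeta}(1-U^{\gamma_A})(x)\le0$ for q.e.\ $\zeta\in\partial G\subset A$ (by lower semicontinuity and $U^{\gamma_A}=1$ n.e.\ on $\partial G$), so $U^{\gamma_A}\equiv1$ on $G$, polar boundary sets being negligible for bounded harmonic functions.

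The lemma will follow once $\Omega$ is shown to be connected. Granting this, set $\Delta:=\Omega$ and $\tilde A:=\Delta^c=A\cup\bigcup\{V:\ U^{\gamma_A}\equiv1\text{ on }V\}$, a closed set containing $A$. Then $U^{\gamma_A}=1$ n.e.\ on $\tilde A$ (n.e.\ on $A$, and everywhere on each adjoined component), $U^{\gamma_A}\le1$ on $\mathbb R^n$, and $\gamma_A$ is absolutely continuous and carried by $A\subset\tilde A$. Applying Corollary~\ref{clos-eq} to $\tilde A$ with $\check\gamma=\gamma_A$, the measure $\gamma_{\tilde A}$ exists and equals $\gamma_A$, which gives the existence half of the claim.

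For uniqueness, suppose $\Delta_1\ne\Delta_2$ are distinct components of $A^c$ for which both $\gamma_{\tilde A_1}$ and $\gamma_{\tilde A_2}$ exist, where $\tilde A_i:=\Delta_i^c$. Distinct components are disjoint, so $\tilde A_1\cup\tilde A_2=(\Delta_1\cap\Delta_2)^c=\mathbb R^n$. Put $\nu:=\gamma_{\tilde A_1}+\gamma_{\tilde A_2}$. Since $U^{\gamma_{\tilde A_i}}=1$ n.e.\ on $\tilde A_i$ (Lemma~\ref{l-eq-ar}), we get $U^\nu\ge1$ n.e.\ on each $\tilde A_i$, hence $U^\nu\ge1$ n.e.\ on $\tilde A_1\cup\tilde A_2=\mathbb R^n$ by Lemma~\ref{subad}. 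As each $U^{\gamma_{\tilde A_i}}\le1$, condition (\ref{1.3.10}) holds for $\nu$, so $U^\nu$ is superharmonic; therefore $U^\nu\ge1$ n.e.\ propagates to $U^\nu\ge1$ everywhere on $\mathbb R^n$ (a superharmonic function bounded below by $1$ off a polar set is $\ge1$, since polar sets carry no spherical mass). This contradicts $\liminf_{|x|\to\infty}U^\nu(x)=0$. Hence at most one $\Delta$ qualifies.

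The remaining, and genuinely hard, point is the connectedness of $\Omega$, and this is exactly where $\alpha=2$ is indispensable. If $A$ is bounded it is immediate: $A^c$ has a single unbounded component, all bounded ones satisfy $U^{\gamma_A}\equiv1$ by the first paragraph, and the unbounded one lies in $\Omega$ because $U^{\gamma_A}\to0$ at infinity, so $\Omega$ is that single component. For $A$ unbounded the plan is to fix $p\in A$ and invert by $J_p$, carrying the point at infinity to $p$. With $A^*:=J_p(A\setminus\{p\})$ one has $p\in\overline{A^*}$, and from Theorem~\ref{nands} (using $(A^*)^*=A\setminus\{p\}$ and Corollary~\ref{cor-red}) the existence of $\gamma_A$ means precisely that $p$ is inner $\alpha$-irregular for $A^*$, while Lemma~\ref{l2} gives $(\gamma_A)^*=\varepsilon_p^{A^*}$. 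Combining this with (\ref{KP}) identifies $J_p(\Omega)$ with $\Omega^*:=\{z\ne p:\ U^{\varepsilon_p^{A^*}}(z)<U^{\varepsilon_p}(z)\}$, and the unbounded components of $\Omega$ correspond to the components of $\Omega^*$ whose closure contains $p$. The assertion thus reduces to the local statement that $\Omega^*$ meets each small punctured ball about $p$ in a single component, i.e.\ that a set which is thin at $p$ does not locally separate its complement in $\mathbb R^n$, $n\ge3$. Proving this local non-separation — by controlling the way $\Omega^*$ approaches $p$ through the fine behaviour of the swept measure $\varepsilon_p^{A^*}$ near $p$ — is the main obstacle of the proof.
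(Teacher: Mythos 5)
Your argument is incomplete at precisely the point you identify yourself: the connectedness of $\Omega:=\{x\in A^c:\ U^{\gamma_A}(x)<1\}$ for $A$ unbounded is not an incidental technicality but is the entire content of the lemma, and you neither prove it nor cite a result that supplies it. Everything else you do is sound and, in fact, somewhat more self-contained than the paper: the dichotomy on each component of $A^c$ (via harmonicity of $U^{\gamma_A}$ off $A$ and the strong maximum principle), the elimination of bounded components, the derivation of existence from connectedness via Corollary~\ref{clos-eq} with $\check\gamma=\gamma_A$, and the uniqueness argument (two distinct admissible components would force $U^{\gamma_{\tilde A_1}}+U^{\gamma_{\tilde A_2}}\geqslant1$ n.e.\ on $\mathbb R^n$, contradicting the nonexistence of an equilibrium measure for all of $\mathbb R^n$) are all correct and match what Theorem~\ref{desc-eq} later confirms, namely that $\Delta$ is exactly the region where $U^{\gamma_A}<1$. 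But a proof that stops at ``the main obstacle'' is not a proof.

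For comparison, the paper disposes of the hard point in one line by going in the opposite direction: instead of analyzing the level set of $U^{\gamma_A}$, it inverts at a point $y\in A^c$, so that $\mathrm{Cl}_{\overline{\mathbb R^n}}A$ becomes a compact set $K_y$ with $y$ a $2$-irregular point (Theorem~\ref{nands} with $A$ and $A^*$ reversed), and then invokes the classical result of Brelot \cite[Chapter~VIII, Section~6, Remark]{Brelo1}: for a $2$-irregular point $y$ of a compact set $K$, there is a \emph{unique} connected component $D$ of $K^c$ such that $y$ remains $2$-irregular for $D^c$. The $J_y$-image of this $D$ is $\Delta$. Your reduction of the problem to ``a set thin at $p$ does not locally separate its complement near $p$'' is essentially an equivalent reformulation of Brelot's statement, so the honest options are either to cite such a result explicitly or to prove it; as written, the existence half of the lemma is unproved for unbounded $A$. (Two smaller points worth tightening if you complete the argument: when you pass from ``$U^{\gamma_A}=1$ n.e.\ on $\partial G$'' to the boundary maximum principle on a bounded component $G$, note that the exceptional set is Borel, hence capacitable, so inner capacity zero does give a polar set; and in the uniqueness step, justify that a set of inner capacity zero is negligible for surface measure on spheres before applying the super-mean-value inequality.)
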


\begin{proof}For $A$ compact, $\Delta$ is, in fact, the (unique) unbounded connected component of $A^c$. For $A$ noncompact, fix any $y\in A^c$ and consider the $J_y$-im\-age $K_y$ of $\mathrm{Cl}_{\overline{\mathbb R^n}}A$. Since $\gamma_A$ exists, Theorem~\ref{nands} with $A$ and $A^*$ reversed shows that $y$ is a $2$-ir\-reg\-ular point of $K_y$. By \cite[Chapter~VIII, Section~6, Remark]{Brelo1}, there is therefore a unique connected component $D$ of the (open) set $K_y^c$ such that $y$ is $2$-ir\-reg\-ular for $D^c$, and the $J_y$-im\-age $\Delta$ of this $D$ is as claimed.\end{proof}

\begin{theorem}\label{desc-eq}In the above mentioned hypotheses and notation, {\rm(\ref{eq-ex0})} and\/ {\rm(\ref{eq-ex1})} can be specified as follows: if\/ $\alpha<2$, then
\begin{align}\label{det1} U^{\gamma_A}&=1\text{ \ on\ }A\setminus A_I,\\
\label{det2} U^{\gamma_A}&<1\text{ \ on\ }A^c,\end{align}
while for\/ $\alpha=2$,\footnote{Observe that $\tilde{A}_I\subset A_I$, which is seen from Corollary~\ref{reg-inner} and the inclusion $\partial\tilde{A}\subset\partial A$.}
\begin{align}\label{det3} U^{\gamma_A}&=1\text{ \ on\ }\tilde{A}\setminus\tilde{A}_I,\\
\label{det4} U^{\gamma_A}&<1\text{ \ on\ }\tilde{A}^c.\end{align}
Furthermore,
\begin{equation}\label{det5}
 S(\gamma_A)=\left\{
\begin{array}{lll} A&\text{if}&\alpha<2,\\
\partial\tilde{A}&\text{if}&\alpha=2.\\ \end{array} \right.
\end{equation}
\end{theorem}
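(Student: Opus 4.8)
The plan is to handle the two ranges of $\alpha$ separately, keeping the normalization $A=\breve A$ throughout, and to exploit one common principle: a potential that is $\alpha$-harmonic on an open set and attains there its global supremum $1$ must be constant. So I first record that $U^{\gamma_A}\not\equiv1$; otherwise $\gamma_A\in\Theta_{\mathbb R^n}$, which by Theorem~\ref{l-eq-ex} is impossible, since the series \eqref{iii} diverges for $A=\mathbb R^n$ (the annular capacities $c_\alpha(R_k)$ are comparable to $q^{k(n-\alpha)}$). I also note $S(\gamma_A)\subset\overline A=A$ by Lemma~\ref{l-eq-ar}, so $U^{\gamma_A}$ is $\alpha$-harmonic on the open set $A^c$.

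For $\alpha<2$, relation \eqref{det1} is exactly Lemma~\ref{reg-com} together with $\overline A=A$. For \eqref{det2} I would fix $x\in A^c$, choose $B=B(x,r)$ with $\overline B\subset A^c$, and use the mean-value representation $U^{\gamma_A}(x)=\int U^{\gamma_A}\,d\varepsilon_x^{B^c}$ of the $\alpha$-harmonic function $U^{\gamma_A}$ on $B$ (the Riesz--Poisson formula, where $B^c=\mathbb R^n\setminus B$ carries $S(\gamma_A)$). Since $U^{\gamma_A}\le1$ and $\varepsilon_x^{B^c}(\mathbb R^n)\le1$ by Corollary~\ref{ineq-en}, the equality $U^{\gamma_A}(x)=1$ would force $U^{\gamma_A}=1$ holding $\varepsilon_x^{B^c}$-a.e. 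The decisive point, special to $\alpha<2$, is that $S(\varepsilon_x^{B^c})=B^c$ and $\varepsilon_x^{B^c}$ is absolutely continuous there, so $U^{\gamma_A}=1$ Lebesgue-a.e.\ on $B^c$; as $U^{\gamma_A}$ is $\alpha$-superharmonic it equals the limit of its ball averages, whence $U^{\gamma_A}\equiv1$, a contradiction. This gives \eqref{det2}. Running the same argument at a point $x_1\in A\cap B$ with $U^{\gamma_A}(x_1)=1$ (which exists because $A=\breve A$ forces $c_\alpha(A\cap B)>0$, while $U^{\gamma_A}=1$ n.e.\ on $A$) rules out any ball disjoint from $S(\gamma_A)$ meeting $A$, giving $A\subset S(\gamma_A)$ and hence the first line of \eqref{det5}.

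For $\alpha=2$ the core is to prove $\gamma_A=\gamma_{\tilde A}$, where $\tilde A=\Delta^c$ and $\gamma_{\tilde A}$ exists by Lemma~\ref{l-comp}. The inclusion $A\subset\tilde A$ gives $\Theta_{\tilde A}\subset\Theta_A$, hence $U^{\gamma_A}\le U^{\gamma_{\tilde A}}$ at once. For the reverse I must show $U^{\gamma_A}=1$ on every connected component $C\ne\Delta$ of $A^c$, so that $\gamma_A\in\Theta_{\tilde A}$. On such $C$ the function $U^{\gamma_A}$ is harmonic, $\le1$, and equals $1$ q.e.\ on $\partial C\subset A$, so for bounded $C$ the classical maximum principle gives $U^{\gamma_A}\equiv1$; for the remaining components I would use the bridge $U^{\gamma_A}(y)=\varepsilon_y^{A^*}(\mathbb R^n)$, obtained from \eqref{kelv-mmm} and the identity $\gamma_A=\mathcal K_y\varepsilon_y^{A^*}$ in the proof of Lemma~\ref{reg-com}, reading off from the construction of $\Delta$ in Lemma~\ref{l-comp} that the Newtonian balayage loses mass precisely when $y\in\Delta$, whence $U^{\gamma_A}=1$ on $C\ne\Delta$. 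Uniqueness of potentials (cf.\ \cite[Theorem~1.11]{L}) then yields $\gamma_A=\gamma_{\tilde A}$. Granting this, \eqref{det3} is Lemma~\ref{reg-com} applied to the closed set $\tilde A$; and \eqref{det4} follows because $U^{\gamma_A}$ is harmonic on the connected open set $\Delta=\tilde A^c$ with $U^{\gamma_A}\le1$, so equality to $1$ at an interior point would force $U^{\gamma_A}\equiv1$ on $\Delta$, which combined with $U^{\gamma_A}=1$ on $\mathrm{int}\,\tilde A$ (from \eqref{det3} and $\tilde A_I\subset\partial\tilde A$) and the averaging property would give $U^{\gamma_A}\equiv1$, contradicting the first paragraph.

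For the second line of \eqref{det5} I would observe that $U^{\gamma_A}$ is harmonic off $\partial\tilde A$ (constant $1$ on $\mathrm{int}\,\tilde A$, harmonic on $\Delta$), so the associated measure $\gamma_A$ satisfies $S(\gamma_A)\subset\partial\tilde A$; conversely, if some $x_0\in\partial\tilde A$ had a ball $B$ free of $\gamma_A$, then $U^{\gamma_A}$ would be harmonic in $B$, equal to $1$ on the positive-capacity set $B\cap\tilde A$ (hence at an interior maximum point) yet strictly below $1$ on the nonempty $B\cap\Delta$, contradicting the strong maximum principle. I expect the genuine obstacle to lie in the reverse inclusion $\gamma_{\tilde A}\le\gamma_A$ for $\alpha=2$, namely in showing $U^{\gamma_A}=1$ on the unbounded components of $A^c$ other than $\Delta$, where the correspondence between loss of mass under Newtonian balayage and the distinguished component $\Delta$ of Lemma~\ref{l-comp} must be invoked carefully; for $\alpha<2$ the only subtlety is the full-support property $S(\varepsilon_x^{B^c})=B^c$ underlying the strong maximum principle.
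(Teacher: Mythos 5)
Your treatment of the case $\alpha<2$ is sound and essentially coincides with the paper's: (\ref{det1}) is Lemma~\ref{reg-com}; the strict inequality off $S(\gamma_A)$, and then $S(\gamma_A)=A$ via $A=\breve A$, rest on the global (nonlocal) maximum principle for $\alpha$-harmonic functions --- the paper invokes \cite[Theorem~1.28]{L} where you unwind the same fact through the Poisson-type representation and the full support of $\varepsilon_x^{B^c}$ --- and the opening observation that $U^{\gamma_A}\equiv1$ is impossible is also the paper's.

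The genuine gap is in the case $\alpha=2$, at exactly the point you flag yourself: the identification $\gamma_A=\gamma_{\tilde A}$. Your route requires proving $U^{\gamma_A}=1$ on every component $C\ne\Delta$ of $A^c$. For bounded $C$ the extended minimum principle works, but for unbounded components the needed control at infinity is absent, and your proposed ``bridge'' --- that the Newtonian balayage $\varepsilon_y^{A^*}$ loses mass precisely when $y\in\Delta$ --- is precisely the assertion to be proved: Lemma~\ref{l-comp} only delivers the existence and uniqueness of a component $\Delta$ for which $\gamma_{\Delta^c}$ exists, and converting ``$\gamma_{C^c}$ does not exist'' into ``$U^{\gamma_A}=1$ on $C$'' is a nontrivial extra step that you do not carry out. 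The paper avoids the component-by-component analysis entirely: it first establishes (\ref{det3}), (\ref{det4}) and $S(\gamma_{\tilde A})=\partial\tilde A$ with $\gamma_{\tilde A}$ (which exists by Lemma~\ref{l-comp}) in place of $\gamma_A$; since $\gamma_{\tilde A}$ is then absolutely continuous, carried by $\partial\tilde A\subset A$, and has potential equal to $1$ n.e.\ on $\tilde A\supset A$, Corollary~\ref{clos-eq} yields $\gamma_A=\gamma_{\tilde A}$ in one line. Replace your mass-loss argument by this appeal to Corollary~\ref{clos-eq} (equivalently, to the uniqueness of absolutely continuous measures in $\mathfrak M^+_A$ with equal potentials n.e.\ on $A$, \cite[p.~178, Remark]{L}), and the rest of your $\alpha=2$ argument --- (\ref{det3}) from Lemma~\ref{reg-com}, (\ref{det4}) from harmonicity on the connected set $\Delta$, and the two inclusions for $S(\gamma_A)$ --- goes through as in the paper.
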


\begin{proof}Assume first that $\alpha<2$. Noting that (\ref{det1}) has been established in Lemma~\ref{reg-com}, we first prove
\begin{equation}\label{lesssup}U^{\gamma_A}<1\text{ \ on\ }S(\gamma_A)^c.\end{equation}
Suppose that this fails for some $x_0\in S(\gamma_A)^c$. Then, according to (\ref{eq-ex1}),
\begin{equation}\label{one}U^{\gamma_A}(x_0)=1.\end{equation}
Choose $\varepsilon>0$ so that $\overline{B}(x_0,\varepsilon)\subset S(\gamma_A)^c$. Since
$U^{\gamma_A}$ is $\alpha$-har\-monic on $B(x_0,\varepsilon)$ \cite[Chapter~I, Section~6, n$^\circ$~20]{L} and continuous on $\overline{B}(x_0,\varepsilon)$, we conclude
from (\ref{eq-ex1}) and (\ref{one}) with the aid of \cite[Theorem~1.28]{L} that
$U^{\gamma_A}=1$ a.e.\ on $\mathbb R^n$. By the definition of $\alpha$-super\-har\-mon\-icity, this yields $U^{\gamma_A}=1$ on $\mathbb R^n$. Hence, $\gamma_A$ serves as an inner
equilibrium measure on the whole of $\mathbb R^n$, which is impossible (e.g.\ by Theorem~\ref{l-eq-ex}).

To prove the former equality in (\ref{det5}), suppose to the contrary that there is
$x_1\in A$ such that $x_1\not\in S(\gamma_A)$, and consider an open neighborhood $V\subset S(\gamma_A)^c$
of $x_1$. In view of (\ref{lesssup}), $U^{\gamma_A}<1$ on $V$. On the other hand, since $c_\alpha(V\cap A)>0$ because of the assumption $A=\breve{A}$, we see from (\ref{eq-ex0}) that $U^{\gamma_A}(x_2)=1$ for some $x_2\in V\cap A$. The contradiction obtained shows that, indeed, $S(\gamma_A)=A$.
Substituting this equality into (\ref{lesssup}) establishes (\ref{det2}).

In the rest of the proof, $\alpha=2$. We first establish (\ref{det4}) and the latter relation in (\ref{det5}) for $\gamma_{\tilde{A}}$ in place of $\gamma_A$.
(Relation (\ref{det3}) with $\gamma_{\tilde{A}}$ in place of $\gamma_A$ holds according to Lemma~\ref{reg-com}.)
Suppose that (\ref{det4}) fails
for some $x_3\in\tilde{A}^c$. By (\ref{eq-ex1}) applied to $\tilde{A}$, then the function $U^{\gamma_{\tilde{A}}}$ takes its maximum value $1$ at $x_3$, and hence everywhere on $\tilde{A}^c$, $U^{\gamma_{\tilde{A}}}$ being harmonic on the domain $\tilde{A}^c$.
This combined with (\ref{eq-ex0}) gives $U^{\gamma_{\tilde{A}}}=1$ n.e.\ on $\mathbb R^n$, which is impossible (e.g.\ by Theorem~\ref{l-eq-ex}).

By use of \cite[Theorem~1.12]{L}, we observe from (\ref{eq-ex0}) applied to $\tilde{A}$ that the restriction of $\gamma_{\tilde{A}}$ to the interior of $\tilde{A}$ equals $0$, and so $S(\gamma_{\tilde{A}})\subset\partial\tilde{A}$.
For the converse, suppose to the contrary that there is $x_4\in\partial\tilde{A}$ such that $x_4\notin S(\gamma_{\tilde{A}})$. Choose an open neighborhood $V_1$ of $x_4$ so that $V_1\cap S(\gamma_{\tilde{A}})=\varnothing$. Since $c_2(V_1\cap\tilde{A})>0$, $U^{\gamma_{\tilde{A}}}$ takes the value $1$ at some point in $V_1$, and hence everywhere on $V_1$, again by the maximum principle. This contradicts (\ref{det4}).

The proof is completed by noting that $\gamma_A=\gamma_{\tilde{A}}$. Indeed, as $\partial\tilde{A}\subset A\subset\tilde{A}$, both $\gamma_A$ and $\gamma_{\tilde{A}}$ are supported by $A$ and have the potentials equal to $1$ n.e.\ on $A$. Being absolutely continuous by Lemma~\ref{l-eq-ar}, these measures must be equal according to Corollary~\ref{clos-eq}.
\end{proof}

\section{Integral representation of inner swept measure and applications}\label{onemore}

Throughout this section we assume that
\[\Omega:=\overline{A}^{\,c}\ne\varnothing.\]

\subsection{Integral representation of inner swept measure}\label{sec-repr}
Lemma~\ref{ad} and Theorem~\ref{th-int-rep} below strengthen \cite[Lemma~3.16, Theorem~3.17]{FZ}, dealing with balayage onto closed sets. For the notion of a $\mu$\emph{-ad\-equ\-ate\/} family of measures, see \cite[Chapter~V, Section~3, n$^\circ$~1, Definition~1]{B2}.

\begin{lemma}\label{ad}For every\/ $\mu\in\mathfrak M^+_\Omega$, the family\/ $(\varepsilon_y^A)_{y\in\Omega}$ is\/ $\mu$-adeq\-uate; that is,
\begin{itemize}
\item[\rm{(a)}] for any\/ $f\in C_0(\mathbb R^n)$, the function\/ $y\mapsto\int f\,d\varepsilon_y^A$ is\/ $\mu$-in\-tegrable on\/ $\Omega$,
\item[\rm{(b)}] the map\/ $y\mapsto\varepsilon_y^A$ is vaguely\/ $\mu$-measurable on\/ $\Omega$.
\end{itemize}\end{lemma}

\begin{proof}The proof of (a) repeats word-by-word the proof of Lemma~3.16(a) in \cite{FZ}, except for applying our relations (\ref{alternative}) and (\ref{t-mass}) instead of \cite[Eqs.~(3.11), (3.18)]{FZ}, the concept of balayage being now understood in a way described in Section~\ref{sec-bal} above.
Since $U^{\mu^A}$ is finitely continuous on $\Omega$, the inner swept measure $\mu^A$ being supported by $\overline{A}$, (b) can likewise be obtained by an adaptation of \cite[Proof of Lemma~3.16(b)]{FZ}.\end{proof}

\begin{theorem}\label{th-int-rep}For any $\mu\in\mathfrak M^+_\Omega$, the integral representation\/ {\rm(\ref{L-repr})} holds.
\end{theorem}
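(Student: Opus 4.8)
The plan is to prove the integral representation
\[
\mu^A=\int\varepsilon_y^A\,d\mu(y)
\]
by first invoking Lemma~\ref{ad}, which establishes that the family $(\varepsilon_y^A)_{y\in\Omega}$ is $\mu$-adequate. Once $\mu$-adequacy is in hand, the integral $\int\varepsilon_y^A\,d\mu(y)$ is well defined as a measure $\tilde\mu\in\mathfrak M^+$ in the sense of \cite[Chapter~V, Section~3, n$^\circ$~3]{B2}; the whole point of the adequacy hypothesis is precisely to license this construction and the attendant Fubini-type interchange. So the first step is simply to record that $\tilde\mu:=\int\varepsilon_y^A\,d\mu(y)$ exists as a positive Radon measure.

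The core of the argument is then to show $\tilde\mu=\mu^A$ by verifying that $\tilde\mu$ satisfies the defining symmetry identity of Definition~\ref{def-bal}, namely $E(\tilde\mu,\lambda)=E(\mu,\lambda^A)$ for all $\lambda\in\mathcal E^+$, and then appealing to the uniqueness clause of Theorem~\ref{th-bala-2}. To this end, fix $\lambda\in\mathcal E^+$ and compute $E(\tilde\mu,\lambda)=\int U^\lambda\,d\tilde\mu$. By the integration theory for $\mu$-adequate families, this unfolds as
\[
\int U^\lambda\,d\tilde\mu=\int\Bigl(\int U^\lambda\,d\varepsilon_y^A\Bigr)\,d\mu(y)=\int E(\varepsilon_y^A,\lambda)\,d\mu(y).
\]
Now for each fixed $y\in\Omega$, apply the symmetry identity (\ref{alternative2}) from Corollary~\ref{th-alt2}, with $\theta=\varepsilon_y$ playing the role there, which gives $E(\varepsilon_y^A,\lambda)=E(\varepsilon_y,\lambda^A)=U^{\lambda^A}(y)$. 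Substituting this back yields
\[
E(\tilde\mu,\lambda)=\int U^{\lambda^A}(y)\,d\mu(y)=\int U^{\lambda^A}\,d\mu=E(\mu,\lambda^A),
\]
which is exactly the required identity. By the uniqueness of the inner balayage established in Theorem~\ref{th-bala-2}, we conclude $\tilde\mu=\mu^A$, i.e.\ the representation (\ref{L-repr}) holds.

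The main obstacle I anticipate is justifying the interchange of integration in the first display, that is, the passage $\int U^\lambda\,d\tilde\mu=\int(\int U^\lambda\,d\varepsilon_y^A)\,d\mu(y)$. For a general l.s.c.\ integrand $U^\lambda$ this is not automatic; it relies essentially on the $\mu$-adequacy supplied by Lemma~\ref{ad}, which guarantees both the $\mu$-measurability of $y\mapsto\int f\,d\varepsilon_y^A$ for $f\in C_0(\mathbb R^n)$ and the integrability needed to build $\tilde\mu$, and then upgrading from $f\in C_0(\mathbb R^n)$ to the lower semicontinuous potential $U^\lambda$ by the usual monotone approximation of $U^\lambda$ from below by an increasing sequence in $C_0^+(\mathbb R^n)$ together with monotone convergence applied on both sides. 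One must also confirm that $E(\varepsilon_y^A,\lambda)$ is finite $\mu$-a.e.\ so that the computation is not vacuous; this follows since $\lambda\in\mathcal E^+$ forces $U^{\lambda^A}$ to be finite n.e., hence $\mu$-a.e.\ (as $\mu$ is carried by $\Omega\subset\overline{A}^{\,c}$ and $\varepsilon_y^A$ is absolutely continuous). With these measure-theoretic points secured via Lemma~\ref{ad}, the remainder is a direct application of the symmetry identities already proved.
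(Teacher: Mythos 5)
Your proposal is correct and follows essentially the same route as the paper: both rest on Lemma~\ref{ad} to define $\int\varepsilon_y^A\,d\mu(y)$, extend the defining identity from $C_0(\mathbb R^n)$ to positive l.s.c.\ functions via \cite[Chapter~V, Section~3, Proposition~1]{B2}, apply the symmetry identity to $\varepsilon_y$, and conclude by the uniqueness clause of Theorem~\ref{th-bala-2}. The only (cosmetic) difference is that you evaluate $E(\tilde\mu,\lambda)$ as $\int U^\lambda\,d\tilde\mu$ directly with $f=U^\lambda$ in the l.s.c.\ extension, whereas the paper first derives the pointwise formula $U^\nu(x)=\int U^{\varepsilon_y^A}(x)\,d\mu(y)$ and then swaps the order of integration by Fubini.
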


\begin{proof}
Fix $\mu\in\mathfrak M^+_\Omega$. Since the family $(\varepsilon_y^A)_{y\in\Omega}$ is $\mu$-ad\-equ\-ate, we can according to \cite[Chapter~V, Section~3, n$^\circ$~2]{B2} define the Radon measure $\nu=\int\varepsilon_y^A\,d\mu(y)$ on $\mathbb R^n$ by means of the formula
\[
\int f(z)\,d\nu(z)=\int\biggl(\int f(z)\,d\varepsilon_y^A(z)\biggr)\,d\mu(y)\text{ \ for every\ }f\in C_0(\mathbb R^n).
\]
According to \cite[Chapter~V, Section~3, Proposition~1]{B2}, this identity remains valid when $f$ is allowed to be any positive l.s.c.\ function on $\mathbb R^n$. For given $x\in\mathbb R^n$ we apply this to $f(z)=|x-z|^{\alpha-n}$, $z\in\mathbb R^n$:
\begin{equation}\label{repr-th1}
U^\nu(x)=\int\biggl(\int|x-z|^{\alpha-n}\,d\varepsilon_y^A(z)\biggr)\,d\mu(y)=\int U^{\varepsilon_y^A}(x)\,d\mu(y).
\end{equation}

To establish (\ref{L-repr}), it remains to show that $\nu=\mu^A$, or equivalently (cf.\ Definition~\ref{def-bal})
\[E(\nu,\lambda)=E(\mu,\lambda^A)\text{ \ for every\ }\lambda\in\mathcal E^+.\]
Applying (\ref{alternative}) with $\mu=\varepsilon_y$ and (\ref{repr-th1}), we get by Fubini's theorem
\begin{align*}E(\nu,\lambda)&=\int U^\nu(x)\,d\lambda(x)=\int\biggl(\int U^{\varepsilon_y^A}(x)\,d\mu(y)\biggr)\,d\lambda(x)\\
  {}&={\int\biggl(\int U^{\varepsilon_y^A}(x)\,d\lambda(x)\biggr)\,d\mu(y)
  =\int\biggl(\int U^{\varepsilon_y}(x)\,d\lambda^A(x)\biggr)\,d\mu(y)}\\
  {}&=\int\biggl(\int|x-y|^{\alpha-n}\,d\mu(y)\biggr)\,d\lambda^A(x)
  =\int U^\mu\,d\lambda^A=E(\mu,\lambda^A),
\end{align*}
which is the required identity.\end{proof}

\begin{corollary}\label{C}For any\/ $\mu\in\mathfrak M^+_\Omega$, $\mu^A$ is absolutely continuous.
\end{corollary}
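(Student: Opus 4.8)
The plan is to combine the integral representation of Theorem~\ref{th-int-rep} with the observation that each $\varepsilon_y^A$, $y\in\Omega$, has finite energy and is therefore absolutely continuous. Fix $\mu\in\mathfrak M^+_\Omega$ and a compact set $K\subset\mathbb R^n$ with $c_\alpha(K)=0$; the goal is to prove $\mu^A(K)=0$. Since $\Omega=\overline{A}^{\,c}$, every $y\in\Omega$ lies outside $\overline{A}$, so by Remark~\ref{en-fin} we have $\varepsilon_y^A\in\mathcal E^+$, and consequently $\varepsilon_y^A$ is absolutely continuous (cf.\ the consequence of Lemma~\ref{2.3.1}). In particular $\varepsilon_y^A(K)=0$ for every $y\in\Omega$, and the whole matter reduces to justifying the Fubini type identity $\mu^A(K)=\int\varepsilon_y^A(K)\,d\mu(y)$.

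The main obstacle is that the defining identity for the measure $\int\varepsilon_y^A\,d\mu(y)$ supplied by $\mu$-adequacy (Lemma~\ref{ad}) has been extended, in the proof of Theorem~\ref{th-int-rep}, only to positive l.s.c.\ integrands, whereas $1_K$ is merely upper semicontinuous. I would circumvent this by approximating $K$ from above by open sets. Choose open sets $G_m\downarrow K$ with $\overline{G_1}$ compact, e.g.\ $G_m:=\{x:\ \mathrm{dist}(x,K)<1/m\}$; then $\bigcap_m G_m=K$, as $K$ is closed. Because each $1_{G_m}$ is l.s.c., the extended identity from the proof of Theorem~\ref{th-int-rep} applies and yields
\[\mu^A(G_m)=\int\varepsilon_y^A(G_m)\,d\mu(y)\text{ \ for every\ }m.\]

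It then remains to pass to the limit on both sides. On the left, $\mu^A(G_m)\downarrow\mu^A(K)$ by continuity of the Radon measure $\mu^A$ from above, which is legitimate since $\mu^A(G_1)\leqslant\mu^A(\overline{G_1})<\infty$. On the right, for each fixed $y\in\Omega$ the measure $\varepsilon_y^A$ is bounded (indeed $\varepsilon_y^A(\mathbb R^n)\leqslant1$ by (\ref{t-mass})), whence $\varepsilon_y^A(G_m)\downarrow\varepsilon_y^A(K)=0$; moreover the integrands are dominated by $y\mapsto\varepsilon_y^A(G_1)$, which is $\mu$-integrable because $\int\varepsilon_y^A(G_1)\,d\mu(y)=\mu^A(G_1)<\infty$. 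Dominated convergence therefore gives $\int\varepsilon_y^A(G_m)\,d\mu(y)\to0$, and comparing the two limits yields $\mu^A(K)=0$. Since $K$ was an arbitrary compact set of zero inner capacity, $\mu^A$ is absolutely continuous, as claimed.
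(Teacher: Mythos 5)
Your proof is correct and follows essentially the same route as the paper: both rest on the integral representation of Theorem~\ref{th-int-rep} together with the fact that each $\varepsilon_y^A$, $y\in\Omega$, has finite energy (Remark~\ref{en-fin}) and hence charges no compact set of zero inner capacity. The only difference is that the paper obtains the identity $\mu^A(K)=\int\varepsilon_y^A(K)\,d\mu(y)$ in one stroke from Bourbaki's theorem on adequate families (Chapter~V, Section~3, Theorem~1), whereas you re-derive this special case by approximating $K$ from above by open sets and passing to the limit -- a legitimate, slightly more self-contained justification of the same step (the $\mu$-measurability of $y\mapsto\varepsilon_y^A(G_m)$, needed for dominated convergence, follows from Lemma~\ref{ad} by exhausting the open set $G_m$ with an increasing sequence of functions from $C_0(\mathbb R^n)$).
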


\begin{proof}Consider a compact set $K\subset\overline{A}$ with $c_\alpha(K)=0$. For every $y\in\Omega$, the swept measure $\varepsilon_y^A$ has finite energy (see Remark~\ref{en-fin}), and hence $\varepsilon_y^A(K)=0$. Applying \cite[Chapter~V, Section~3, Theorem~1]{B2}, we obtain by (\ref{L-repr})
\[\int 1_K\,d\mu^A=\int\,d\mu(y)\int 1_K(x)\,d\varepsilon_y^A(x)=0,\]
and hence $\mu^A$ is indeed absolutely continuous.\end{proof}

\begin{corollary}\label{unicity}For any\/ $\mu\in\mathfrak M^+_\Omega$ and\/ $A$ closed, $\mu^A$ is determined uniquely by\/ {\rm(\ref{eq-bala-f1})} among the absolutely continuous measures of the class\/ $\mathfrak M^+_A$.\end{corollary}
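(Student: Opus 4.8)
The plan is to exhibit $\mu^A$ as one member of the asserted class and then invoke the uniqueness principle for absolutely continuous measures already used in the proof of Corollary~\ref{clos-eq}. First I would record that, since $A$ is closed, $\Omega=\overline{A}^{\,c}=A^c$, so the hypothesis $\mu\in\mathfrak M^+_\Omega$ simply means that $\mu$ is carried by $A^c$. I would then check that $\mu^A$ itself qualifies as an absolutely continuous measure of $\mathfrak M^+_A$ satisfying (\ref{eq-bala-f1}): by Theorem~\ref{th-bala-2} it obeys $U^{\mu^A}=U^\mu$ n.e.\ on $A$; by Remark~\ref{bal-support} it is carried by $\overline{A}=A$; and by Corollary~\ref{C} it is absolutely continuous. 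Thus $\mu^A$ lies in the class and satisfies the characterizing relation.

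Next I would take an arbitrary competitor, namely an absolutely continuous $\theta\in\mathfrak M^+_A$ with $U^\theta=U^\mu$ n.e.\ on $A$. Combining this with (\ref{eq-bala-f1}) gives $U^\theta=U^{\mu^A}$ n.e.\ on $A$, i.e.\ the two potentials agree on $A$ outside a set $N\subset A$ with $c_\alpha(N)=0$. Since $\theta$ and $\mu^A$ are both absolutely continuous, neither charges $N$, so the two potentials coincide $\theta$-a.e.\ and $\mu^A$-a.e.\ as well. At this point the situation is exactly that covered by the remark on uniqueness of absolutely continuous measures carried by a closed set: two such measures whose potentials are equal n.e.\ on that set must coincide (cf.\ \cite[p.~178, Remark]{L}, invoked already in the proof of Corollary~\ref{clos-eq}). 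Applying it yields $\theta=\mu^A$, which is the asserted uniqueness.

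The substance of the argument is not in these final lines but in the inputs on which they rest. The decisive ingredient is Corollary~\ref{C}, which guarantees that $\mu^A$ is absolutely continuous; without absolute continuity, equality of potentials n.e.\ on $A$ would not pin down the measure, since (\ref{eq-bala-f1}) alone fails to characterize $\mu^A$ for general $\mu$ (as noted right after Theorem~\ref{th-bala-2}). Consequently the only real work, carried out through the integral representation (\ref{L-repr}) of Theorem~\ref{th-int-rep} and its Corollary~\ref{C}, is already behind us. The sole (and mild) obstacle is to confirm that the cited uniqueness principle applies verbatim here, i.e.\ that both $\theta$ and $\mu^A$ are absolutely continuous measures carried by one and the same closed set $A$; once this is verified, the conclusion is immediate.
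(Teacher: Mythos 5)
Your argument is correct and follows exactly the paper's route: the paper's proof of this corollary is the one-line observation that it follows from Corollary~\ref{C} (absolute continuity of $\mu^A$) together with the uniqueness remark in \cite[p.~178, Remark]{L} for absolutely continuous measures of $\mathfrak M^+_A$ whose potentials agree n.e.\ on $A$ --- precisely the two ingredients you identify and combine. Your write-up merely makes explicit the verification that $\mu^A$ itself belongs to the asserted class, which the paper leaves implicit.
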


\begin{proof}This follows from Corollary~\ref{C} in view of \cite[p.~178, Remark]{L}.\end{proof}

\subsection{Description of support of inner swept measure}\label{sec-desc} Let $A$ be closed and $\mu\in\mathfrak M^+_\Omega$.
To establish a description of $S(\mu^A)$, we assume that $A$ coincides with its reduced kernel $\breve{A}$, while $\mu$ is carried by a connected component $\Omega_0$ of $\Omega$. This involves no loss of generality, which is seen from (\ref{lin}) and (\ref{redbal}) (with $A'=\breve{A}$).

\begin{theorem}\label{desc-sup} Under these hypotheses,
\begin{equation}\label{lemma-desc-riesz}
S(\mu^A)=\left\{
\begin{array}{lll} A & \text{if} & \alpha<2,\\ \partial\Omega_0  & \text{if} & \alpha=2.\\
\end{array} \right.
\end{equation}
\end{theorem}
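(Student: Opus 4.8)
The plan is to establish the two cases of (\ref{lemma-desc-riesz}) separately, exploiting the analogy with the equilibrium case treated in Theorem~\ref{desc-eq}. For the case $\alpha<2$, I would first record that $U^{\mu^A}=U^\mu$ n.e.\ on $A$ by (\ref{eq-bala-f1}) (cf.\ Theorem~\ref{th-bala-2}), while $\mu$ is carried by $\Omega=\overline{A}^{\,c}$ so that $U^\mu$ is finitely continuous on a neighborhood of $A$. First I would show $U^{\mu^A}<U^\mu$ on $S(\mu^A)^c$, mimicking the argument for (\ref{lesssup}): if equality held at some $x_0\notin S(\mu^A)$, then on a small ball $\overline{B}(x_0,\varepsilon)\subset S(\mu^A)^c$ the potential $U^{\mu^A}$ is $\alpha$-harmonic, whereas $U^\mu$ is $\alpha$-superharmonic (being a Riesz potential of a measure carried by $\Omega$, hence with no mass in the ball if the ball avoids $S(\mu)$; one must position the ball to also avoid $S(\mu)$, which is possible since $S(\mu)\subset\Omega$ is disjoint from $\overline{A}$). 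The difference $U^\mu-U^{\mu^A}$ would then be a nonnegative $\alpha$-superharmonic function vanishing at an interior point, forcing it to vanish identically by \cite[Theorem~1.28]{L}, contradicting $U^{\mu^A}<U^\mu$ somewhere (which must happen since $\mu\ne\mu^A$ as $\mu$ is carried off $\overline{A}$).

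Granting that strict inequality on $S(\mu^A)^c$, I would then prove $S(\mu^A)=A$ by the same contradiction scheme as in Theorem~\ref{desc-eq}: if some $x_1\in A$ were not in $S(\mu^A)$, pick an open $V\subset S(\mu^A)^c$ containing $x_1$; on $V$ we have $U^{\mu^A}<U^\mu$, yet the assumption $A=\breve{A}$ gives $c_\alpha(V\cap A)>0$, so by (\ref{eq-bala-f1}) and Lemma~\ref{subad} there is a point of $V\cap A$ where $U^{\mu^A}=U^\mu$, a contradiction. The reverse inclusion $S(\mu^A)\subset A$ follows because for $\alpha<2$ the potential $U^{\mu^A}$ is $\alpha$-harmonic off $S(\mu^A)$ and the n.e.\ coincidence with $U^\mu$ forces the support into $A=\breve A$; more directly, $S(\mu^A)\subset\overline A=A$ by Remark~\ref{bal-support} together with $A$ closed, and the strict-inequality step pins it to exactly $A$.

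For $\alpha=2$, the Newtonian case, I expect the argument to parallel the $\alpha=2$ portion of Theorem~\ref{desc-eq}, with $\partial\Omega_0$ playing the role of $\partial\tilde A$. The key structural input is that $\mu$ is carried by a single connected component $\Omega_0$ of $\Omega$, and that $U^\mu$ is harmonic on each component of $\Omega$ not meeting $S(\mu)$. I would show $U^{\mu^A}=U^\mu$ throughout $\Omega_0$ off $S(\mu)$ by a maximum-principle argument: the difference is harmonic and nonnegative on the relevant open set and vanishes n.e.\ on the boundary portion lying in $A$, forcing $S(\mu^A)\subset\partial\Omega_0$; conversely every boundary point of $\Omega_0$ lies in the support because of $A=\breve A$ and the maximum principle, exactly as $x_4\in\partial\tilde A$ was handled in Theorem~\ref{desc-eq}. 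Using \cite[Theorem~1.12]{L} I would confirm that $\mu^A$ places no mass in the interior of $A$.

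The main obstacle I anticipate is the $\alpha=2$ case: unlike the equilibrium problem where $U^{\gamma_A}\leqslant1$ is a global bound with a clean harmonic extension across all complementary components, here $U^\mu$ itself varies and $\mu$ may have mass approaching $\partial\Omega_0$, so I must argue carefully that the coincidence set and the harmonicity regions interact correctly across the single component $\Omega_0$ rather than across the full open set $\Omega$. The reduction to $\mu$ carried by one component $\Omega_0$ and $A=\breve A$ (justified by (\ref{lin}) and (\ref{redbal})) is what makes this tractable, and the delicate point is ensuring that the harmonic difference $U^\mu-U^{\mu^A}$ has the right boundary behavior precisely on $\partial\Omega_0$, so that the support description is $\partial\Omega_0$ and not merely a subset of $\partial A$.
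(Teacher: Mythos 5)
Your route is genuinely different from the paper's. The paper disposes of both cases in a few lines: by Theorem~\ref{th-int-rep} the integral representation (\ref{L-repr}) holds for $\mu\in\mathfrak M^+_\Omega$, and by Lemma~\ref{l2} each $\varepsilon_y^A$, $y\in\Omega_0$, is the Kelvin transform $\mathcal K_y\gamma_{K_y}$ of the equilibrium measure of the compact set $K_y=J_y(\mathrm{Cl}_{\overline{\mathbb R^n}}A)$; the known support description of compact equilibrium measures (cf.\ (\ref{det5})) then transfers through $J_y$ to give $S(\varepsilon_y^A)=A$ resp.\ $\partial\Omega_0$, and integration over $y$ yields (\ref{lemma-desc-riesz}). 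Your direct maximum-principle argument avoids both Theorem~\ref{th-int-rep} and Lemma~\ref{l2}, which is a legitimate alternative for $\alpha<2$; note only that your worry about positioning the ball to avoid $S(\mu)$ is unnecessary (and your claim that $S(\mu)$ is disjoint from $\overline A$ is unjustified, since the closed support of a measure carried by the open set $\Omega$ may cluster at $\partial A$): at a point $x_2\notin S(\mu^A)$ with $U^{\mu^A}(x_2)=U^\mu(x_2)$, the global $\alpha$-superharmonicity of $U^\mu$ together with the $\alpha$-harmonicity of $U^{\mu^A}$ on a ball avoiding only $S(\mu^A)$ already gives the sub-mean-value inequality for the nonnegative difference, and the strict positivity of the $\alpha$-harmonic measure density on the whole exterior of the ball forces $U^\mu=U^{\mu^A}$ a.e., hence $\mu=\mu^A$, which is absurd for a nonzero $\mu$ carried by $\Omega$.

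The $\alpha=2$ case, however, contains a genuine error as written. You propose to ``show $U^{\mu^A}=U^\mu$ throughout $\Omega_0$ off $S(\mu)$'' and to deduce $S(\mu^A)\subset\partial\Omega_0$ from it. That identity is false: on $\Omega_0$ one has the \emph{strict} inequality $U^{\mu^A}<U^\mu$ (the nonnegative difference is superharmonic on $\Omega_0$, so if it vanished at one point of $\Omega_0$ it would vanish identically there, and combined with (\ref{eq-bala-f1}) this would force $\mu^A=\mu$, impossible). Moreover, even a coincidence of the two potentials on an open subset of $\Omega_0$ would only tell you (via Weyl's lemma) that $\mu^A$ and $\mu$ agree there --- which is vacuous, since $\mu^A$ is carried by $A=\Omega^c$. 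What actually kills the mass of $\mu^A$ off $\partial\Omega_0$ is the equality $U^{\mu^A}=U^\mu$ on the \emph{other} connected components of $\Omega$ and on $\mathrm{int}\,A$: on each component $\Omega_1\ne\Omega_0$ the difference $U^\mu-U^{\mu^A}$ is harmonic, nonnegative, tends to $0$ at infinity, and has boundary values $0$ quasi-everywhere on $\partial\Omega_1\subset A$, so a boundary minimum principle admitting exceptional sets of zero capacity forces it to vanish on $\Omega_1$; together with the vanishing a.e.\ on $\mathrm{int}\,A$ this yields harmonicity of $U^{\mu^A}$ across every point of $A\setminus\partial\Omega_0$, hence $S(\mu^A)\subset\partial\Omega_0$. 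None of this is in your sketch, and the step you do state would not produce it. The reverse inclusion $\partial\Omega_0\subset S(\mu^A)$ then does follow your pattern (a neighborhood $V_1$ of $x_4\in\partial\Omega_0$ avoiding $S(\mu^A)$ meets $\Omega_0$, where $U^{\mu^A}<U^\mu$, yet contains a point of $A$ where the potentials coincide, contradicting the minimum principle for the superharmonic difference on $V_1$), but it relies on the strict inequality on $\Omega_0$ that your sketch denies.
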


\begin{proof}For any $y\in\Omega_0$, we denote $K_y$ the $J_y$-image of $\mathrm{Cl}_{\overline{\mathbb R^n}}A$, and $\gamma_{K_y}$ the equilibrium measure on the (compact) set $K_y$. Since the $J_y$-im\-age of any $E\subset A$ with $c_\alpha(E)=0$ has again zero inner capacity (cf.\ footnote~\ref{foot1}), $K_y$ coincides with its reduced kernel. Thus, $S(\gamma_{K_y})=K_y$ for $\alpha<2$, while for $\alpha=2$, $S(\gamma_{K_y})$ coincides with the \emph{outer boundary\/} of $K_y$, that is, the boundary of the unbounded connected component of $K_y^c$ (see (\ref{det5}) or \cite[Chapter~II, Section~3, n$^\circ$~13]{L}). Having now applied the integral representation (\ref{L-repr}) (which holds under the stated hypotheses, see Theorem~\ref{th-int-rep}) and the fact that for every $y\in\Omega_0$, $\varepsilon_y^A$ is the Kelvin transform of $\gamma_{K_y}$ (see Lemma~\ref{l2}), we obtain (\ref{lemma-desc-riesz}).\end{proof}

\subsection{Further criteria for the existence of inner equilibrium measure}\label{fur-cr} Finally, we provide necessary and sufficient conditions for the existence of $\gamma_A$, given in terms of $\mu^A(\mathbb R^n)$ with $\mu$ suitably chosen.
Since $\gamma_A$ and $\gamma_{\breve{A}}$ exist or do not exist simultaneously, we can certainly assume that $A=\breve{A}$.

\begin{theorem}\label{bal-mass-th} $\gamma_A$ exists if there is a measure\/ $\mu\in\mathfrak M^+_\Omega$ with
\begin{equation}\label{mass-measure}\mu^A(\mathbb R^n)<\mu(\mathbb R^n).\end{equation}
\end{theorem}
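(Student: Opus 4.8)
The plan is to reduce the existence of $\gamma_A$ to the inner $\alpha$-irregularity of a single point under inversion, using the integral representation of $\mu^A$ to locate that point. I would first dispose of the trivial case: if $A$ is bounded, then $c_\alpha(A)\leqslant c_\alpha(\overline{B})<\infty$ for any closed ball $\overline{B}\supset A$, so $\gamma_A$ exists by Lemma~\ref{l-eq-finite} and there is nothing to prove. Hence I may assume $A$ is unbounded. To extract a favorable point, I would invoke Theorem~\ref{th-int-rep}, which gives $\mu^A=\int\varepsilon_y^A\,d\mu(y)$, and then push this identity through the positive l.s.c.\ function $f\equiv1$ exactly as in the proof of Corollary~\ref{C}, obtaining $\mu^A(\mathbb R^n)=\int\varepsilon_y^A(\mathbb R^n)\,d\mu(y)$. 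Since $\varepsilon_y^A(\mathbb R^n)\leqslant1$ for every $y$ by (\ref{t-mass}), the strict inequality (\ref{mass-measure}) cannot coexist with $\varepsilon_y^A(\mathbb R^n)=1$ holding for all $y\in\Omega$; therefore there exists $y\in\Omega$ with $\varepsilon_y^A(\mathbb R^n)<1$. I fix such a $y$ and write $A^*$ for its $J_y$-image (note $y\notin A$, so this is the $J_y$-image of $A\setminus\{y\}$).

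The central step is to translate the mass defect into a strict potential inequality. Because $\varepsilon_y^A$ is carried by $\overline{A}$, which omits $y$, its Kelvin transform $(\varepsilon_y^A)^*$ is defined; combining (\ref{KP}) with (\ref{eq-bala-f1}) applied to $\varepsilon_y$ shows that $U^{(\varepsilon_y^A)^*}=1$ n.e.\ on $A^*$, so $(\varepsilon_y^A)^*\in\Theta_{A^*}$ and $\gamma_{A^*}$ exists by Lemma~\ref{l-eq-ar}. The minimality in Definition~\ref{def-eq} together with (\ref{kelv-mmm}) then yields
\[U^{\gamma_{A^*}}(y)\leqslant U^{(\varepsilon_y^A)^*}(y)=\varepsilon_y^A(\mathbb R^n)<1.\]

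Next I would argue that $y$ is inner $\alpha$-ir\-reg\-ular for $A^*$. Since $A$ is unbounded and $J_y$ extends to a homeomorphism of $\overline{\mathbb R^n}$ carrying the point at infinity to $y$, the point $y$ is a cluster point of $A^*$, i.e.\ $y\in\overline{A^*}$. Were $y$ inner $\alpha$-reg\-ular for $A^*$, then Lemma~\ref{reg-com} applied to $A^*$ would force $U^{\gamma_{A^*}}(y)=1$, contradicting the displayed inequality; hence $y\in(A^*)_I$. Finally I would invoke Theorem~\ref{nands} with $A^*$ in the role of the set and $y$ as the centre of inversion: as $(A^*)^*=A$, the inner $\alpha$-ir\-reg\-ularity of $y$ for $A^*$ is equivalent to the existence of $\gamma_A$, which completes the argument.

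I expect the main obstacle to be twofold. The first delicate point is justifying $\mu^A(\mathbb R^n)=\int\varepsilon_y^A(\mathbb R^n)\,d\mu(y)$, which requires extending the integral representation (\ref{L-repr}) to the non-com\-pactly-sup\-ported l.s.c.\ function $f\equiv1$ and thus rests squarely on the $\mu$-ad\-equacy of $(\varepsilon_y^A)_{y\in\Omega}$ established in Lemma~\ref{ad}. The second is the interpretation of the lost mass $\varepsilon_y^A(\mathbb R^n)<1$ as a strictly sub-unit value of an equilibrium potential under the Kelvin transform; once this is secured, Lemma~\ref{reg-com} and Theorem~\ref{nands} convert the strict potential inequality into the desired existence with no further computation.
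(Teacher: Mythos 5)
Your proof is correct and follows essentially the same route as the paper's: both hinge on the identity $\mu^A(\mathbb R^n)=\int\varepsilon_y^A(\mathbb R^n)\,d\mu(y)$ obtained from Theorem~\ref{th-int-rep}, and on converting the swept mass $\varepsilon_y^A(\mathbb R^n)$ into the value $U^{\gamma_{A^*}}(y)$ via the Kelvin transform, before invoking Lemma~\ref{reg-com} and the irregularity--existence equivalence of Theorem~\ref{nands}. The only difference is organizational: the paper argues by contradiction, showing $\varepsilon_y^A(\mathbb R^n)=1$ for \emph{every} $y\in\Omega$ when $\gamma_A$ fails to exist, whereas you run the contrapositive directly by extracting a single $y\in\Omega$ with $\varepsilon_y^A(\mathbb R^n)<1$ (and you correctly dispose of the bounded case first so that $y\in\overline{A^*}$ is guaranteed).
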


\begin{proof} Assume that (\ref{mass-measure}) holds for some $\mu\in\mathfrak M^+_\Omega$, and suppose, to the contrary, $\gamma_A$ does not exist. Fix $y\in\Omega$ and consider the $J_y$-im\-age $A^*$ of $A$; then, according to Lemma~\ref{l3} with $A$ and $A^*$ reversed, $y$ is inner $\alpha$-reg\-ular for $A^*$. According to Lemma~\ref{reg-com}, this gives
\begin{equation}\label{nuu}U^{\gamma_{A^*}}(y)=1.\end{equation}
(Note that an inner equilibrium measure $\gamma_{A^*}$ exists, $A^*$ being relatively compact.)
On the other hand, for the Kelvin transform $(\gamma_{A^*})^*=\mathcal K_y\gamma_{A^*}$ of $\gamma_{A^*}$, we have according to Lemma~\ref{l2}
\[(\gamma_{A^*})^*=\varepsilon_y^A.\]
Therefore, having applied (\ref{kelv-mmm}) to $\nu=\varepsilon_y^A$, we get in view of (\ref{nuu})
\begin{equation*}\label{1}\varepsilon_y^A(\mathbb R^n)=U^{\gamma_{A^*}}(y)=1.\end{equation*}
Substituting this now into (\ref{L-repr}) (which holds according to Theorem~\ref{th-int-rep}) and applying \cite[Chapter~V, Section~3, Theorem~1]{B2}, we obtain
\[\mu^A(\mathbb R^n)=\int 1\,d\mu^A=\int d\mu(y)\int 1(x)\,d\varepsilon_y^A(x)=\int 1\,d\mu=\mu(\mathbb R^n),\]
which, however, contradicts (\ref{mass-measure}).\end{proof}

For $A$ closed, Theorem~\ref{bal-mass-th} can be reversed.

\begin{theorem}\label{bal-mass-th'}
For\/ $A$ closed, $\gamma_A$ exists if and only if\/ {\rm(\ref{mass-measure})} holds for some\/ $\mu\in\mathfrak M^+_\Omega$. Actually, if\/ $\gamma_A$ exists, then\/ {\rm(\ref{mass-measure})} does hold for every nonzero\/ $\mu\in\mathfrak M^+_{\Omega_\alpha}$, where
\[\Omega_\alpha:=\left\{
\begin{array}{lll} \Omega & \text{if} & \alpha<2,\\ \Delta  & \text{if} & \alpha=2,\\
\end{array} \right.
\]
$\Delta$ being defined in Lemma\/~{\rm\ref{l-comp}}.

\end{theorem}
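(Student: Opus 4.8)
The forward implication is already contained in Theorem~\ref{bal-mass-th} (valid for $A$ arbitrary), so only the converse together with its quantitative refinement requires proof. Throughout I assume $\gamma_A$ exists and, as agreed in Section~\ref{fur-cr}, that $A=\breve{A}$; since $A$ is closed, $\Omega=\overline{A}^{\,c}=A^c$. The plan rests on a single pointwise identity,
\[\varepsilon_y^A(\mathbb R^n)=U^{\gamma_A}(y)\text{ \ for every\ }y\in\Omega,\]
which reduces the whole statement to reading off where $U^{\gamma_A}<1$ from the description in Theorem~\ref{desc-eq}.

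To establish this identity I would compute the mutual energy $E(\varepsilon_y^A,\gamma_A)$ in two ways. Both $\gamma_A$ and $\varepsilon_y^A$ are absolutely continuous and carried by $\overline{A}=A$: for $\gamma_A$ this is Lemma~\ref{l-eq-ar}, and for $\varepsilon_y^A$ (where $y\in\Omega$, so $\varepsilon_y\in\mathfrak M^+_\Omega$) this is Corollary~\ref{C} together with Remark~\ref{bal-support}. Hence any property holding n.e.\ on $A$ holds a.e.\ with respect to either measure. Using $U^{\gamma_A}=1$ n.e.\ on $A$ (see (\ref{eq-ex0})) gives
\[E(\varepsilon_y^A,\gamma_A)=\int U^{\gamma_A}\,d\varepsilon_y^A=\int 1\,d\varepsilon_y^A=\varepsilon_y^A(\mathbb R^n),\]
while using $U^{\varepsilon_y^A}=U^{\varepsilon_y}$ n.e.\ on $A$ (relation (\ref{eq-bala-f1}) via Theorem~\ref{th-bala-2}) gives
\[E(\varepsilon_y^A,\gamma_A)=\int U^{\varepsilon_y^A}\,d\gamma_A=\int U^{\varepsilon_y}\,d\gamma_A=U^{\gamma_A}(y).\]
Comparing the two evaluations yields the claimed identity (all integrals are finite, being $\leqslant1$). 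Note that this route bypasses the Kelvin machinery of Lemma~\ref{l2} entirely, which is convenient since here $\gamma_A$ does exist.

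With the identity in hand, Theorem~\ref{desc-eq} pins down the sublevel set $\{U^{\gamma_A}<1\}$. For $\alpha<2$, (\ref{det2}) gives $U^{\gamma_A}<1$ on all of $A^c=\Omega=\Omega_\alpha$; for $\alpha=2$, (\ref{det4}) gives $U^{\gamma_A}<1$ precisely on $\tilde{A}^c=\Delta=\Omega_\alpha$. In either case $\varepsilon_y^A(\mathbb R^n)=U^{\gamma_A}(y)<1$ for every $y\in\Omega_\alpha$. Finally, for an arbitrary nonzero $\mu\in\mathfrak M^+_{\Omega_\alpha}$ I would invoke the integral representation (\ref{L-repr}), legitimate here by Theorem~\ref{th-int-rep} since $\Omega_\alpha\subset\Omega$, and apply $\mu^A$ to the constant $1$ exactly as in the proof of Theorem~\ref{bal-mass-th} (via \cite[Chapter~V, Section~3, Theorem~1]{B2}) to obtain
\[\mu(\mathbb R^n)-\mu^A(\mathbb R^n)=\int_{\Omega_\alpha}\bigl(1-\varepsilon_y^A(\mathbb R^n)\bigr)\,d\mu(y)=\int_{\Omega_\alpha}\bigl(1-U^{\gamma_A}(y)\bigr)\,d\mu(y)>0,\]
the integrand being strictly positive on $\Omega_\alpha$ and $\mu$ nonzero; this is (\ref{mass-measure}). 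Since $\Omega_\alpha\neq\varnothing$, choosing any such $\mu$ also settles the bare ``only if'' assertion.

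The main obstacle I anticipate is the bookkeeping behind the key identity: one must be sure that $A$ closed makes $\overline{A}=A$ carry both $\gamma_A$ and $\varepsilon_y^A$, and that the absolute continuity of these measures genuinely upgrades the two ``n.e.\ on $A$'' relations to ``a.e.'' statements, so that the two energy evaluations are valid. The remaining point of care is purely organizational: verifying that the set $\Omega_\alpha$ from Lemma~\ref{l-comp} coincides exactly with $\{y\in\Omega:\,U^{\gamma_A}(y)<1\}$ as delivered by Theorem~\ref{desc-eq}, which is what forces the dichotomy $\alpha<2$ versus $\alpha=2$ in the statement.
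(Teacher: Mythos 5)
Your proof is correct, and its core is the same as the paper's: the strict inequality $U^{\gamma_A}<1$ on $\Omega_\alpha$ from Theorem~\ref{desc-eq}, combined with a two-way evaluation of a mutual energy against $\gamma_A$ in which the relations $U^{\gamma_A}=1$ n.e.\ on $A$ and $U^{\,\cdot\,^A}=U^{\,\cdot\,}$ n.e.\ on $A$ are upgraded to a.e.\ statements via absolute continuity. The difference is organizational: you first prove the pointwise identity $\varepsilon_y^A(\mathbb R^n)=U^{\gamma_A}(y)$ for Dirac measures and then integrate over $y$ via the integral representation (\ref{L-repr}), whereas the paper runs the same symmetry argument directly for a general $\mu\in\mathfrak M^+_{\Omega_\alpha}$ in one chain,
\[\mu^A(\mathbb R^n)=\int U^{\gamma_A}\,d\mu^A=\int U^{\mu^A}\,d\gamma_A=\int U^\mu\,d\gamma_A=\int U^{\gamma_A}\,d\mu<\mu(\mathbb R^n),\]
which needs neither Theorem~\ref{th-int-rep} nor the Bourbaki integration theorem for families of measures (it uses only Corollary~\ref{C} for the absolute continuity of $\mu^A$, which does rest on (\ref{L-repr}), but not the representation itself). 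Your detour is heavier but buys the clean by-product $\varepsilon_y^A(\mathbb R^n)=U^{\gamma_A}(y)$ on $\Omega$, obtained here without the Kelvin-transform machinery of Lemma~\ref{l2} that the paper uses for the analogous identity in Theorem~\ref{bal-mass-th}; that is a genuinely nicer derivation of that formula when $\gamma_A$ is known to exist. All the auxiliary points you flag (that $\overline{A}=A$ carries both measures, that $\Omega_\alpha\subset\Omega$ so the representation applies, and that nonzero $\mu$ forces strict inequality) check out.
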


\begin{proof}
In view of Theorem~\ref{bal-mass-th}, it is enough to establish the latter part of the theorem. Assume $\gamma_A$ exists and fix any nonzero $\mu\in\mathfrak M^+_{\Omega_\alpha}$. By (\ref{det2}) and (\ref{det4}),
\begin{equation}\label{strless}
U^{\gamma_A}<1\text{ \ on\ }\Omega_\alpha.
\end{equation}
Since both $\gamma_A$ and $\mu^A$ are absolutely continuous (see Lemma~\ref{l-eq-ar} and Corollary~\ref{C}, respectively) and supported by $A$, we obtain from (\ref{eq-ex0}), (\ref{eq-bala-f1}), and (\ref{strless})
\[\mu^A(\mathbb R^n)=\int U^{\gamma_A}\,d\mu^A=\int U^\mu\,d\gamma_A=\int U^{\gamma_A}\,d\mu<\mu(\Omega_\alpha)=\mu(\mathbb R^n),\]
which is (\ref{mass-measure}).\end{proof}

\begin{example}\label{ex2}Let $n=3$, $\alpha=2$, and let $A$ be a rotation body defined by (\ref{defA}). Then
\[\mu^A(\mathbb R^3)=\mu(\mathbb R^3)\text{ \ for every\ }\mu\in\mathfrak M^+_{A^c},\]
provided that $\varrho$ in (\ref{defA}) is given by (\ref{c1}), and
\[\mu^A(\mathbb R^3)<\mu(\mathbb R^3)\text{ \ for every nonzero\ }\mu\in\mathfrak M^+_{A^c}\]
whenever (\ref{c2}) or (\ref{c3}) holds (see Figure~\ref{Fig1}). This follows by combining Theorems~\ref{bal-mass-th}, \ref{bal-mass-th'} and Corollary~\ref{ineq-en} with Example~\ref{ex-thin}.
\end{example}

\section{Acknowledgement} The author thanks Bent Fuglede for many fruitful discussions on the topic of the paper.

\end{document}